\numberwithin{equation}{section}
\newcommand{\Z}{\mathbb{Z}}
\newcommand{\R}{\mathbb{R}}
\newcommand{\C}{\mathbb{C}}
\definecolor{lilac}{rgb}{0.67, 0.44, 7.96} 
\definecolor{ultramarine}{rgb}{0.17, 0.38, 7.96} 
\definecolor{iris}{rgb}{0.47, 0.08, 5.96} 
\definecolor{limegreen}{rgb}{0.5, 0.7, 0.3}
\begin{document}

\title*{Symmetry groups of origami structures}
\author{Sara Chari and Andrew Quinn Macauley}
\institute{Sara Chari \at St. Mary's College of Maryland\\ 18952 E. Fisher Rd.\\ St. Mary's City, MD 20686, \email{slchari@smcm.edu}
\and Andrew Quinn Macauley \at Name, Address of Institute \email{quinnmacauley@gmail.com}}
%
%
\maketitle

\abstract*{Origami is the art of folding paper into various patterns without cutting or tearing the paper. By viewing the complex plane as an infinite piece of paper and the angles as folds, we iteratively compute and record all intersection points of allowed lines to construct mathematical origami sets. Additionally, we include the various lines to create a repeating pattern that can be viewed as a wallpaper group if the angle set contains 3 or fewer angles. There are 17 wallpaper groups up to isomorphism, so we determine which such groups can be constructed in this way, depending on the rotational and reflectional symmetries present in the given pattern. If the angle set contains more than three angles, the resulting pattern will be dense and hence no longer a wallpaper group. In this case, the classification of the symmetry group is done algorithmically.}

\section{Introduction}
Mathematical origami is an algebraic interpretation of traditional origami in which one folds a paper without cutting or tearing it. To obtain a mathematical origami construction, one may view the complex plane as an infinite piece of paper, and draw lines in the plane that correspond to ``folds". The construction begins with two points called ``seed points" (typically 0 and 1) and a set $U$ of allowed angles. For any two points $p,q \in \C$ and any two angles $\alpha, \beta \in U$, the intersection of the line through point $p$ at an angle of $\alpha$ with a line through point $q$ at an angle $\beta$ is denoted by $[\![p,q]\!]_{\alpha, \beta}$, as shown in Figure (\ref{intersection_viz}) below.
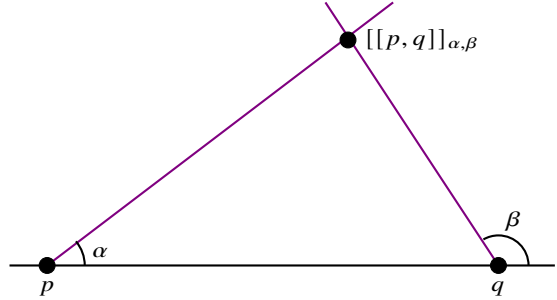
\begin{figure}[h!t]
\centering
\begin{tikzpicture}

\coordinate (A) at (0,0);
\coordinate (B) at (6,0);
\coordinate (P) at (0,-0.1);
\coordinate (Q) at (6,-0.1);
\coordinate (R) at (6.75,0);
\coordinate (Mid) at (4,3);
\coordinate (PreA) at (-0.5, 0);
\coordinate (LMid) at (3.7, 3.5);
\coordinate (RMid) at (4.6, 3.5);

\draw[black, thick] (PreA) -- (R);
\draw[violet, thick] (RMid) -- (A);
\draw[violet, thick] (LMid) -- (B);

\filldraw[black] (A) circle (3pt) node[below] {};
\filldraw[black] (B) circle (3pt) node[below] {};
\filldraw[black] (Mid) circle (3pt) node[right] {\phantom{s}$[[p,q]]_{\alpha,\beta}$};

\draw[black, thick] (A) ++(0.5,0) arc[start angle=0,end angle=40,radius=0.5] node[midway, right] {$\alpha$};
\draw[black, thick] (B) ++(0.4,0) arc[start angle=0,end angle=120,radius=0.4] node[above right] {\phantom{s} $\beta$};

\node at (P) [below] {$p$}; 
\node at (Q) [below] {$q$}; 

\end{tikzpicture}
\label{intersection_viz}
\caption{The intersection of $[\![p,q]\!]_{\alpha,\beta}$.}
\end{figure}

The two lines, denoted by $L_{p,\alpha}$ and $L_{q,\beta}$, are of the form $p+r(\cos(\alpha)+i\sin(\alpha))$ and $q+s(\cos(\beta)+i\sin(\beta))$ respectively, for $r,s \in \R$. The intersection of these two lines can be found by solving the equation
    
\begin{equation} \label{intformula}
p+r(\cos(\alpha)+i\sin(\alpha))=q+s(\cos(\beta)+i\sin(\beta))
\end{equation}
for $r$ and $s$ and substituting their values back into either side of the equation. The process of constructing an origami set is iterative and goes as follows. 

\begin{definition}
    Let $U$ be a set of angles. Define $$M_1(U) \colonequals \{[\![0,1]\!]_{\alpha,\beta}: \alpha, \beta \in U, \alpha \neq \beta\}.$$ For each integer $k >1$, define $$M_k(U)\colonequals\{[\![p,q]\!]_{\alpha,\beta}: p,q \in M_{k-1}(U), \alpha,\beta \in U\}.$$

    The \textbf{origami set} is defined as $$M(U) \colonequals \bigcup_k M_k(U).$$

    Then, the \textbf{origami structure} is defined as $$S(U) \colonequals  \bigcup_{\alpha \in U}  \bigcup_{p \in M(U)} L_{p,\alpha}.$$
   
\end{definition}
In other words, the origami set is the set of all points obtained via intersections of lines at allowed angles through points already obtained in this way, and the origami structure includes all lines at allowed angles through all points in the origami set. Throughout this paper, it is assumed that the angle set $U$ is finite so that there are a finite number of points obtained in each iteration. It is also assumed that $U$ contains the angle 0.


Depending on the angle set $U$, several known conditions will determine whether the resulting set $M(U)$ will be a lattice, a ring, dense, or some combination of these. The set of symmetries of an origami structure forms a group, and classifying such groups is the main focus of this paper. The study of symmetry patterns has deep ties to crystallography (in higher dimensions), chemistry (including molecular symmetry), and other natural objects. Combining the subjects of origami constructions and wallpaper groups can allow for the identification of various symmetry patterns, including fractals, which has applications in image processing, artificial intelligence/machine learning, and solar panel configuration.

Before proceeding to construct origami sets, we will begin with a brief overview of wallpaper groups and their properties. 
\begin{definition}
    A wallpaper group is the group of symmetries of an infinite repeating pattern in 2-dimensional space such that the set of images of a given point under the translation subgroup is a 2-dimensional lattice.
\end{definition}

There are 17 wallpaper groups up to isomorphism, and based on their symmetries, they are unique in pattern and characteristics. 
The types of symmetries include reflections across a given axis, rotations about a given point, glide reflections, translations, and any combination of these. Given an angle set $U$ and the corresponding origami set $M(U)$, we define $G(U)$ to be the wallpaper group corresponding to the given origami structure. It is often useful to consider the \emph{point group}, $P(U)$, which is the wallpaper group modulo the subgroup of translations.

When classifying each wallpaper group, the numbers 1,2,3,4, and 6 represent the highest rotational order within the subgroup of rotations modulo $2\pi$. Each wallpaper pattern is made up of a cell-based lattice structure, and the shape of the cells helps determine the symmetries of the wallpaper group. These cells can be one of five different types of polygons: parallelograms, rectangles, rhombi, squares, and hexagons \cite{Armstrong}. 

The letters $p$, $c$, $m$, and $g$ are used to represent different characteristics of the corresponding symmetries. The letter $p$ is used to classify groups with a \emph{primitive} lattice structure, where no cell within the lattice contains a lattice point; this occurs in the classification of all but two of the 17 different patterns. If $p$ is not used, the letter $c$ is used to denote a lattice with a \emph{centered} cell structure in which each cell includes a lattice point at its center; of the 17 wallpaper groups, two are classified with a \textit{c}. The letter $m$ is used to denote reflections, or \emph{mirrors}, within the lattice. Similarly, $g$ is used to denote \emph{glide-reflections}, which are the combination of a translation and a reflection or vice versa.  
\begin{figure}[h!t]
    \centering
    \includegraphics[width=4in]{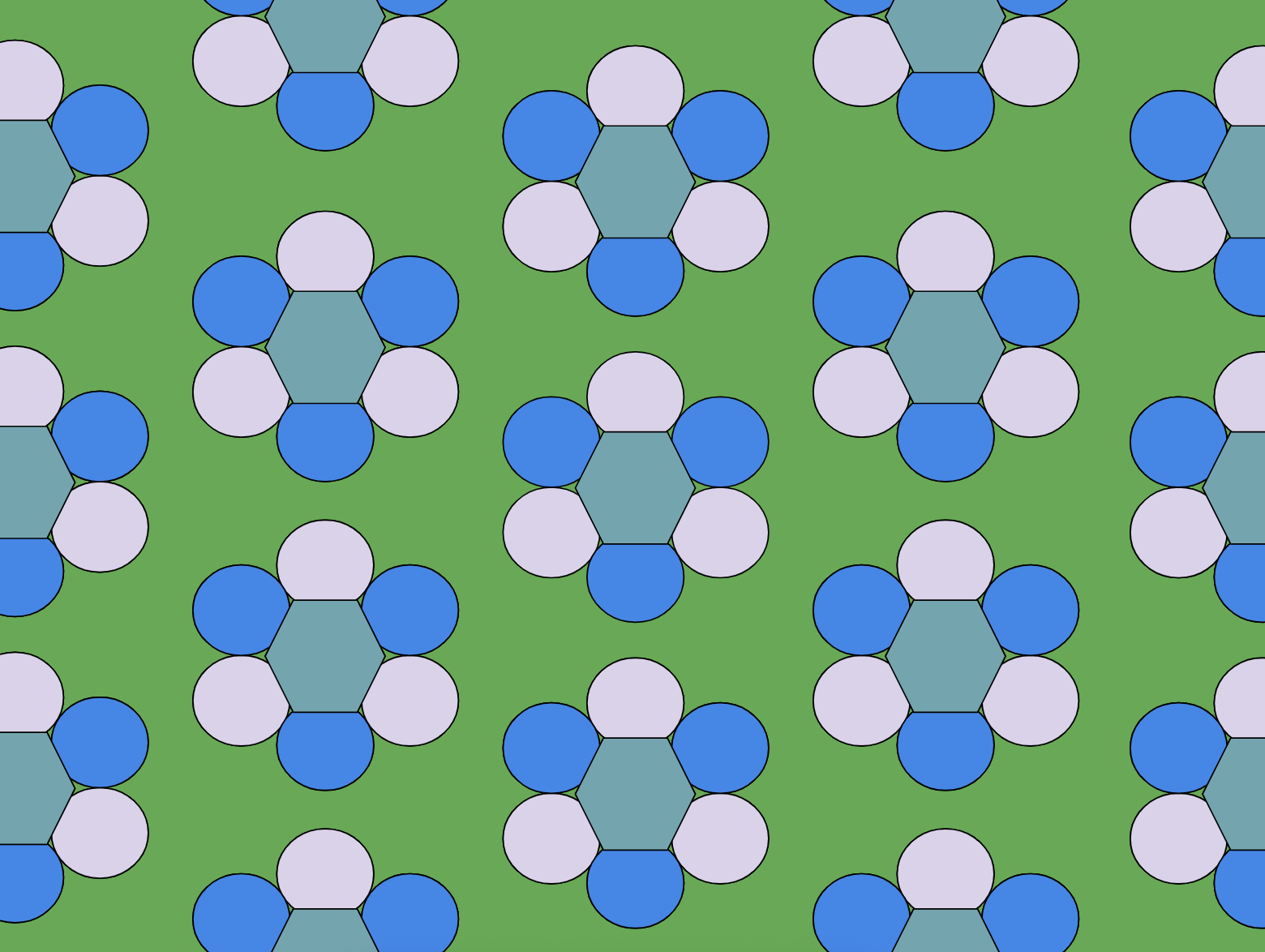}
    \caption{A wallpaper group characterized as p31m.}
    \label{fig:intro-3}
\end{figure}

When the angle set $U$ for an origami structure $S(U)$ contains exactly three angles, $S(U)$ is a wallpaper group and falls into one of 17 different groups up to isomorphism. However, of the 17 possible wallpaper groups, only 3 can be constructed via mathematical origami. 

In Section 2, we describe known results about origami constructions that will be useful for the main results. In Sections 3 and 4, we will describe the criteria for an origami structure to contain certain rotations and reflections, respectively. Section 5 includes the main results about the possible wallpaper groups formed in the case where the origami set is a lattice, and Section 6 describes the possible structures of the symmetry groups if $U$ contains more than 3 angles, in which case the structure is dense and is no longer a wallpaper group. Origami constructions are computational by nature, which allows for algorithms to output a set of points that generate an origami set. 

\section{Computing intersections and projections}

 We now discuss previously known results about origami constructions and the properties of the corresponding origami sets. The first factor is the number of allowed angles. The following theorem allows for a quick way to check if the set is dense.
 
 \begin{theorem} \cite{BBDLG}

Let $U$ be a set of angles where $|U|>3$. Then, $M(U)$ is dense in $\C$.    

  \end{theorem}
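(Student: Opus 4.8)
The plan is to reduce the statement to a question about an additive subgroup of $\R^2 \cong \C$ and then to show that four distinct directions force that subgroup to be non-discrete. First I would make the intersection operation explicit: solving equation \eqref{intformula} for the moving parameters and substituting back gives
\[
[\![p,q]\!]_{\alpha,\beta} = p + e^{i\alpha}\,\frac{\mathrm{Im}\!\left(e^{-i\beta}(q-p)\right)}{\sin(\alpha-\beta)}.
\]
Thus each newly constructed point is an $\R$-affine combination of $p$, $q$ and their conjugates whose coefficients are ratios of the sines $\sin(\alpha-\beta)$ for $\alpha,\beta\in U$. These sine-ratios are the quantities that govern the entire construction, and they are where the hypothesis $|U|>3$ will ultimately enter.

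Second, I would use the seed points $0,1$, the fact that $0\in U$, and the standard parallelogram and reflection constructions to record that $M(U)$ is closed under negation and addition, i.e.\ it is an additive subgroup of $\C$; this is exactly the ring-theoretic structure established in \cite{BBDLG}, which I would quote rather than reprove. Granting this, density becomes a clean dichotomy: the closure of a subgroup of $\R^2$ is a closed subgroup, hence of the form $V\oplus\Lambda$ for a linear subspace $V$ and a discrete complement $\Lambda$, so $M(U)$ is dense in $\C$ as soon as I can produce arbitrarily short nonzero elements in two transverse directions (forcing $\dim V=2$). It therefore suffices to manufacture accumulation at $0$ along two independent lines.

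Third — the engine that produces accumulation — I would build scalings by projection. Taking the real axis (direction $0$) as a ruler and a second line through $0$ at angle $\gamma$, the composite ``project out along angle $\beta$, project back along angle $\beta'$'' sends a real point $x$ to $\rho\,x$, where $\rho=\frac{\sin\beta}{\sin(\beta-\gamma)}\cdot\frac{\sin(\beta'-\gamma)}{\sin\beta'}$ is a fixed real multiplier assembled from the angles used, and $\beta=\beta'$ yields $\rho=1$. Every intermediate intersection lies in $M(U)$, so $M(U)\cap\R$ is closed under $x\mapsto\rho x$ and $x\mapsto\rho^{-1}x$ and hence contains the progression $\{\rho^n\}$. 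With only three angles the two axis directions $0,\gamma$ and the single remaining transversal force the return to use that same transversal, so $\rho=1$; this is precisely why three angles yield a discrete lattice. A fourth angle supplies a genuinely different return transversal, producing a multiplier $\rho\neq 1$. Since then $\rho^n\to 0$, the subgroup $M(U)$ contains arbitrarily short nonzero vectors along the real axis, and repeating the construction with the roles of the axes exchanged gives the same accumulation transverse to the real axis; two independent accumulation directions make the closure all of $\C$.

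The main obstacle is the arithmetic buried in the transition from three to four angles: I must verify that \emph{no} four-element angle set can conspire to make every constructible multiplier equal to $1$ — equivalently, that the governing ratios of sines cannot all degenerate to $1$, and ideally that two of them are multiplicatively independent so that $\{\rho^m(\rho')^n\}$ is even dense in $\R_{>0}$. Excluding these degenerate sine-identities uniformly over all $4$-angle sets, rather than merely for a generic one, is the crux of the theorem and the only place where $|U|=3$ and $|U|\ge 4$ truly part ways; the remainder is the bookkeeping needed to keep each auxiliary projection point inside $M(U)$ at every stage.
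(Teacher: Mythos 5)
First, a point of comparison: the paper does not prove this statement at all --- it is imported from \cite{BBDLG} as a black box --- so there is no in-paper argument to measure yours against. Judged on its own terms, your skeleton is sound: $M(U)$ is an additive subgroup of $\C$ (for arbitrary angle sets this is cleaner to get from Theorem \ref{BRthm} of Bahr--Roth, which exhibits $M(U)$ as an $R$-module, than from \cite{BBDLG}, whose ring theorem assumes the angle set is a group), closed subgroups of $\R^2$ split as $V\oplus\Lambda$, and your two-step projection composite does multiply $M(U)\cap\R$ by the constant $\rho=\frac{\sin\beta\,\sin(\beta'-\gamma)}{\sin(\beta-\gamma)\,\sin\beta'}$. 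The genuine gap is that you stop exactly at the step you yourself call the crux: you never verify that some constructible multiplier is an honest contraction. Moreover the one inference you do commit to --- ``a fourth angle produces a multiplier $\rho\neq 1$, since then $\rho^n\to 0$'' --- is invalid as written: $\rho$ is a ratio of sines whose sign you have not controlled, and $\rho=-1$ can genuinely occur (take $U=\{0,\tfrac{\pi}{2},\beta,\pi-\beta\}$ with ruler the real axis, auxiliary axis $\tfrac{\pi}{2}$, out along $\beta$ and back along $\pi-\beta$), in which case $\{\rho^n\}$ is just $\{\pm 1\}$ and nothing accumulates. What you need is $\lvert\rho\rvert\neq 1$, not $\rho\neq 1$.

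The good news is that the crux is not the delicate ``conspiracy of sine identities'' you fear; it is a two-line computation plus a choice of labeling, and you should carry it out. Product-to-sum identities give
\begin{equation*}
\sin\beta\,\sin(\beta'-\gamma)-\sin(\beta-\gamma)\,\sin\beta'=-\sin(\beta-\beta')\,\sin(\gamma),
\end{equation*}
so $\rho=1$ exactly when $\beta\equiv\beta'$ or $\gamma\equiv 0\pmod{\pi}$; four distinct directions rule out both. For the sign, represent the three nonzero directions of $U$ in $(0,\pi)$ and name them so that $\gamma<\beta<\beta'$: then $\sin\beta$, $\sin\beta'$, $\sin(\beta-\gamma)$, $\sin(\beta'-\gamma)$ are all positive, hence $\rho>0$ and $\rho\neq 1$, so $\min(\rho,\rho^{-1})\in(0,1)$ and iterating the composite (or its reverse) gives arbitrarily short nonzero reals in $M(U)$; the symmetric composite with the $\gamma$-line as ruler gives the transverse accumulation, and your closed-subgroup dichotomy finishes the proof. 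With that paragraph added your argument is complete; without it, the proof has a hole precisely where the hypothesis $\lvert U\rvert>3$ is supposed to do its work.
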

 

When an angle set $U$ contains exactly three angles, the origami set $M(U)$ is a lattice rather than dense, and the lattice structure can be computed using the following theorem.

\begin{theorem} \cite{Nedrenco}
 Given a set $U$ of three angles and two seed points 0 and 1, the origami set $M(U)$ is given by $\Z+\Z \tau,$ where $\tau=[\![0,1]\!]_{\alpha,\beta}$.
\end{theorem}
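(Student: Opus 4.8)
The plan is to prove the two inclusions $M(U)\subseteq\Z+\Z\tau$ and $\Z+\Z\tau\subseteq M(U)$ separately, after a change of coordinates that makes the three allowed directions transparent. Since the angle set contains $0$, I write $U=\{0,\alpha,\beta\}$ and $\tau=[\![0,1]\!]_{\alpha,\beta}$, assuming the three angles are pairwise distinct modulo $\pi$ so that the relevant lines actually meet and $\{1,\tau\}$ is $\R$-linearly independent, hence a basis for the lattice. I would coordinatize the plane by this basis, writing each point as $x\cdot 1+y\cdot\tau$ with $x,y\in\R$; then a point lies in $\Z+\Z\tau$ precisely when $x,y\in\Z$. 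The crucial preliminary is to check, straight from the definition of $\tau$, that the vector $\tau$ points in direction $\alpha$ while $\tau-1$ points in direction $\beta$, so that in these coordinates the three allowed directions are exactly the three line families $y=c$ (direction $0$), $x=c$ (direction $\alpha$), and $x+y=c$ (direction $\beta$).

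For the inclusion $M(U)\subseteq\Z+\Z\tau$ I would argue by induction on the iteration index $k$. A line in one of the three families passes through a lattice point if and only if its constant $c$ is an integer, and a direct check shows that the intersection of any two lines from two different families with integer constants again has integer coordinates (for example $\{x=c\}\cap\{x+y=c'\}=(c,c'-c)$). Hence $\Z+\Z\tau$ is closed under the intersection operation $[\![\,\cdot\,,\cdot\,]\!]_{\gamma,\delta}$ for $\gamma,\delta\in U$. Since $0,1\in\Z+\Z\tau$, computing $M_1(U)$ in these coordinates gives $M_1(U)=\{0,1,\tau,1-\tau\}\subseteq\Z+\Z\tau$, and the closure property then propagates the inclusion from $M_{k-1}(U)$ to $M_k(U)$ (parallel same-angle lines contribute no new point), so $M(U)\subseteq\Z+\Z\tau$.

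For the reverse inclusion I would track which integers occur as coordinates of already-constructed points: let $X$, $Y$, $D$ denote the sets of values taken by $x$, $y$, and $x+y$ over $M(U)$. Intersecting an available direction-$\alpha$ line with an available direction-$0$ line shows that whenever $m\in X$ and $n\in Y$ the point $(m,n)$ is constructible, whence $D\supseteq X+Y$; intersecting the other pairs of families gives likewise $Y\supseteq D-X$ and $X\supseteq D-Y$. From $M_1(U)$ one reads off $X\supseteq\{0,1\}$, $Y\supseteq\{-1,0,1\}$, $D\supseteq\{0,1\}$, and since the sumset and difference set of two runs of consecutive integers are again runs of consecutive integers, these three sets expand in both directions and eventually exhaust $\Z$. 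Once $X=Y=\Z$, every $(m,n)$ is the intersection of the available lines $x=m$ and $y=n$, so $\Z+\Z\tau\subseteq M(U)$ and the two inclusions combine to give equality.

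The main conceptual step is the change of coordinates in the first paragraph: once the three allowed directions are recognized as the three line families of the triangular grid, the forward closure is an immediate calculation. The part requiring the most care is the growth argument for the reverse inclusion, where I must verify that the coordinate-value sets $X,Y,D$ genuinely expand to all of $\Z$ rather than stabilizing on a proper subset; organizing this as an induction showing that each of $X,Y,D$ eventually contains $\{-N,\dots,N\}$ for every $N$ keeps the bookkeeping honest.
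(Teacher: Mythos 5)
The paper does not actually prove this statement---it is quoted from \cite{Nedrenco} and used as a black box---so there is no internal proof to compare against. Judged on its own, your argument is correct and complete. The change of coordinates is the right key idea: since $\tau$ lies on $L_{0,\alpha}$ and on $L_{1,\beta}$, the vectors $\tau$ and $\tau-1$ do point in directions $\alpha$ and $\beta$, and (because the three angles are distinct modulo $\pi$) $\{1,\tau\}$ is an $\R$-basis, so the three allowed families really do become $y=c$, $x=c$, $x+y=c$. The forward inclusion is then the immediate closure check you describe, with $M_1(U)=\{0,1,\tau,1-\tau\}$ as the base case. The reverse inclusion is where the work is, and your bookkeeping with $X$, $Y$, $D$ and the relations $D\supseteq X+Y$, $Y\supseteq D-X$, $X\supseteq D-Y$ is sound: starting from $X\supseteq\{0,1\}$, $Y\supseteq\{-1,0,1\}$, $D\supseteq\{0,1\}$, each set is always an integer interval containing $\{0,1\}$, and sumsets/difference sets of such intervals strictly lengthen in both directions, so all three sets exhaust $\Z$ and every $(m,n)$ arises as $\{x=m\}\cap\{y=n\}$. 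This is more elementary than the module-theoretic route the paper leans on elsewhere (Theorem~\ref{BRthm}, where $M(U)$ is the $R$-module generated by the initial intersections and the lattice case follows because the projection ring degenerates to $\Z$); your version buys a self-contained combinatorial proof at the cost of the explicit interval-growth induction, which you correctly flag as the step needing care.
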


The value of $\tau$ can be calculated using the intersection formula given in Equation \ref{intformula}.
To calculate $r$ and $s$, Equation \ref{intformula} must be viewed as two equations by separating it into two parts: the real part and the imaginary part. Writing $p = p_r + ip_i$ and $q = q_r + iq_i$ where $p_r$ and $q_r$ are the real parts of $p$ and $q$ respectively, and $p_i$ and $q_i$ are the imaginary parts yields 
$$p_r + r \cdot \cos (\alpha) = q_r + s \cdot \cos(\beta) \; \; \textrm{and} \; \; p_i + r \cdot \sin (\alpha) = q_i + s \cdot \sin(\beta).$$
Then, rearranging the equations yields  
$$r \cdot \cos (\alpha) - s \cdot \cos(\beta) = q_r - p_r  \; \; \textrm{and} \; \; r \cdot \sin (\alpha) - s \cdot \sin(\beta) = q_i - p_i.$$
Thus, the following linear system is obtained
\begin{equation}\label{linear tau}
    \begin{pmatrix}
    \cos (\alpha) & - \cos(\beta) \\
    \sin (\alpha) & -\sin(\beta)
\end{pmatrix} 
\begin{pmatrix}
    r \\ s
\end{pmatrix} = 
\begin{pmatrix}
    q_r - p_r \\ q_i - p_i
\end{pmatrix},
\end{equation}
and can be solved for $r$ and $s$ to find $\tau=p+r(\cos(\alpha)+i\sin(\alpha))$. 

It is important to recognize how points and vectors in $\R^2$ can be represented on the complex plane and vice versa. There is a frequent switch between the two spaces throughout this paper as needed.

We will make use of the following definitions and theorem due to Barr and Roth \cite{BR} in the later sections of this paper.
\begin{definition} \label{projint}
    Given a set $U$ of angles, the set $S$ of \textbf{initial intersections} is defined as $$S=\{[\![0,1]\!]_{\alpha,\beta}: \alpha,\beta \in U\}$$ and the set $P$ of \textbf{projections} onto the real line is defined as $$P=\{[\![0,q]\!]_{0,\alpha}: q \in S, \alpha \in U\}.$$
\end{definition}
\begin{theorem} \cite{BR} \label{BRthm}
Let $U$ be a set of angles, let $S=\{s_1, s_2, \dots, s_m\}$ be the set of initial intersections, and let $P$ be the set of projections. Define $R$ to be the ring $R=\Z[P]$. Then, $M(U)$ is the $R$-module generated by $S$; i.e., $$M(U)=Rs_1+Rs_2+\cdots +Rs_m.$$    
\end{theorem}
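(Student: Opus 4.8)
The plan is to prove the two inclusions $Rs_1+\cdots+Rs_m \subseteq M(U)$ and $M(U)\subseteq Rs_1+\cdots+Rs_m$ separately, with the intersection formula as the computational engine for both. Solving the linear system in Equation \ref{linear tau} by Cramer's rule (or, equivalently, writing $u=\cos\alpha+i\sin\alpha$, $v=\cos\beta+i\sin\beta$ and introducing the scalar cross product $a\times b \colonequals \mathrm{Im}(\bar a b)$ for $a,b\in\C$) gives the closed form
\[
[\![p,q]\!]_{\alpha,\beta}=\frac{(q\times v)\,u-(p\times u)\,v}{u\times v}.
\]
The single most important structural feature of this formula is that it is \emph{real-linear}, separately, in $p$ and in $q$. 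I would record this as a lemma at the outset, together with the easy facts that the seed points $0,1$ lie in $S$ and that $1\in P$ (since $[\![0,1]\!]_{0,\beta}=1$), so that $R=\Z[P]$ is a unital subring of $\R$.

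For the forward inclusion $M(U)\subseteq Rs_1+\cdots+Rs_m$ I would induct on the iteration index $k$. The base case $M_1(U)\subseteq S$ is immediate, since each element is an $s_i=1\cdot s_i$ with $1\in R$. For the step, suppose $p=\sum_i a_i s_i$ and $q=\sum_i b_i s_i$ with $a_i,b_i\in R\subseteq\R$. Because the coefficients are real, conjugation acts trivially on them and $p\times u=\sum_i a_i\,(s_i\times u)$, $q\times v=\sum_i b_i\,(s_i\times v)$; substituting into the formula and recognizing the one-sided intersections yields
\[
[\![p,q]\!]_{\alpha,\beta}=\sum_i b_i\,[\![0,s_i]\!]_{\alpha,\beta}+\sum_i a_i\,[\![0,s_i]\!]_{\beta,\alpha}.
\]
This reduces the whole step to a single claim: each one-sided intersection $[\![0,s]\!]_{\alpha,\beta}$ with $s\in S$ lies in the module. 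Such a point is a real multiple of the direction $u$, and since some $s_j=[\![0,1]\!]_{\alpha,\delta}$ also lies on the line through the origin in direction $u$, it suffices to show that the ratio is an element of $R$; concretely, resolving $[\![0,s]\!]_{\alpha,\beta}$ onto the real axis (angle $0\in U$) produces exactly one of the defining projections in $P$, giving $[\![0,s]\!]_{\alpha,\beta}\in Rs_j$. This is the step that forces the coefficient ring to be precisely $R=\Z[P]$.

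For the reverse inclusion it suffices to check that $M(U)$ contains $S$ and is closed under the two module operations: addition, and multiplication by each projection $\pi\in P$ (closure under products and $\Z$-combinations then delivers the full action of $R=\Z[P]$, while negation comes from subtraction). Containment of $S$ is clear. The remaining content is \emph{origami arithmetic}: I would establish as separate lemmas that for $z,w\in M(U)$ there is a finite sequence of admissible intersections producing $z+w$, and likewise one producing $\pi z$ for $\pi\in P$. Here the availability of the seed points and of the angle $0\in U$ is essential, as it keeps the real axis available as a reference line along which intercept and parallel-transport constructions realize both addition and scaling; transporting these along the allowed directions $u$ then yields the operations in $\C$, and every intermediate point stays in $M(U)$ by construction.

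The main obstacle is the bookkeeping in the forward inclusion: verifying that the real scalar relating $[\![0,s]\!]_{\alpha,\beta}$ to a basis point $s_j$ is honestly a ring element of $R=\Z[P]$, and not merely some real number. This is exactly where one must use that \emph{every} angle in $U$ (in particular $0$) is available, so that an arbitrary intersection can be resolved onto the real axis through projections, and where the multiplicative closure of $R$ (a ring, not just a group) is genuinely needed, since iterating intersections composes these projection scalars. The real-linearity lemma is what prevents the conjugates from obstructing the reduction, and I expect the careful identification of the resulting real coefficients with $\Z$-polynomials in the projections to be the most delicate part of the argument.
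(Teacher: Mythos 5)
The paper does not actually prove this statement: Theorem \ref{BRthm} is quoted from Bahr and Roth \cite{BR} as background, so there is no internal proof to compare against. Your outline follows what is essentially the cited source's strategy --- exploit the fact that the intersection formula is real-linear in $p$ and in $q$ separately, induct on the iteration index, and reduce everything to one-sided intersections $[\![0,s]\!]_{\alpha,\beta}$ with $s\in S$ --- and the closed form and the expansion $[\![p,q]\!]_{\alpha,\beta}=\sum_i b_i[\![0,s_i]\!]_{\alpha,\beta}+\sum_i a_i[\![0,s_i]\!]_{\beta,\alpha}$ are both correct. Two points need repair, one small and one substantive.

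The small one is the identification of the scalar in the key claim. You compare $[\![0,s]\!]_{\alpha,\beta}$ to a generator $s_j=[\![0,1]\!]_{\alpha,\delta}$ for some auxiliary $\delta$ and then say that ``resolving $[\![0,s]\!]_{\alpha,\beta}$ onto the real axis'' produces an element of $P$. Projecting the new intersection point does not land in $P$: by Definition \ref{projint}, $P$ consists only of projections of points of $S$, and $[\![0,s]\!]_{\alpha,\beta}$ is generally not in $S$. The correct maneuver is to project $s$ itself: since $[\![0,q]\!]_{\alpha,\beta}$ depends on $q$ only through the line $L_{q,\beta}$, replace $s$ by the point where that line meets the real axis, namely $\lambda=[\![0,s]\!]_{0,\beta}\in P$, and then use real homogeneity to get $[\![0,s]\!]_{\alpha,\beta}=[\![0,\lambda]\!]_{\alpha,\beta}=\lambda\,[\![0,1]\!]_{\alpha,\beta}$. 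This exhibits the scalar as literally an element of $P$ and identifies the generator as $[\![0,1]\!]_{\alpha,\beta}$ (same $\beta$, not an arbitrary $\delta$); with your choice of $s_j$ the ratio is a quotient of sines that is not visibly in $\Z[P]$. The substantive gap is the reverse inclusion: closure of $M(U)$ under addition (and subtraction) is exactly the nontrivial ``origami arithmetic'' of \cite{BBDLG} --- a parallelogram construction that genuinely requires $0\in U$ and $|U|\ge 3$ --- and you assert it as a lemma without giving the construction. Multiplication by a projection, by contrast, does follow cheaply from the same homogeneity identity above, so once additive closure is supplied your plan closes up.
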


The table below contains an algorithm that calculates the set of initial intersections using Equation (\ref{linear tau}) for a given list of possible angles and seed points by applying the linear algebra highlighted above. This algorithm was developed into a MATLAB script that was used to verify calculations. 

\begin{table}[h!t]
    \centering
    \begin{tabular}{|p{0.5cm} p{12cm}|}
    \hline
    \multicolumn{2}{|c|}{\textbf{Algorithm 1}} \\
    \hline
    1 & \underline{Input:} A vector $V$ of $m$ angles and seed points $p$ and $q$.\\
    2 & \underline{Output:} A set $S$ that contains all unique  intersection points using angles from $V$ and points $p$ and $q$. \\
    3 & $S = \emptyset$\\
    4 & \textbf{for} $k = 1, 2, \dots, m:$\\
    5 & \; $\big|$ \ \textbf{for} $j = k + 1, \dots, m$:\\
    6 & \; $\big|$ \; \ $\big|$ \; $\alpha = V(k), \; \beta = V(j)$\\
    7 & \; $\Bigg|$ \; \ $\Bigg|$ \; $V = \begin{pmatrix}
        \cos(\alpha) & -\cos(\beta) \\ \sin(\alpha) & -\sin(\beta)
    \end{pmatrix}$ \\
    8 & \; $\Bigg|$ \; \ $\Bigg|$ \; $a = \begin{pmatrix}
        \text{Re}(q) - \text{Re}(p) \\ \text{Im}(q) - \text{Im}(p)
    \end{pmatrix}$ \\
    9 & \; $\big|$ \; \ $\big|$ \; $b = V \backslash a$ \\
    10 & \; $\Bigg|$ \; \ $\Bigg|$ \; $u = \begin{pmatrix}
        \cos(\alpha) & 0 \\ \sin(\alpha) & 0
    \end{pmatrix} \cdot b + \begin{pmatrix}
        \text{Re}(p) \\ \text{Im}(p)
    \end{pmatrix}$ \\
    11 & \; $\big|$ \; \ $\big|$ \; $\tau  = \begin{pmatrix}
        1 & i
    \end{pmatrix} \cdot u$ \\
    12 & \; $\big|$ \; \ $\big|$ \; $S  = S \cup \tau$ \\
    13 & \; $\big|$ \ \textbf{end} \\
    14 & \textbf{end} \\
    15 & \textbf{Return} $S$ \\
    \hline
    \end{tabular}
    \label{}
\end{table}


Below is another algorithm that solves Equation (\ref{intformula}); however, in this case, $p = 0$ and $\alpha = 0$. The algorithm is solving the equation $r = q + s\cdot(\cos(\beta) + i\cdot\sin(\beta))$ for a list of possible values of angles $\beta$ in $U$ and points $q$ in $S$ in the same way as in Algorithm 1, but this time the output is the set of projections. This is used to compute the projection of points onto the real axis via the set of allowed angles. These projections are used for determining the generators for the base ring of an origami set. 
\pagebreak

\begin{table}[h!t]
    \centering
    \begin{tabular}{|p{0.5cm} p{12cm}|}
    \hline
    \multicolumn{2}{|c|}{\text{\textbf{Algorithm 2}}} \\
    \hline
    1 & \underline{Input:} A vector $S$ of $n$ initial intersections and a vector $V$ of $m$ angles.\\
    2 & \underline{Output:} A set $P$ that contains all unique real projections using initial intersections from $S$ and angles from $V$. \\
    3 & $V' = V - \{0\}$ \\
    4 & $l = |V'|$ \\
    5 & $P = \emptyset$ \\
    6 & \textbf{for} $k = 1, 2, \dots, l:$\\
    7 & \; $\big|$ \ $c = V'(k)$\\
    8 & \; $\big|$ \ \textbf{for} $j = 1, \dots, n$:\\
    9 & \; $\big|$ \; \ $\big|$ \; $\beta = S(j)$ \\
    10 & \; $\Bigg|$ \; \ $\Bigg|$ \; $T = \begin{pmatrix}
        1 & -\cos(\beta) \\ 0 & -\sin(\beta)
    \end{pmatrix}$ \\
    11 & \; $\Bigg|$ \; \ $\Bigg|$ \; $a = \begin{pmatrix}
        \text{Re}(c)\\ \text{Im}(c)
    \end{pmatrix}$ \\
    12 & \; $\big|$ \; \ $\big|$ \; $b = T \backslash a$ \\
    13 & \; $\big|$ \; \ $\big|$ \; $\tau  = \begin{pmatrix}
        1 & 0
    \end{pmatrix} \cdot b $\\
    14 & \; $\big|$ \; \ $\big|$ \; $P = P \cup \tau$\\
    15 & \; $\big|$ \ \textbf{end}\\
    16 & \textbf{end}\\
    17 & \textbf{Return} $P$ \\
    \hline
    \end{tabular}
    \label{tab:my_label}
\end{table}

These algorithms are used in tandem to compute $M(U)$, a process that was first outlined by Bahr and Roth \cite{BR}.
The MATLAB code for each algorithm can be found in a GitHub repository using the following link:
\url{https://github.com/quinnmacauley20/Wallpaper-Groups-in-Origami-Structures}. Algorithm 1 corresponds to the code found in \textit{tau.m} and Algorithm 2 corresponds to the code found in \textit{projection.m}.

Below is a code written using SageMath which also inputs and angle set $U$ and outputs the sets $S$ and $P$ described in Definition \ref{projint}
\begin{sageblock}
def proj(U):
    S=[]
    P=[]
    p=matrix(SR,[[0],[0]])
    q=matrix(SR,[[1],[0]])
    for alpha in U:
        for beta in U:
            if alpha<beta:
                M1=matrix(SR,[[cos(alpha),0],[sin(alpha),0]])
                M2=matrix(SR,[[cos(alpha),-cos(beta)],[sin(alpha),-sin(beta)]])
                rs=(M2.inverse())*(q-p)
                tau=(M1*rs)
                if tau not in S:
                    S.append(tau)
    print(S)
    for tau in S:
        for beta in U:
            if beta!=0:
                M1=matrix(SR,[1,0])
                M2=matrix(SR,[[1,-cos(beta)],[0,-sin(beta)]])
                rs=(M2.inverse())*(tau)
                r=M1*rs
                if r[0] not in P:
                     P.append(r[0])
    print(P)
\end{sageblock}

\section{Rotations}

We now explore the various possible symmetries of wallpaper groups constructed via mathematical origami. A major characteristic of a wallpaper group is its rotational symmetries, which will have orders 1, 2, 3, 4, or 6. 
A rotation of order 1 is the identity rotation, which is contained in all wallpaper groups. For a group to be an origami structure, the wallpaper group must contain a rotation of order 2, which corresponds to negation. To show the rotations algebraically, the rotation of a vector by $\theta$ radians corresponds to multiplication by the matrix
$$\begin{pmatrix}
    \cos(\theta) & -\sin(\theta) \\
    \sin(\theta) & \cos(\theta)
\end{pmatrix}.$$
For example, rotating the vector $\begin{pmatrix} 1 \\ 0 \end{pmatrix}$, or $ 1 + 0i$ on the complex plane, yields the vector $\begin{pmatrix} 0 \\ 1 \end{pmatrix}$, or $0 + 1i$ when  $\theta=\frac{\pi}{2}$. This operation is seen as
$$\begin{pmatrix}
    \cos(\frac{\pi}{2}) & -\sin(\frac{\pi}{2}) \\
    \sin(\frac{\pi}{2}) & \cos(\frac{\pi}{2})
\end{pmatrix}  \begin{pmatrix} 1 \\ 0 \end{pmatrix} = 
\begin{pmatrix}
    0 & -1 \\
    1 & 0
\end{pmatrix} \begin{pmatrix} 1 \\ 0 \end{pmatrix} = \begin{pmatrix} 0 \\ 1 \end{pmatrix} .$$

An angle set $U=\{0,\alpha, \beta\}$ containing three directions will generate an origami set $M(U)$, and an origami structure with symmetry group $G(U)$. Figure (\ref{abg}) below shows the triangles that are formed in this process, where $\gamma=\pi-\beta$ and $\rho=\beta-\alpha$. The angles $\alpha, \gamma$, and $\rho$ will from now on be referred to as the ``triangle angles" as they are the angles within each triangle formed as shown in the figure below. 



\begin{figure}[h!t] \label{abg}
\centering
\begin{tikzpicture}

\coordinate (A) at (0,0);
\coordinate (B) at (6,0);
\coordinate (C) at (3,3);
\coordinate (D) at (-1,0);
\coordinate (E) at (7,0);
\coordinate (F) at (-1,3);
\coordinate (G) at (7,3);
\coordinate (H) at (-1, -1);
\coordinate (I) at (4, 4);
\coordinate (J) at (5, -1);
\coordinate (K) at (7, 1);
\coordinate (L) at (1, -1);
\coordinate (M) at (-1, 1);
\coordinate (N) at (2, 4);
\coordinate (O) at (7, -1);
\coordinate (P) at (0, -1/4);
\coordinate (Q) at (6, -1/4);
\coordinate (R) at (2.15, 2.65);
\coordinate (S) at (5.1, 0.4);

\draw[ultramarine, thick] (D) -- (E);
\draw[ultramarine, thick] (F) -- (G);
\draw[limegreen, thick] (M) -- (L);
\draw[limegreen, thick] (N) -- (O);
\draw[violet, thick] (H) -- (I);
\draw[violet, thick] (J) -- (K);

\filldraw[black, thick] (A) circle (3pt) node[below] {};
\filldraw[black, thick] (B) circle (3pt) node[below] {};
\filldraw[black, thick] (C) circle (3pt) node[right] {};

\draw[black, thick] (A) ++(0.5,0) arc[start angle=0,end angle=45,radius=0.5] node[midway, right] {$\alpha$};
\draw[black, thick] (B) ++(0.4,0) arc[start angle=0,end angle=135,radius=0.4] node[above right] {\phantom{s} $\beta$};
\draw[black, thick] (R) ++(0.5,0) arc[start angle=225, end angle=315, radius=.5] node[below left] {\phantom{its} $\rho$ \phantom{i}};
\draw[black, thick] (S) ++(0.5,0) arc[start angle=135, end angle=180, radius=.6] node[above left] {$\gamma$};

\node at (P) [below] {0}; 
\node at (Q) [below] {1}; 

\end{tikzpicture}
\label{alpha_beta_gamma}
\caption{The triangle angles $\alpha$, $\rho$, and $\gamma$.}
\end{figure}
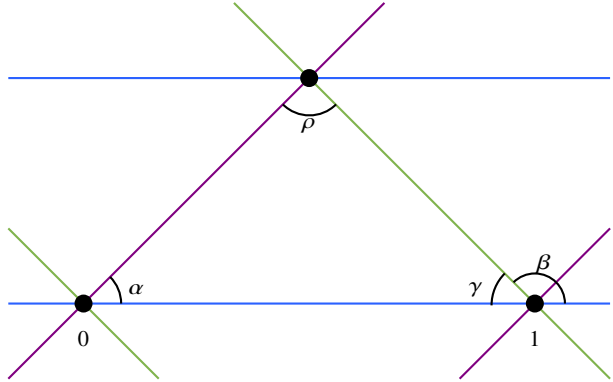

The following lemmas provide information about how the angles relate to each other based on the rotations present in the corresponding wallpaper group. 

\begin{lemma}\label{equi}
    Let $U=\{0, \alpha, \beta\}$ be a set of three angles with corresponding triangle angles $\alpha$, $\rho$, and $\gamma$. Then, $G(U)$ contains a rotation by $\theta \neq k\pi$ for any $k \in \Z$ if and only if $\alpha=\rho=\gamma=\frac{\pi}{3}$.
\end{lemma}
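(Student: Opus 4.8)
The plan is to lean on Nedrenco's theorem, which identifies $M(U)$ with the lattice $\Z + \Z\tau$ for $\tau = [\![0,1]\!]_{\alpha,\beta}$, and to reduce the statement to a combined condition on the lattice and on the set of allowed line directions. The single observation that powers both implications is this: because $S(U)$ is a union of lines whose directions are precisely $\{0,\alpha,\beta\}$ read modulo $\pi$ (a line carries no orientation), any rotation by an angle $\theta$ belonging to $G(U)$ must act on this three-element subset of the circle $\R/\pi\Z$ by the shift $d \mapsto d + \theta$, and must permute it. A rotation with $\theta \equiv 0 \pmod{\pi}$ fixes every direction and is either the identity or negation, which the hypothesis $\theta \neq k\pi$ excludes; so I only ever have to analyze shifts that permute $\{0,\alpha,\beta\}$ nontrivially.

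For the forward implication I would work entirely on $\R/\pi\Z$. A nonzero shift is orientation preserving, so the permutation it induces on the three points must respect their cyclic order and hence be a $3$-cycle rather than a transposition. Writing out $0 \mapsto \alpha \mapsto \beta \mapsto 0$ under addition of a fixed $\theta$ forces $\theta \equiv \alpha$, then $\beta \equiv 2\alpha$, and finally $3\alpha \equiv 0 \pmod{\pi}$, so the three directions are equally spaced and equal $\{0, \tfrac{\pi}{3}, \tfrac{2\pi}{3}\}$ (the other cyclic labeling is symmetric). Translating back through $\gamma = \pi - \beta$ and $\rho = \beta - \alpha$, this is exactly $\alpha = \rho = \gamma = \tfrac{\pi}{3}$.

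For the reverse implication I would assume $\alpha = \tfrac{\pi}{3}$ and $\beta = \tfrac{2\pi}{3}$ (the content of $\alpha = \rho = \gamma = \tfrac{\pi}{3}$), solve Equation (\ref{linear tau}) to get $\tau = [\![0,1]\!]_{\pi/3,\,2\pi/3} = e^{i\pi/3}$, and invoke Nedrenco's theorem to recognize $M(U) = \Z + \Z e^{i\pi/3}$ as the hexagonal lattice. Rotation by $\tfrac{\pi}{3}$ is multiplication by $\tau$; it preserves the lattice because $1 \mapsto \tau$ and $\tau \mapsto \tau^2 = \tau - 1$ both lie in $\Z + \Z\tau$, and it cyclically permutes the directions $\{0,\tfrac{\pi}{3},\tfrac{2\pi}{3}\}$. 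Hence it carries every line of $S(U)$ to another such line through a lattice point, so it lies in $G(U)$ and furnishes a rotation with $\theta = \tfrac{\pi}{3} \neq k\pi$.

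I expect the main obstacle to be the bookkeeping in the single observation rather than any computation: one must argue cleanly that a symmetry of $S(U)$ is forced to preserve the finite direction set modulo $\pi$, and that an orientation-preserving shift of the circle can only permute three points as an equally spaced $3$-cycle (this is what rules out a would-be rotation sending a scalene configuration of directions to itself). Once that is nailed down, both directions reduce to the short verifications above.
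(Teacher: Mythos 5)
Your proposal is correct, and the reverse implication is essentially the paper's (you use multiplication by $\tau=e^{i\pi/3}$ and the relation $\tau^2=\tau-1$ where the paper applies the rotation matrix to $m+n\tau$; same computation). The forward implication, however, is a genuine streamlining. The paper fixes the \emph{smallest} rotation angle $\theta$, shows $\theta\in\{\alpha,\beta\}$, uses the auxiliary rotations by $2\theta$ and $\pi-\theta$ to force $\theta\le\pi/2$, and then runs a case analysis to eliminate $\theta=\pi/2$ before concluding $\theta=\pi/3$. You instead pass to $\R/\pi\Z$ and observe that a rotation by $\theta\not\equiv 0\pmod{\pi}$ induces a fixed-point-free permutation of the three directions, hence a $3$-cycle, which in three lines gives $\theta\equiv\alpha$, $\beta\equiv 2\alpha$, $3\alpha\equiv 0\pmod{\pi}$ and thus the equally spaced set $\{0,\pi/3,2\pi/3\}$. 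This buys you two things: no appeal to minimality of $\theta$ (any single nontrivial rotation suffices), and no separate $\theta=\pi/2$ case. One small repair: your stated reason that the permutation is a $3$-cycle (``orientation preserving, so it respects cyclic order'') is the less direct justification; the clean statement is that a nonzero shift of $\R/\pi\Z$ has no fixed points, and the only fixed-point-free permutations of a $3$-element set are the $3$-cycles. With that sentence tightened, your argument is complete and, like the paper's, implicitly uses only the standard fact that an element of $G(U)$ permutes the finite set of line directions of $S(U)$.
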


\begin{proof}
Let $U=\{0,\alpha,\beta\}$. First, suppose $G(U)$ contains a rotation by $\theta \neq k\pi$ for any $k \in \Z$, and suppose that $\theta$ is the smallest such angle. We may assume that $0<\alpha<\beta<\pi$ and that $0<\theta<\pi$. Modulo $2\pi$, the angle 0 can only be mapped to an angle in the set $\{0, \alpha, \beta, \pi, \pi+\alpha, \pi+\beta\}$. Since $\theta<\pi$, this means a rotation by $\theta$ must map the angle 0 to either $\alpha$ or $\beta$, so either $\theta=\alpha$ or $\theta=\beta$. Furthermore, since there is a rotation by an angle of $\pi$, there is also a rotation by an angle of $\pi-\theta$. If $\theta>\frac{\pi}{2}$ then $\pi-\theta\leq \frac{\pi}{2}$ which is not possible since $\theta$ was chosen to be the smallest angle that is not a multiple of $\pi$. Therefore, we may assume that $0<\theta\leq \frac{\pi}{2}$. 



If $\theta=\frac{\pi}{2}$, then either $\alpha=\frac{\pi}{2}$ or $\beta=\frac{\pi}{2}$. If $\alpha=\frac{\pi}{2}$. Then, $\beta>\frac{\pi}{2}$ and $\beta-\theta=\beta-\frac{\pi}{2}>0$ must also be a nonzero allowed angle that is less than $\pi$, since that is where the line through the origin at angle $\beta$ is mapped under the rotation by $-\theta$. Therefore, either $\beta-\frac{\pi}{2}=\beta$ which is not true, or $\beta-\frac{\pi}{2}=\alpha=\frac{\pi}{2}$ so $\beta=\pi$ which is also not the case. If $\beta=\frac{\pi}{2}$, then, $\alpha<\frac{\pi}{2}$, so $alpha$ is mapped to $\alpha+\theta=\alpha+\frac{\pi}{2}<\pi$ which must be another allowed angle less than $\pi$ so either $\alpha+\frac{\pi}{2}=\alpha$ which is not true, or $\alpha+\frac{\pi}{2}=\beta=\frac{\pi}{2}$ and $\alpha=0$ which is also not the case. Therefore, $\theta\neq \frac{\pi}{2}$ so $\theta<\frac{\pi}{2}$.

If $\theta<\frac{\pi}{2}$, then $G(U)$ must contain $\theta$, $2\theta$ and $\pi-\theta$, all of which are less than $\pi$. Note that $\theta<2\theta$ and $\theta<\pi-\theta$ so $\theta$ is the smallest of the three angles. Since $\alpha<\beta$ it must be the case that $\theta=\alpha$, which leaves $\beta=2\theta$ and $\beta=\pi-\theta$. Therefore, $2\theta=\pi-\theta$ and hence $\theta=\frac{\pi}{3}$ leaving $\alpha=\frac{\pi}{3}$ and $\beta=2\theta=\frac{2\pi}{3}$. Finally, this yields $\gamma=\pi-\beta=\frac{\pi}{3}$ and $\rho=\pi-\alpha-\gamma=\frac{\pi}{3}$.

   Conversely, suppose that $\alpha=\rho=\gamma=\frac{\pi}{3}$. Then, $\beta=\pi-\gamma=\frac{2\pi}{3}$ and so $U=\{0, \frac{\pi}{3}, \frac{2\pi}{3}\}$. There is an angle at every multiple of $\frac{\pi}{3}$ since we may add $\pi$ to any angle, so we now need to show that for any $p\in M(U)$, its rotation by the angle $\theta=\frac{\pi}{3}$ is also in $M(U)$. 
   The initial intersection is $\tau=[\![0,1]\!]_{\alpha,\beta}=[\![0,1]\!]_{\frac{\pi}{3}, \frac{2\pi}{3}}=\frac{1+\sqrt{3}i}{2}$. As shown by Nedrenco \cite{Nedrenco}, $M(U)=\Z+\Z\tau=\{m+n\tau: m,n \in \Z\}$. Note that $m+n\tau=(m+\frac{n}{2})+\left(\frac{n\sqrt{3}}{2}\right)i$ which is represented by the vector $\begin{pmatrix}
        m+\frac{n}{2}\\
        \frac{n\sqrt{3}}{2}
    \end{pmatrix}$.
   
Using the rotational matrix with $\theta=\frac{\pi}{3}$ yields
    $$
   \begin{pmatrix}
        \frac{1}{2} & -\frac{\sqrt{3}}{2} \\
        \frac{\sqrt{3}}{2} & \frac{1}{2}
    \end{pmatrix}
   \begin{pmatrix}
        m+\frac{n}{2}\\
        \frac{n\sqrt{3}}{2}
    \end{pmatrix}=\begin{pmatrix}
        \frac{1}{2}(m-n)\\
        \frac{\sqrt{3}}{2}(m+n)
    \end{pmatrix}.$$
    This corresponds to the point $-n+(m+n)\left(\frac{1+\sqrt{3}i}{2}\right)$ which is also in $M(U)$ since $-n$ and $m+n$ are integers. Therefore, there is a rotation of order $\frac{\pi}{3}\neq k\pi$ as desired.
\end{proof}

As the only origami structure that contains rotations by an angle smaller than $\pi$, the structure discussed above has a unique quality that will be valuable when determining the corresponding wallpaper group when the triangles formed are equilateral.






\section{Reflections}

The other major type of symmetry is reflections, so in this section, we study the properties of reflections in the context of origami constructions. 

To show the reflections algebraically, the reflection of a vector by $\theta$ radians corresponds to multiplication by the matrix
$$\begin{pmatrix}
    \cos(2\theta) & \sin(2\theta) \\
    \sin(2\theta) & -\cos(2\theta)
\end{pmatrix}.$$

For example, reflecting the vector $\begin{pmatrix} 1 \\ 0 \end{pmatrix}$, or $ 0 + i$ on the complex plane, across the angle $\theta$ yields the vector $\begin{pmatrix} \cos(2\theta) \\ \sin(2\theta) \end{pmatrix}$. This operation is seen as
$$\begin{pmatrix}
    \cos(2\theta) & \sin(2\theta) \\
    \sin(2\theta) & \cos(2\theta)
\end{pmatrix}  \begin{pmatrix} 1 \\ 0 \end{pmatrix} = 
\begin{pmatrix} \cos(2\theta) \\ \sin(2\theta) \end{pmatrix}.$$
Like rotational symmetries, the reflectional symmetries of a given origami wallpaper group are determined by the shape of the triangles comprising the structure. 

The following lemma shows where each angle is mapped under a reflection across a given axis.


\begin{lemma} \label{reflect}
    Let $\theta$ and $\eta$ be angles with $0\leq \theta, \eta <2\pi$. Then, when reflected across the line through the origin at angle $\theta$, the angle  $\eta$ is mapped to $2\theta-\eta$. 
\end{lemma}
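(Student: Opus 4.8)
The plan is to translate the statement into the language of the reflection matrix introduced at the start of this section and then reduce to a one-line trigonometric identity. An angle $\eta$ specifies a direction, which I would encode as the unit vector $\begin{pmatrix} \cos\eta \\ \sin\eta \end{pmatrix}$. The reflection across the line through the origin at angle $\theta$ is, by the formula stated above, multiplication by $\begin{pmatrix} \cos(2\theta) & \sin(2\theta) \\ \sin(2\theta) & -\cos(2\theta) \end{pmatrix}$. So the entire content of the lemma is to compute the image of the direction vector under this matrix and recognize it as the direction vector of a new angle.

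The key step is the matrix--vector product itself. Carrying it out gives a first coordinate $\cos(2\theta)\cos\eta + \sin(2\theta)\sin\eta$ and a second coordinate $\sin(2\theta)\cos\eta - \cos(2\theta)\sin\eta$. I would then apply the angle-subtraction identities for cosine and sine, which collapse these to $\cos(2\theta-\eta)$ and $\sin(2\theta-\eta)$ respectively. Hence the image is the unit vector $\begin{pmatrix} \cos(2\theta-\eta) \\ \sin(2\theta-\eta) \end{pmatrix}$, which is exactly the direction associated with the angle $2\theta-\eta$, establishing the claim (with the understanding that $2\theta-\eta$ is read modulo $2\pi$ so as to lie back in $[0,2\pi)$).

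I do not expect any genuine obstacle here, since the proof is a direct verification; the only thing to be careful about is the bookkeeping of which entry of the matrix carries the sign, so that the two angle-subtraction identities are invoked correctly rather than producing $2\theta+\eta$. As an independent sanity check, I would also note the geometric reading: a reflection fixes the mirror line at angle $\theta$ and negates the signed angle measured from it, so the direction $\eta$, which makes angle $\eta-\theta$ with the mirror, is sent to the direction making angle $-(\eta-\theta)$ with the mirror, i.e.\ to the absolute angle $\theta-(\eta-\theta)=2\theta-\eta$. This confirms the computation and explains why $2\theta-\eta$ is the natural answer.
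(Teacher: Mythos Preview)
Your proposal is correct and follows essentially the same route as the paper: both apply the reflection matrix to the unit direction vector $(\cos\eta,\sin\eta)^{T}$, multiply out, and invoke the angle-subtraction identities to recognize the result as $(\cos(2\theta-\eta),\sin(2\theta-\eta))^{T}$. The only minor additions on either side are that the paper explicitly remarks on linearity to pass from the unit vector to the whole line, while you append a geometric sanity check; neither changes the substance of the argument.
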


\begin{proof}
    The line through the origin at angle $\eta$ is the set of points represented by all scalar multiples of the vector $\begin{pmatrix}
        \cos(\eta) \\ \sin(\eta)
    \end{pmatrix}$. When reflected across the line through the origin at angle $\theta$, the vector $\begin{pmatrix}
        \cos(\eta) \\ \sin(\eta)
    \end{pmatrix}$ is mapped to 

$$\begin{pmatrix}
\cos(2\theta) & \sin(2\theta) \\
\sin(2\theta) & -\cos(2\theta)
\end{pmatrix} \begin{pmatrix}
\cos(\eta) \\
\sin(\eta)
\end{pmatrix}=
\begin{pmatrix}
\cos(2\theta)\cos(\eta)+\sin(2\theta)\sin(\eta)\\
\sin(2\theta)\cos(\eta)-\cos(2\theta)\sin(\eta)
\end{pmatrix} = \begin{pmatrix}
\cos(2\theta-\eta)\\
\sin(2\theta-\eta)
\end{pmatrix}.$$
By linearity, any scalar multiple of  $\begin{pmatrix}
        \cos(\eta) \\ \sin(\eta)
    \end{pmatrix}$ will map to the corresponding scalar multiple of $\begin{pmatrix}
        \cos(2\theta-\eta) \\ \sin(2\theta-\eta)
    \end{pmatrix}$, yielding the entire line through the origin at angle $2\theta-\eta.$
\end{proof}

It turns out that if $G(U)$ contains a reflection across the angle $\theta$ then it also contains a reflection across the angle $\theta+\frac{\pi}{2}$ as demonstrated in the next lemma.

\begin{lemma} \label{reflect2} Let $U=\{0, \alpha, \beta\}$ be an angle set. If $G(U)$ contains a reflection across the angle $\theta$, then $G(U)$ must also contain a reflection across the angle $\theta=\frac{\pi}{2}$.
\end{lemma}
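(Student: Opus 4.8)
The plan is to realize the reflection across the angle $\theta+\frac{\pi}{2}$ as the composition of the given reflection across $\theta$ with the order-two rotation that every origami structure already possesses. As noted at the start of Section 3, for $G(U)$ to arise from an origami structure it must contain the rotation of order $2$, namely negation, whose matrix is $-I=\left(\begin{smallmatrix} -1 & 0 \\ 0 & -1\end{smallmatrix}\right)$. Since $G(U)$ is a group and, by hypothesis, contains the reflection across $\theta$, it is closed under composing these two symmetries; it therefore remains only to identify that composite.

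First I would record the reflection matrix across the line at angle $\theta$, namely $R_\theta=\left(\begin{smallmatrix}\cos 2\theta & \sin 2\theta \\ \sin 2\theta & -\cos 2\theta\end{smallmatrix}\right)$, from the formula opening Section 4. Because $-I$ is central it does not matter whether we rotate before or after reflecting, and in either case the product is $-R_\theta=\left(\begin{smallmatrix} -\cos 2\theta & -\sin 2\theta \\ -\sin 2\theta & \cos 2\theta\end{smallmatrix}\right)$. A one-line check using $\cos(2\theta+\pi)=-\cos 2\theta$ and $\sin(2\theta+\pi)=-\sin 2\theta$ shows that this equals $R_{\theta+\pi/2}=\left(\begin{smallmatrix}\cos(2\theta+\pi) & \sin(2\theta+\pi)\\ \sin(2\theta+\pi) & -\cos(2\theta+\pi)\end{smallmatrix}\right)$, which is precisely the reflection across the line at angle $\theta+\frac{\pi}{2}$. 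By closure, this reflection lies in $G(U)$. I would phrase the argument at the level of linear parts, equivalently in the point group $P(U)$, where a reflection is recorded by the direction $\theta$ of its axis; this is the natural reading of ``reflection across the angle $\theta$.''

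The one point to watch, and the step I expect to be the only genuine obstacle, is the distinction between a true mirror and a glide reflection. Composing negation with a reflection whose axis is not assumed to pass through the origin yields an orientation-reversing isometry with the correct linear part $R_{\theta+\pi/2}$, but in the full group one should confirm this composite is a pure reflection rather than a glide reflection. This is resolved by recalling that $M(U)=\Z+\Z\tau$ is a lattice centered at $0$, so negation fixes the origin and the relevant reflection axes may be taken through $0$; the composition is then an honest linear reflection, as required.
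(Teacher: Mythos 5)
Your proposal is correct and follows essentially the same route as the paper: the paper likewise writes the reflection matrix across $\theta+\frac{\pi}{2}$ as the product of the reflection matrix across $\theta$ with $-I$ (using $\cos(2\theta+\pi)=-\cos 2\theta$, $\sin(2\theta+\pi)=-\sin 2\theta$) and invokes closure of $G(U)$ under composition with the order-two rotation. Your added remark distinguishing a pure mirror from a glide reflection is a careful touch the paper omits, but it does not change the argument.
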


\begin{proof}
    Suppose that $G(U)$ contains reflections across the angle $\theta$. The  matrix for a reflection across the line $\theta$ is
$$\begin{pmatrix}
\cos(2\theta) & \sin(2\theta) \\
\sin(2\theta) & -\cos(2\theta)
\end{pmatrix} .$$

The  matrix for a reflection across the angle $\theta +\frac{\pi}{2}$ is
$$\begin{pmatrix}
\cos(2\theta+\pi) & \sin(2\theta+\pi) \\
\sin(2\theta+\pi) & -\cos(2\theta+\pi)
\end{pmatrix} =\begin{pmatrix}
-\cos(2\theta) & -\sin(2\theta) \\
-\sin(2\theta) & \cos(2\theta)
\end{pmatrix} =\begin{pmatrix}
\cos(2\theta) & \sin(2\theta) \\
\sin(2\theta) & -\cos(2\theta)
\end{pmatrix} \begin{pmatrix}
-1 & 0 \\
0 & -1
\end{pmatrix} .$$

Therefore, the reflection across the angle $\theta+\frac{\pi}{2}$ is the composition of a reflection across the angle $\theta$ and a rotation by $\pi$ (i.e., negation), both of which are symmetries in $G(U)$. It follows that the reflection across the angle $\theta+\frac{\pi}{2}$ is also in $G(U)$.

    Now let $\eta \in U$ be an arbitrary angle. Then, by Lemma \ref{reflect}, the angle $\eta$ is mapped to the angle $2\theta-\eta$ under the reflection by the angle $\theta$ by Lemma \ref{reflect}. Therefore, $2\theta-\eta \in U$. Again by Lemma \ref{reflect}, the angle $\eta$ is mapped to the angle $2\left(\theta+\frac{\pi}{2}\right)-\eta=2\theta+\pi-\eta=2\theta-\eta$. But, $2\theta-\eta \in M(U)$, so for any angle $\theta \in U$, its reflection aross the angle $\theta+\frac{\pi}{2}$ is also in $U$, so $G(U)$ contains reflections across the angle $\theta+\frac{\pi}{2}$.
\end{proof}

The next few lemmas relate the reflection axes of an origami structure to the angles within the triangles of the structure. As it turns out, the reflection axes present determine the equality of two angles within the triangles of the origami structure.

\begin{lemma} \label{alphagamma}
    Let $U = \{0,\alpha, \beta\} $ be a set of three angles with corresponding triangle angles $\alpha, \rho$, and $\gamma$. Then, $\alpha=\gamma$ if and only if $G(U)$ group contains reflections about the angles $0$ and $\frac{\pi}{2}.$ 
    
\end{lemma}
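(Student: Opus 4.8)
The plan is to convert the geometric condition ``$G(U)$ contains reflections across $0$ and $\tfrac{\pi}{2}$'' into two checkable conditions: preservation of the direction set $\{0,\alpha,\beta\}$ modulo $\pi$, and preservation of the lattice $M(U)=\Z+\Z\tau$. Before splitting into the two implications I would record two simplifications. First, by Lemma \ref{reflect2} (together with the fact that negation always lies in $G(U)$ for an origami structure), a reflection across $0$ belongs to $G(U)$ if and only if a reflection across $\tfrac{\pi}{2}$ does; so the two named reflections amount to a single condition and it suffices to analyze reflection across the real axis, i.e.\ complex conjugation. Second, since $\gamma=\pi-\beta$, the statement $\alpha=\gamma$ is equivalent to $\beta=\pi-\alpha$, and I would carry the argument in this form.

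For the forward direction I would assume $\beta=\pi-\alpha$ and compute $\tau=[\![0,1]\!]_{\alpha,\beta}$ from Equation (\ref{linear tau}). The isosceles configuration makes the real part of $\tau$ equal to $\tfrac12$; explicitly $\tau=\tfrac12+\tfrac{\tan\alpha}{2}i$. Then conjugation sends $\tau\mapsto\bar\tau=1-\tau\in\Z+\Z\tau=M(U)$, so conjugation preserves the lattice. By Lemma \ref{reflect}, conjugation (reflection across $0$) sends the direction $\alpha$ to $-\alpha\equiv\pi-\alpha=\beta$ and $\beta$ to $\pi-\beta=\alpha$ modulo $\pi$, fixing $0$, so it permutes the allowed directions. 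Preserving both the points of $M(U)$ and the line directions, conjugation is a genuine symmetry of $S(U)$ and hence lies in $G(U)$; Lemma \ref{reflect2} then supplies the reflection across $\tfrac{\pi}{2}$.

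For the reverse direction I would assume reflection across $0$ lies in $G(U)$. Since every line of $S(U)$ has direction in $\{0,\alpha,\beta\}$ modulo $\pi$, this reflection must permute those directions, and by Lemma \ref{reflect} it sends $\alpha\mapsto\pi-\alpha$ and $\beta\mapsto\pi-\beta$ (mod $\pi$). Thus both $\pi-\alpha$ and $\pi-\beta$ must lie in $\{0,\alpha,\beta\}$. Working under $0<\alpha<\beta<\pi$, I would eliminate cases: $\pi-\alpha$ and $\pi-\beta$ cannot be $0$, and each of the degenerate possibilities $\alpha=\tfrac{\pi}{2}$ and $\beta=\tfrac{\pi}{2}$ forces the other reflected angle outside the allowed set, a contradiction. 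The only surviving case is $\pi-\alpha=\beta$, that is $\beta=\pi-\alpha$, which is exactly $\alpha=\gamma$.

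The main obstacle is the bookkeeping in the reverse direction: keeping the modulo-$\pi$ reductions consistent and cleanly ruling out the degenerate subcases $\alpha=\tfrac{\pi}{2}$ and $\beta=\tfrac{\pi}{2}$ using the ordering constraint. By contrast, the forward direction hinges only on the single observation that $\tau$ has real part $\tfrac12$, which immediately gives $\bar\tau=1-\tau\in M(U)$.
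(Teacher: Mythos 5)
Your proposal is correct and follows essentially the same route as the paper: in the forward direction both compute $\tau=\tfrac12(1+i\tan\alpha)$, observe that conjugation sends $\tau\mapsto 1-\tau\in\Z+\Z\tau$ while permuting the directions $\{0,\alpha,\pi-\alpha\}$, and then invoke Lemma \ref{reflect2} for the reflection across $\tfrac{\pi}{2}$; in the reverse direction both reduce to the requirement that $\pi-\alpha$ land back in the direction set. Your case analysis in the reverse direction is in fact slightly more complete than the paper's, which omits the degenerate possibility $\pi-\alpha=\alpha$ (i.e.\ $\alpha=\tfrac{\pi}{2}$) that you explicitly rule out.
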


\begin{proof}
Suppose $\alpha = \gamma$, so $\beta=\pi-\alpha$ and $U=\{0, \alpha, \pi-\alpha\}.$ Then, using Equation \ref{intformula} and results from \cite{Nedrenco}, $M(U) = \Z + \Z \tau$ where $\tau = [\![0,1]\!]_{\alpha, \beta}=\frac{1}{2}(1+i\tan(\alpha))$.  The  matrix for a reflection across the line $\theta = 0$ is
$$\begin{pmatrix}
1 & 0 \\
0 & -1
\end{pmatrix}.$$
Using the reflection matrix and viewing 1 and $\tau$ as the vectors
$\begin{pmatrix} 1 \\ 0 \end{pmatrix}$ and
$\begin{pmatrix} \frac{1}{2} \\ \frac{1}{2}\tan(\alpha) \end{pmatrix}$, 
1 and $\tau$ are mapped to 1 and $1-\tau$, respectively. Therefore, any $m + n\tau \in M(U)$ will be reflected to $m + n - n\tau$ when $\theta = 0$, where $m, n \in \Z$, showing that $M(U)$ is closed under reflections across $\theta = 0$. By Lemma \ref{reflect}, The angle $0$ is mapped to $0-0=0$, the angle $\alpha$ is mapped to $-\alpha=\pi-\alpha$, and the angle $\pi-\alpha$ is mapped to $0-(\pi-\alpha)=\alpha$ under the reflection across the angle $0$. Therefore, $G(U)$ contains a reflection across the angle $\theta=0$. By Lemma \ref{reflect2}, $G(U)$ also contains a reflection across the angle $\frac{\pi}{2}$.


Conversely, suppose that $G(U)$ contains reflections about the angles $0$ and $\frac{\pi}{2}.$ Note that the origin has three lines through it at angles $0, \alpha$, and $\pi-\gamma$ where $\alpha,\gamma \neq 0$. When reflected across the line through the origin at angle $0$, the line through the origin at angle $\alpha$ is mapped to the line through the origin at angle $-\alpha$, which is the same as the line through the origin at angle $\pi-\alpha$. Therefore, either $\pi-\alpha=0$ or $\pi-\alpha=\pi-\gamma$. If $\pi-\alpha=0$ then $\alpha=\pi$ so $\gamma=\pi-\pi=0$ which is not the case since $\gamma \neq 0$. Therefore, $\pi-\alpha=\pi-\gamma$ so $\alpha=\gamma.$
\end{proof}

\begin{lemma} \label{alphabeta}
   Let $U = \{0,\alpha, \beta\} $ be a set of three angles with corresponding triangle angles $\alpha, \rho=\beta-\alpha$, and $\gamma=\pi-\beta$. Then, $\alpha=\rho$ if and only if $G(U)$ group contains reflections about the angles $\frac{\alpha + \rho}{2}$ and $\frac{\alpha + \rho + \pi}{2}$.
\end{lemma}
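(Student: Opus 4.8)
The plan is to follow the template of Lemma \ref{alphagamma}, first simplifying the claimed reflection axis. Since $\rho = \beta - \alpha$, we have $\frac{\alpha + \rho}{2} = \frac{\beta}{2}$, so the claimed axis is the bisector of the angle between the lines at $0$ and $\beta$; moreover the hypothesis $\alpha = \rho$ is equivalent to $\beta = 2\alpha$, i.e. the middle direction $\alpha$ coincides with this bisector. By Lemma \ref{reflect2}, a reflection across $\frac{\beta}{2}$ automatically forces one across $\frac{\beta}{2} + \frac{\pi}{2} = \frac{\alpha + \rho + \pi}{2}$, so it suffices to treat the single axis $\frac{\beta}{2}$ and invoke Lemma \ref{reflect2} for the second axis in each direction.

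For the forward direction, I would assume $\beta = 2\alpha$, so $U = \{0, \alpha, 2\alpha\}$. Using Equation \ref{intformula} (equivalently the linear system in \ref{linear tau}), I would compute the generator $\tau = [\![0,1]\!]_{\alpha,\beta}$; solving gives $\tau = 1 + e^{i\beta}$, represented by the vector $(1 + \cos 2\alpha, \sin 2\alpha)$, and by \cite{Nedrenco} we have $M(U) = \Z + \Z\tau$. I would then apply the reflection matrix across $\theta = \frac{\beta}{2} = \alpha$ to the generators: a short computation shows $1 \mapsto \tau - 1$ and $\tau \mapsto \tau$ (the latter because $\tau$ lies on the line at angle $\alpha$, which is the reflection axis). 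Hence a general lattice point $m + n\tau \mapsto -m + (m+n)\tau \in M(U)$, so the lattice is preserved. Checking directions via Lemma \ref{reflect}, reflection across $\frac{\beta}{2}$ sends $0 \mapsto \beta$, $\beta \mapsto 0$, and $\alpha \mapsto \alpha$, so $U$ is preserved. Therefore this reflection is a genuine symmetry of the structure and lies in $G(U)$, and Lemma \ref{reflect2} supplies the orthogonal axis.

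For the converse, I would assume $G(U)$ contains a reflection across $\frac{\alpha + \rho}{2} = \frac{\beta}{2}$ and examine the three lines through the origin, at directions $0$, $\alpha$, and $\beta$ taken modulo $\pi$. By Lemma \ref{reflect}, this reflection sends the direction $\alpha$ to $2 \cdot \frac{\beta}{2} - \alpha = \beta - \alpha = \rho$. Since the reflection is a symmetry, $\rho$ must again be an allowed direction modulo $\pi$. As $0 < \alpha < \beta < \pi$ forces $\rho = \beta - \alpha \in (0, \pi)$, each direction-congruence becomes an honest equality; the cases $\rho = \beta$ and $\rho = 0$ yield the contradictions $\alpha = 0$ and $\alpha = \beta$ respectively, leaving only $\rho = \alpha$, as desired.

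The computation is mostly routine; the key is the simplification $\frac{\alpha + \rho}{2} = \frac{\beta}{2}$ together with the observation that under $\alpha = \rho$ this bisector is exactly the middle line $\alpha$, which is what makes $\tau$ a fixed point and keeps the closure computation clean. The one genuinely delicate step is the converse's bookkeeping of directions modulo $\pi$: I must argue that each direction-congruence is actually an equality, using the range $(0,\pi)$, before discarding the degenerate cases, so that no spurious mod-$\pi$ solution is overlooked.
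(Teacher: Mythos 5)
Your proof is correct and follows essentially the same route as the paper's: verify closure of the lattice $\Z+\Z\tau$ and of the angle set under the reflection across $\alpha=\tfrac{\beta}{2}$, invoke Lemma \ref{reflect2} for the perpendicular axis, and for the converse track where the direction $\alpha$ is sent and eliminate the degenerate cases. The only real difference is that you use the actual intersection point $\tau=1+e^{i\beta}$ (so $1\mapsto\tau-1$, $\tau\mapsto\tau$, $m+n\tau\mapsto -m+(m+n)\tau$) where the paper takes the generator $e^{i\beta}$ of the same lattice and gets $m+n\tau\mapsto n+m\tau$; your value of $[\![0,1]\!]_{\alpha,2\alpha}$ is in fact the correct one, and the two choices generate the same $M(U)$, so the argument is unaffected.
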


\begin{proof}
Suppose that $\alpha=\rho$, so that $\beta=2\alpha$ and $U=\{0, \alpha, 2\alpha\}.$ Then, using Equation \ref{intformula} and results from \cite{Nedrenco}, $M(U) = \Z + \Z \tau$ where $\tau = [\![0,1]\!]_{\alpha, \beta} = \cos(2\alpha)+i\sin(2\alpha).$
The matrix for a reflection across $\theta = \frac{\alpha + \rho}{2} = \alpha$ is given by
$$\begin{pmatrix}
\cos(2\alpha) & \sin(2\alpha) \\
\sin(2\alpha) & -\cos(2\alpha)
\end{pmatrix}.$$
Using the reflection matrix and viewing 1 and $\tau$ as vectors 
$\begin{pmatrix} 1 \\ 0 \end{pmatrix}$ and
$\begin{pmatrix} \cos(2\alpha) \\ \sin(2\alpha) \end{pmatrix}$, 
$1$ and $\tau$ are mapped to $\tau$ and $1$, respectively. Therefore, any $m + n\tau \in M(U)$ is mapped to $n+m\tau$ when $\theta = \frac{\alpha+\rho}{2}=\alpha$, showing $M(U)$ is closed under reflections across $\frac{\alpha+\rho}{2}$. By Lemma \ref{reflect}, the angle 0 is mapped to $2\alpha$, the angle $2\alpha$ is mapped to $2\alpha-2\alpha=0$, and the angle $\alpha$ is mapped to $2\alpha-\alpha=\alpha$ under the reflection across the angle $\alpha$. Therefore, $G(U)$ contains a reflection across the angle $\theta=\alpha=\frac{\alpha+\rho}{2}$.
By Lemma \ref{reflect2},  $G(U)$ also contains a reflection across the angle $\frac{\alpha+\rho+\pi}{2}$.

Conversely, suppose that $G(U)$ contains reflections about the angles $\frac{\alpha+\rho}{2}$ and $\frac{\alpha+\rho+\pi}{2}.$ Note that the origin has three lines through it at angles $0, \alpha$, and $\pi-\gamma=\alpha+\rho$ where $\alpha, \rho, \gamma \neq 0$. When reflected across the line through the origin at angle $\frac{\alpha+\rho}{2}$, the line through the origin at angle $\alpha$ is mapped to the line through the origin at angle $2\left(\frac{\alpha+\rho}{2}\right)-\alpha=\rho$. Therefore, either $\rho=0$, $\rho=\alpha$, or $\rho=\alpha+\rho$ in which case $\alpha=0$. Since $\alpha\neq 0$ and $\rho \neq 0$ it must be true that $\rho=\alpha$.
\end{proof}

\begin{lemma} \label{betagamma}
    Let $U = \{0,\alpha, \beta\} $ be a set of three angles with corresponding triangle angles $\alpha, \rho=\beta-\alpha$, and $\gamma=\pi-\beta$. Then, $\rho=\gamma$ if and only if $G(U)$ contains reflections about the angles $\frac{\alpha}{2}$ and $\frac{\alpha+\pi}{2}$.
\end{lemma}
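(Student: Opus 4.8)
The plan is to follow the same two-part template used in Lemmas \ref{alphagamma} and \ref{alphabeta}. For the forward direction I would translate the hypothesis $\rho=\gamma$ into an explicit description of $U$ and of the lattice generator $\tau$, exhibit the reflection across $\frac{\alpha}{2}$ as a linear map that permutes the generators $1$ and $\tau$ of $M(U)=\Z+\Z\tau$, verify the induced action on directions with Lemma \ref{reflect}, and then invoke Lemma \ref{reflect2} to get the second axis $\frac{\alpha+\pi}{2}$. For the converse I would use only the directions of the three lines through the origin together with Lemma \ref{reflect}.

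For the forward direction, first rewrite $\rho=\gamma$ as $\beta-\alpha=\pi-\beta$, i.e. $\beta=\frac{\pi+\alpha}{2}$, so $U=\{0,\alpha,\frac{\pi+\alpha}{2}\}$. The key computation is to identify $\tau=[\![0,1]\!]_{\alpha,\beta}$; I claim $\tau=\cos\alpha+i\sin\alpha$. Geometrically, $\rho=\gamma$ makes the triangle $0,1,\tau$ isosceles with the side $\overline{0\tau}$ equal to the base $\overline{01}$, so $\tau$ lies at distance $1$ from the origin in the direction $\alpha$. (This also follows directly from Equation \ref{intformula}: solving the linear system closes precisely when $\sin\rho=\sin\beta$, which holds since $\rho=\pi-\beta$.) By Nedrenco, $M(U)=\Z+\Z\tau$. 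Viewing $1$ and $\tau$ as the vectors $(1,0)^{T}$ and $(\cos\alpha,\sin\alpha)^{T}$, the reflection matrix across $\theta=\frac{\alpha}{2}$, namely $\begin{pmatrix}\cos\alpha & \sin\alpha \\ \sin\alpha & -\cos\alpha\end{pmatrix}$, sends $1\mapsto\tau$ and $\tau\mapsto 1$. Hence every $m+n\tau$ maps to $n+m\tau\in M(U)$, so $M(U)$ is closed under this reflection, and by Lemma \ref{reflect} the directions are permuted among themselves: $0\leftrightarrow\alpha$, with $\beta$ fixed since $\alpha-\beta\equiv\pi-\rho=\pi-\gamma=\beta\pmod{\pi}$. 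Thus $G(U)$ contains the reflection across $\frac{\alpha}{2}$, and Lemma \ref{reflect2} upgrades this to a reflection across $\frac{\alpha}{2}+\frac{\pi}{2}=\frac{\alpha+\pi}{2}$.

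For the converse, assume $G(U)$ contains both reflections and argue with directions only. The origin has lines at angles $0,\alpha$, and $\pi-\gamma=\alpha+\rho$. By Lemma \ref{reflect}, reflecting the line at $\alpha+\rho$ across $\frac{\alpha}{2}$ produces the direction $\alpha-(\alpha+\rho)=-\rho\equiv\pi-\rho\pmod{\pi}$, which, being the image under a symmetry, must again lie in $\{0,\alpha,\alpha+\rho\}$. The case $\pi-\rho=0$ forces $\rho=\pi$, and the case $\pi-\rho=\alpha$ forces $\gamma=\pi-\alpha-\rho=0$, both excluded; the remaining case $\pi-\rho=\alpha+\rho=\pi-\gamma$ yields $\rho=\gamma$, completing the proof.

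I expect the main obstacle to be pinning down $\tau$ cleanly. Unlike the $\tan$- and rotation-type generators appearing in Lemmas \ref{alphagamma} and \ref{alphabeta}, the neatest route here is the geometric isosceles observation $\tau=\cos\alpha+i\sin\alpha$, and I want the supporting identity ($\sin\rho=\sin\beta$ under $\rho=\pi-\beta$) stated so a reader can confirm it without re-deriving the whole linear system. Once $\tau=\cos\alpha+i\sin\alpha$ is in hand, the reflection across $\frac{\alpha}{2}$ is manifestly the swap $1\leftrightarrow\tau$, and everything else is routine bookkeeping identical in spirit to Lemma \ref{alphabeta}.
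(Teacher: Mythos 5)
Your proposal is correct and follows essentially the same route as the paper: identify $\tau=[\![0,1]\!]_{\alpha,\beta}=\cos\alpha+i\sin\alpha$, show the reflection across $\frac{\alpha}{2}$ swaps the lattice generators $1$ and $\tau$ and permutes the three directions via Lemma \ref{reflect}, invoke Lemma \ref{reflect2} for the axis $\frac{\alpha+\pi}{2}$, and prove the converse by the same case analysis on where the direction $\alpha+\rho$ can land. The only difference is cosmetic: you justify $\tau=\cos\alpha+i\sin\alpha$ by the law-of-sines/isosceles observation, whereas the paper obtains it directly from Equation \ref{intformula}.
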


\begin{proof}
    Suppose that $\rho = \gamma$, so $\alpha=\pi-2\rho$, $\beta=\pi-\rho$, and $U = \{0, \alpha, \pi -\rho\}.$ Then, using Equation \ref{intformula} and results from \cite{Nedrenco}, $M(U) = \Z + \Z \tau$ where $\tau =[\![0,1]\!]_{\alpha, \beta} = \cos(\pi-2\rho)+i\sin(\pi-2\rho)=\cos(\alpha)+i\sin(\alpha).$
    
   The matrix for a reflection across $\theta = \frac{\alpha}{2}$ is given by
$$\begin{pmatrix}
\cos(\alpha) & \sin(\alpha) \\
\sin(\alpha) & -\cos(\alpha)
\end{pmatrix}.$$
Using the reflection matrix and viewing 1 and $\tau$ as vectors 
$\begin{pmatrix} 1 \\ 0 \end{pmatrix}$ and
$\begin{pmatrix} \cos(\alpha) \\ \sin(\alpha) \end{pmatrix}$, 
$1$ and $\tau$ are mapped to $\tau$ and $1$, respectively. Therefore, any $m + n\tau \in M(U)$ is mapped to $n+m\tau$ when $\theta = \frac{\alpha+\rho}{2}=\alpha$, showing $M(U)$ is closed under reflections across $\frac{\alpha}{2}$. By Lemma \ref{reflect}, the angle $0$ is mapped to the angle $2\left(\frac{\alpha}{2}\right)-0=\alpha$, the angle $\alpha$ is mapped to the angle $\alpha-\alpha=0$, and the angle $\pi-\rho$ is mapped to the angle $\alpha-(\pi-\rho)=\alpha-(\alpha+\gamma)=-\gamma=\pi-\rho$. Therefore, $G(U)$ contains a reflection across the angle $\theta=\frac{\alpha}{2}$. By Lemma \ref{reflect2}, $G(U)$ also contains a reflection across the angle $\frac{\alpha+\pi}{2}$.

Conversely, suppose that $G(U)$ contains reflections about the angles $\frac{\alpha}{2}$ and $\frac{\alpha + \pi}{2}.$ Note that the origin has three lines through it at angles $0, \alpha$, and $\rho$ where $\alpha,\gamma \neq 0$.
When reflected across the line through the origin at angle $\frac{\alpha}{2}$, the line through the origin at angle $\alpha+\rho$ is mapped to the line through the origin at angle $-\rho$ which is the same as the line through the origin at angle $\pi-\rho$. Therefore, either $\pi-\rho=0$, $\pi-\rho=\alpha$, or $\pi-\rho=\pi-\gamma$. If $\pi-\rho=\alpha$ then $\alpha+\rho=\pi$, but $\alpha+\rho=\pi-\gamma$, meaning that $\gamma=0$. Since $\rho<\pi$ and $\gamma \neq 0$, it must be the case that $\pi-\rho=\pi-\gamma$, or $\rho=\gamma$.
\end{proof}

Combining Lemmas \ref{alphagamma}, \ref{alphabeta}, and \ref{betagamma}, it follows that if $G(U)$ contains reflections about an angle $\theta$ with $0\leq \theta<\pi$, then at least one of the following holds:
 
 \begin{enumerate}
     \item $\alpha=\gamma$ and $\theta=0$ or $\theta=\frac{\pi}{2}$
     \item $\alpha=\rho$ and $\theta=\frac{\alpha+\rho}{2}$ or $\theta=\frac{\alpha+\rho+\pi}{2}$
     \item $\rho = \gamma$ and $\theta = \frac{\alpha}{2}$ or $\theta = \frac{\alpha + \pi}{2}$
 \end{enumerate}
In other words, the wallpaper pattern contains isosceles triangles.

\section{Main results for three angles}

In this section, we combine all of the lemmas from previous sections regarding which reflections and rotations are present in the origami wallpaper group generated by a given set of three angles. As previously discussed, the origami structures generated are made up of triangles such that the different lattice types can be subdivided into individual triangles. This unique characteristic will come into play when determining the different wallpaper groups generated by the triangles.

\begin{theorem} \label{mainthm}
Let $U = \{0,\alpha, \beta\} $ be a set of three angles with corresponding triangle angles $\alpha, \rho=\beta-\alpha$, and $\gamma=\pi-\beta$. If the triangles within $S(U)$ are scalene, isosceles, or equilateral, then the corresponding wallpaper groups generated by the triangles are p2, cmm, and p6m, respectively.
 \end{theorem}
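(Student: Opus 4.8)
The plan is to handle the three cases---scalene, isosceles, and equilateral---separately, in each case first determining the point group $P(U)$ from the rotations and reflections guaranteed by the earlier lemmas and then reading off the wallpaper group. The organizing fact is that among the seventeen wallpaper groups, each cyclic point group $C_n$ is realized by a \emph{unique} group, as is the dihedral group $D_6$; only the point groups $D_1$, $D_2$, $D_3$, $D_4$ are shared by several groups. Recall also from Section 3 that every origami structure contains negation, so $P(U)$ always contains the order-$2$ rotation.

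First I would settle the rotations uniformly. By Lemma \ref{equi}, $G(U)$ contains a rotation by an angle that is not a multiple of $\pi$ if and only if the triangle is equilateral; hence the rotation subgroup of $P(U)$ is exactly $C_2$ in the scalene and isosceles cases, whereas in the equilateral case Lemma \ref{equi} furnishes a rotation by $\pi/3$ and therefore all of $C_6$. Next I would settle the reflections via the corollary to Lemmas \ref{alphagamma}, \ref{alphabeta}, and \ref{betagamma}: $G(U)$ contains a reflection exactly when two triangle angles coincide, i.e. in the isosceles and equilateral cases, and by Lemma \ref{reflect2} such reflections always occur in perpendicular pairs. Combining these gives $P(U)=C_2$ (scalene), $P(U)=D_2$ (isosceles), and $P(U)=D_6$ (equilateral). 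The scalene and equilateral cases are then immediate: $C_2$ forces p2 and $D_6$ forces p6m. For the scalene case I would add that the absence of pure reflections also excludes glide reflections, since the linear part of a glide reflection is a reflection and would appear in $P(U)=C_2$.

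The substantive case is the isosceles one, where $P(U)=D_2$ admits four candidates: pmm, pmg, pgg, and cmm. The lemmas produce \emph{genuine} mirrors in two perpendicular directions, which rules out pmg (mirrors in one direction only) and pgg (no mirrors), leaving pmm versus cmm. These are distinguished by the translation lattice $M(U)=\Z+\Z\tau$: pmm has a primitive rectangular lattice and cmm a centered rectangular (rhombic) one. I would compute the lattice from the values of $\tau$ recorded in the reflection lemmas. In the representative subcase $\alpha=\gamma$, where $\tau=\frac{1}{2}(1+i\tan\alpha)$, reflection across the real axis sends $\tau$ to $1-\tau$, so $\tau$ is the center of the rectangle spanned by $1$ and $i\tan\alpha$---the index-$2$ rectangular sublattice---exhibiting $M(U)$ as centered rectangular with mirrors along the rectangle's sides. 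This gives cmm. The subcases $\alpha=\rho$ (with $\tau=e^{2i\alpha}$) and $\rho=\gamma$ (with $\tau=e^{i\alpha}$) are handled identically: each is a rhombic lattice of two unit vectors whose angle bisector is the mirror axis, hence cmm again.

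The step I expect to be the main obstacle is exactly this lattice identification, specifically confirming that pmm never arises. The delicate point is that for special isosceles angles, such as $\alpha=\pi/4$, the \emph{point} set $M(U)$ is in fact a square lattice, so one must classify by the symmetry group $G(U)$ of the whole structure $S(U)$ rather than by the point lattice alone: the extra fourfold symmetry of the points is not a symmetry of $S(U)$, as it fails to preserve the set of line directions (consistent with Lemma \ref{equi}), so $P(U)$ stays $D_2$ and the label remains cmm. Carrying this out cleanly---verifying in each subcase that the mirror axes run along the diagonals, never the primitive edges, of the lattice, so that the lattice is genuinely centered rather than primitive rectangular---is where the care is required.
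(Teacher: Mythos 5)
Your proposal is correct and follows the same overall skeleton as the paper: both arguments feed Lemma \ref{equi} and Lemmas \ref{alphagamma}, \ref{alphabeta}, \ref{betagamma} into the classification to get $P(U)\cong C_2$, $D_2$, $D_6$ in the scalene, isosceles, and equilateral cases, and both treat the scalene and equilateral cases as immediate. Where you genuinely diverge is the isosceles case, which is also where the paper is thinnest: the paper settles cmm versus pmm by simply asserting the existence of a glide-reflection axis parallel to the bases of the isosceles triangles, with no computation, whereas you distinguish the two by the lattice type, computing $\tau$ in each of the three isosceles subcases ($\tau=\tfrac12(1+i\tan\alpha)$, $\tau=e^{2i\alpha}$, $\tau=e^{i\alpha}$) and verifying that $M(U)=\Z+\Z\tau$ is a centered rectangular (rhombic) lattice whose mirror directions are the diagonals rather than the primitive edges. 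The two criteria are equivalent characterizations of cmm, but your route is the more rigorous one here, since you actually exhibit the index-$2$ rectangular sublattice instead of asserting the glide. Your additional observations --- that $C_n$ and $D_6$ each determine a unique wallpaper group so the scalene and equilateral cases need no further work, that the absence of reflections in $P(U)$ also rules out glide reflections in the scalene case, and that for special values such as $\alpha=\pi/4$ one must classify by the symmetry of the full structure $S(U)$ (which does not preserve the direction set $U$ under a quarter turn) rather than by the accidental extra symmetry of the point lattice --- all address gaps the paper leaves implicit, and the last one is a genuinely useful caution that the paper's proof does not mention at all.
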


 \begin{proof}
     If the triangles within the origami structure are scalene, then the only rotations in $G(U)$ are multiples of $\pi$ radians by Lemma \ref{equi}. Moreover, if $\alpha\neq\rho \neq \gamma$, then by Lemmas \ref{alphagamma}, \ref{alphabeta}, and \ref{betagamma}, $G(U)$ contains no reflections. Therefore, $G(U)$ corresponds to the wallpaper group p2.

     If the triangles within the origami structure are isosceles but not equilateral, then the only rotations in $G(U)$ are multiples of $\pi$ radians by Lemma \ref{equi}. Moreover, by Lemmas \ref{alphagamma}, \ref{alphabeta}, and \ref{betagamma}, $G(U)$ contains exactly two perpendicular reflection axes in all cases. Since there is also a glide reflection axis parallel to the bases of the isosceles triangles, $G(U)$ corresponds to the wallpaper group cmm.

     If the triangles within the origami structure are equilateral, then by Lemma \ref{equi}, $G(U)$ contains rotations by $\frac{\pi}{3}$ which have order 6. Moreover, $\alpha=\rho= \gamma=\frac{\pi}{3}$ and so by Lemmas \ref{alphagamma}, \ref{alphabeta}, and \ref{betagamma}, $G(U)$ contains the 6 reflection axes $0, \frac{\pi}{6}, \frac{\pi}{3}, \frac{\pi}{2}, \frac{2\pi}{3}$, and $\frac{5\pi}{6}$. Therefore, $G(U)$ corresponds to the wallpaper group p6m.

 \end{proof}




The following examples show three different wallpaper structures corresponding to the three wallpaper groups that can be obtained via origami construction.

\begin{example} \label{p2}  $U=\{0, \arcsin\left(\frac{2\sqrt{5}}{5}\right), \frac{\pi}{2}\}$

    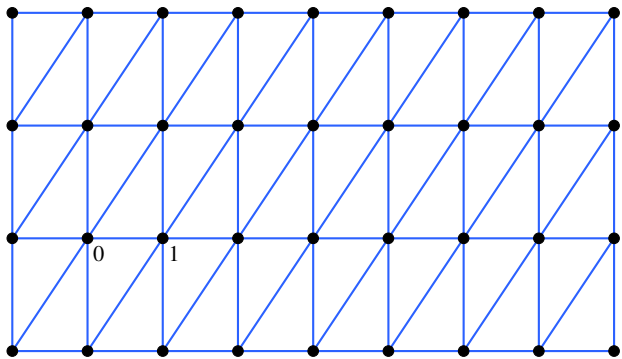
\begin{figure}[h!t]
\centering
\begin{tikzpicture}

\coordinate (A) at (-1,0);
\coordinate (B) at (0,0);
\coordinate (C) at (1,0);
\coordinate (D) at (2,0);
\coordinate (E) at (3,0);
\coordinate (F) at (4,0);
\coordinate (G) at (5,0);
\coordinate (H) at (6,0);
\coordinate (I) at (7,0);
\coordinate (J) at (-1,3/4*2);
\coordinate (K) at (0,3/4*2);
\coordinate (L) at (1,3/4*2);
\coordinate (M) at (2,3/4*2);
\coordinate (N) at (3,3/4*2);
\coordinate (O) at (4,3/4*2);
\coordinate (P) at (5,3/4*2);
\coordinate (Q) at (6,3/4*2);
\coordinate (R) at (7,3/4*2);
\coordinate (S) at (-1,3/4*4);
\coordinate (T) at (0,3/4*4);
\coordinate (U) at (1,3/4*4);
\coordinate (V) at (2,3/4*4);
\coordinate (W) at (3,3/4*4);
\coordinate (X) at (4,3/4*4);
\coordinate (Y) at (5,3/4*4);
\coordinate (Z) at (6,3/4*4);
\coordinate (AA) at (7,3/4*4);
\coordinate (AB) at (-1,3/4*-2);
\coordinate (AC) at (0,3/4*-2);
\coordinate (AD) at (1,3/4*-2);
\coordinate (AE) at (2,3/4*-2);
\coordinate (AF) at (3,3/4*-2);
\coordinate (AG) at (4,3/4*-2);
\coordinate (AH) at (5,3/4*-2);
\coordinate (AI) at (6,3/4*-2);
\coordinate (AJ) at (7,3/4*-2);

\coordinate (AK) at (0, 16/5);
\coordinate (AL) at (0, -8/5);

\draw[ultramarine, thick] (AB)--(V);
\draw[ultramarine, thick] (AC)--(W);
\draw[ultramarine, thick] (AD)--(X);
\draw[ultramarine, thick] (AE)--(Y);
\draw[ultramarine, thick] (AF)--(Z);
\draw[ultramarine, thick] (AG)--(AA);
\draw[ultramarine, thick] (A)--(U);
\draw[ultramarine, thick] (J)--(T);
\draw[ultramarine, thick] (AH)--(R);
\draw[ultramarine, thick] (AI)--(I);
\draw[ultramarine, thick] (AB)--(AJ);
\draw[ultramarine, thick] (S)--(AA);
\draw[ultramarine, thick] (J)--(R);
\draw[ultramarine, thick] (A)--(I);
\draw[ultramarine, thick] (AB)--(S);
\draw[ultramarine, thick] (AC)--(T);
\draw[ultramarine, thick] (AD)--(U);
\draw[ultramarine, thick] (AE)--(V);
\draw[ultramarine, thick] (AF)--(W);
\draw[ultramarine, thick] (AG)--(X);
\draw[ultramarine, thick] (AH)--(Y);
\draw[ultramarine, thick] (AI)--(Z);
\draw[ultramarine, thick] (AJ)--(AA);


\filldraw[black] (A) circle (2pt) node[below] {};
\filldraw[black] (B) circle (2pt) node[below] {\phantom{00}0};
\filldraw[black] (C) circle (2pt) node[below] {\phantom{00}1};
\filldraw[black] (D) circle (2pt) node[below] {};
\filldraw[black] (E) circle (2pt) node[below] {};
\filldraw[black] (F) circle (2pt) node[below] {};
\filldraw[black] (G) circle (2pt) node[below] {};
\filldraw[black] (H) circle (2pt) node[below] {};
\filldraw[black] (I) circle (2pt) node[below] {};
\filldraw[black] (J) circle (2pt) node[below] {};
\filldraw[black] (K) circle (2pt) node[below] {};
\filldraw[black] (L) circle (2pt) node[below] {};
\filldraw[black] (M) circle (2pt) node[below] {};
\filldraw[black] (N) circle (2pt) node[below] {};
\filldraw[black] (O) circle (2pt) node[below] {};
\filldraw[black] (P) circle (2pt) node[below] {};
\filldraw[black] (Q) circle (2pt) node[below] {};
\filldraw[black] (R) circle (2pt) node[below] {};
\filldraw[black] (S) circle (2pt) node[below] {};
\filldraw[black] (T) circle (2pt) node[below] {};
\filldraw[black] (U) circle (2pt) node[below] {};
\filldraw[black] (V) circle (2pt) node[below] {};
\filldraw[black] (W) circle (2pt) node[below] {};
\filldraw[black] (X) circle (2pt) node[below] {};
\filldraw[black] (Y) circle (2pt) node[below] {};
\filldraw[black] (Z) circle (2pt) node[below] {};
\filldraw[black] (AA) circle (2pt) node[below] {};
\filldraw[black] (AB) circle (2pt) node[below] {};
\filldraw[black] (AC) circle (2pt) node[below] {};
\filldraw[black] (AD) circle (2pt) node[below] {};
\filldraw[black] (AE) circle (2pt) node[below] {};
\filldraw[black] (AF) circle (2pt) node[below] {};
\filldraw[black] (AG) circle (2pt) node[below] {};
\filldraw[black] (AH) circle (2pt) node[below] {};
\filldraw[black] (AI) circle (2pt) node[below] {};
\filldraw[black] (AJ) circle (2pt) node[below] {};




\end{tikzpicture}
\label{scalene_ex}
\caption{The scalene triangles of $S(U)$ correspond to the wallpaper group $p2$.}
\end{figure}

\end{example}

\begin{example} \label{cmm} $U=\{0, \frac{\pi}{4}, \frac{\pi}{2}\}$

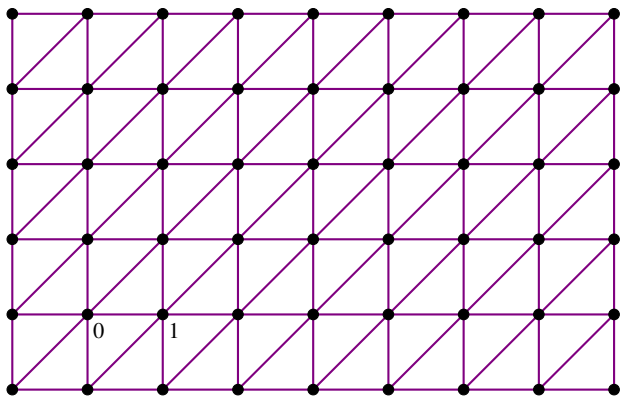
\begin{figure}[h!t]
\centering
\begin{tikzpicture}

\coordinate (A) at (-1,0);
\coordinate (B) at (0,0);
\coordinate (C) at (1,0);
\coordinate (D) at (2,0);
\coordinate (E) at (3,0);
\coordinate (F) at (4,0);
\coordinate (G) at (5,0);
\coordinate (H) at (6,0);
\coordinate (I) at (7,0);
\coordinate (J) at (-1,1);
\coordinate (K) at (0,1);
\coordinate (L) at (1,1);
\coordinate (M) at (2,1);
\coordinate (N) at (3,1);
\coordinate (O) at (4,1);
\coordinate (P) at (5,1);
\coordinate (Q) at (6,1);
\coordinate (R) at (7,1);
\coordinate (S) at (-1,2);
\coordinate (T) at (0,2);
\coordinate (U) at (1,2);
\coordinate (V) at (2,2);
\coordinate (W) at (3,2);
\coordinate (X) at (4,2);
\coordinate (Y) at (5,2);
\coordinate (Z) at (6,2);
\coordinate (AA) at (7,2);
\coordinate (AB) at (-1,3);
\coordinate (AC) at (0,3);
\coordinate (AD) at (1,3);
\coordinate (AE) at (2,3);
\coordinate (AF) at (3,3);
\coordinate (AG) at (4,3);
\coordinate (AH) at (5,3);
\coordinate (AI) at (6,3);
\coordinate (AJ) at (7,3);
\coordinate (AK) at (-1,4);
\coordinate (AL) at (0,4);
\coordinate (AM) at (1,4);
\coordinate (AN) at (2,4);
\coordinate (AO) at (3,4);
\coordinate (AP) at (4,4);
\coordinate (AQ) at (5,4);
\coordinate (AR) at (6,4);
\coordinate (AS) at (7,4);
\coordinate (AT) at (-1,-1);
\coordinate (AU) at (0,-1);
\coordinate (AV) at (1,-1);
\coordinate (AW) at (2,-1);
\coordinate (AX) at (3,-1);
\coordinate (AY) at (4,-1);
\coordinate (AZ) at (5,-1);
\coordinate (BA) at (6,-1);
\coordinate (BB) at (7,-1);

\draw[violet, thick] (AB)--(AL);
\draw[violet, thick] (S)--(AM);
\draw[violet, thick] (J)--(AN);
\draw[violet, thick] (A)--(AO);
\draw[violet, thick] (AT)--(AP);
\draw[violet, thick] (AU)--(AQ);
\draw[violet, thick] (AV)--(AR);
\draw[violet, thick] (AW)--(AS);
\draw[violet, thick] (AX)--(AJ);
\draw[violet, thick] (AY)--(AA);
\draw[violet, thick] (AZ)--(R);
\draw[violet, thick] (BA)--(I);

\draw[violet, thick] (AK)--(AS);
\draw[violet, thick] (AB)--(AJ);
\draw[violet, thick] (S)--(AA);
\draw[violet, thick] (J)--(R);
\draw[violet, thick] (A)--(I);
\draw[violet, thick] (AT)--(BB);

\draw[violet, thick] (AT)--(AK);
\draw[violet, thick] (AU)--(AL);
\draw[violet, thick] (AV)--(AM);
\draw[violet, thick] (AW)--(AN);
\draw[violet, thick] (AX)--(AO);
\draw[violet, thick] (AY)--(AP);
\draw[violet, thick] (AZ)--(AQ);
\draw[violet, thick] (BA)--(AR);
\draw[violet, thick] (BB)--(AS);


\filldraw[black] (A) circle (2pt) node[below] {};
\filldraw[black] (B) circle (2pt) node[below] {\phantom{00}0};
\filldraw[black] (C) circle (2pt) node[below] {\phantom{00}1};
\filldraw[black] (D) circle (2pt) node[below] {};
\filldraw[black] (E) circle (2pt) node[below] {};
\filldraw[black] (F) circle (2pt) node[below] {};
\filldraw[black] (G) circle (2pt) node[below] {};
\filldraw[black] (H) circle (2pt) node[below] {};
\filldraw[black] (I) circle (2pt) node[below] {};
\filldraw[black] (J) circle (2pt) node[below] {};
\filldraw[black] (K) circle (2pt) node[below] {};
\filldraw[black] (L) circle (2pt) node[below] {};
\filldraw[black] (M) circle (2pt) node[below] {};
\filldraw[black] (N) circle (2pt) node[below] {};
\filldraw[black] (O) circle (2pt) node[below] {};
\filldraw[black] (P) circle (2pt) node[below] {};
\filldraw[black] (Q) circle (2pt) node[below] {};
\filldraw[black] (R) circle (2pt) node[below] {};
\filldraw[black] (S) circle (2pt) node[below] {};
\filldraw[black] (T) circle (2pt) node[below] {};
\filldraw[black] (U) circle (2pt) node[below] {};
\filldraw[black] (V) circle (2pt) node[below] {};
\filldraw[black] (W) circle (2pt) node[below] {};
\filldraw[black] (X) circle (2pt) node[below] {};
\filldraw[black] (Y) circle (2pt) node[below] {};
\filldraw[black] (Z) circle (2pt) node[below] {};
\filldraw[black] (AA) circle (2pt) node[below] {};
\filldraw[black] (AB) circle (2pt) node[below] {};
\filldraw[black] (AC) circle (2pt) node[below] {};
\filldraw[black] (AD) circle (2pt) node[below] {};
\filldraw[black] (AE) circle (2pt) node[below] {};
\filldraw[black] (AF) circle (2pt) node[below] {};
\filldraw[black] (AG) circle (2pt) node[below] {};
\filldraw[black] (AH) circle (2pt) node[below] {};
\filldraw[black] (AI) circle (2pt) node[below] {};
\filldraw[black] (AJ) circle (2pt) node[below] {};
\filldraw[black] (AK) circle (2pt) node[below] {};
\filldraw[black] (AL) circle (2pt) node[below] {};
\filldraw[black] (AM) circle (2pt) node[below] {};
\filldraw[black] (AN) circle (2pt) node[below] {};
\filldraw[black] (AO) circle (2pt) node[below] {};
\filldraw[black] (AP) circle (2pt) node[below] {};
\filldraw[black] (AQ) circle (2pt) node[below] {};
\filldraw[black] (AR) circle (2pt) node[below] {};
\filldraw[black] (AS) circle (2pt) node[below] {};
\filldraw[black] (AT) circle (2pt) node[below] {};
\filldraw[black] (AU) circle (2pt) node[below] {};
\filldraw[black] (AV) circle (2pt) node[below] {};
\filldraw[black] (AW) circle (2pt) node[below] {};
\filldraw[black] (AX) circle (2pt) node[below] {};
\filldraw[black] (AY) circle (2pt) node[below] {};
\filldraw[black] (AZ) circle (2pt) node[below] {};
\filldraw[black] (BA) circle (2pt) node[below] {};
\filldraw[black] (BB) circle (2pt) node[below] {};




\end{tikzpicture}
\label{isoc_ex}
\caption{The isosceles triangles of $S(U)$ correspond to the wallpaper group $cmm$.}
\end{figure}

\end{example}

\begin{example} \label{p6m} $U=\{0, \frac{\pi}{3}, \frac{2\pi}{3}\}$

\begin{figure}[h!t]
\centering
\begin{tikzpicture}

\coordinate (AZ) at (-1,2*1.73205080757);
\coordinate (BA) at (-1,1.73205080757);
\coordinate (BB) at (-1/2,3*1.73205080757/2);
\coordinate (BC) at (0,2*1.73205080757);
\coordinate (BD) at (-1,0);
\coordinate (BE) at (-1/2,1.73205080757/2);
\coordinate (BF) at (0,1.73205080757);
\coordinate (BG) at (1/2,3*1.73205080757/2);
\coordinate (BH) at (1,2*1.73205080757);
\coordinate (B) at (-1/2,-1.73205080757/2);
\coordinate (C) at (0,0);
\coordinate (D) at (1/2,1.73205080757/2);
\coordinate (E) at (1,1.73205080757);
\coordinate (F) at (3/2,3*1.73205080757/2);
\coordinate (G) at (2,2*1.73205080757);
\coordinate (I) at (1/2,-1.73205080757/2);
\coordinate (J) at (1,0);
\coordinate (K) at (3/2,1.73205080757/2);
\coordinate (L) at (2,1.73205080757);
\coordinate (M) at (5/2,3*1.73205080757/2);
\coordinate (N) at (3,2*1.73205080757);
\coordinate (P) at (3/2,-1.73205080757/2);
\coordinate (Q) at (2,0);
\coordinate (R) at (5/2,1.73205080757/2);
\coordinate (S) at (3,1.73205080757);
\coordinate (T) at (7/2,3*1.73205080757/2);
\coordinate (U) at (4,2*1.73205080757);
\coordinate (W) at (5/2,-1.73205080757/2);
\coordinate (X) at (3,0);
\coordinate (Y) at (7/2,1.73205080757/2);
\coordinate (Z) at (4,1.73205080757);
\coordinate (AA) at (9/2,3*1.73205080757/2);
\coordinate (AB) at (5,2*1.73205080757);
\coordinate (AD) at (7/2,-1.73205080757/2);
\coordinate (AE) at (4,0);
\coordinate (AF) at (9/2,1.73205080757/2);
\coordinate (AG) at (5,1.73205080757);
\coordinate (AH) at (11/2,3*1.73205080757/2);
\coordinate (AI) at (6,2*1.73205080757);
\coordinate (AK) at (9/2,-1.73205080757/2);
\coordinate (AL) at (5,0);
\coordinate (AM) at (11/2,1.73205080757/2);
\coordinate (AN) at (6,1.73205080757);
\coordinate (AO) at (13/2,3*1.73205080757/2);
\coordinate (AP) at (7,2*1.73205080757);
\coordinate (AR) at (11/2,-1.73205080757/2);
\coordinate (AS) at (6,0);
\coordinate (AT) at (13/2,1.73205080757/2);
\coordinate (AU) at (7,1.73205080757);
\coordinate (AW) at (13/2,-1.73205080757/2);
\coordinate (AX) at (7,0);
\coordinate (BI) at (7,3*1.73205080757/2);
\coordinate (BJ) at (-1,3*1.73205080757/2);
\coordinate (BK) at (7,1.73205080757/2);
\coordinate (BL) at (-1,1.73205080757/2);
\coordinate (BM) at (7,-1.73205080757/2);
\coordinate (BN) at (-1,-1.73205080757/2);

\draw[limegreen, thick] (BA)--(BC);
\draw[limegreen, thick] (BD)--(BH);
\draw[limegreen, thick] (B)--(G);
\draw[limegreen, thick] (I)--(N);
\draw[limegreen, thick] (P)--(U);
\draw[limegreen, thick] (W)--(AB);
\draw[limegreen, thick] (AD)--(AI);
\draw[limegreen, thick] (AK)--(AP);
\draw[limegreen, thick] (AR)--(AU);
\draw[limegreen, thick] (AW)--(AX);
\draw[limegreen, thick] (BA)--(I);
\draw[limegreen, thick] (BD)--(B);
\draw[limegreen, thick] (AZ)--(P);
\draw[limegreen, thick] (BC)--(W);
\draw[limegreen, thick] (BH)--(AD);
\draw[limegreen, thick] (G)--(AK);
\draw[limegreen, thick] (N)--(AR);
\draw[limegreen, thick] (U)--(AW);
\draw[limegreen, thick] (AB)--(AX);
\draw[limegreen, thick] (AI)--(AU);
\draw[limegreen, thick] (BD)--(AX);
\draw[limegreen, thick] (AZ)--(AP);
\draw[limegreen, thick] (BA)--(AU);
\draw[limegreen, thick] (BM)--(BN);
\draw[limegreen, thick] (BK)--(BL);
\draw[limegreen, thick] (BI)--(BJ);


\filldraw[black] (B) circle (2pt) node[below] {};
\filldraw[black] (C) circle (2pt) node[below] {\phantom{0000}0};
\filldraw[black] (D) circle (2pt) node[below] {};
\filldraw[black] (E) circle (2pt) node[below] {};
\filldraw[black] (F) circle (2pt) node[below] {};
\filldraw[black] (G) circle (2pt) node[below] {};
\filldraw[black] (I) circle (2pt) node[below] {};
\filldraw[black] (J) circle (2pt) node[below] {\phantom{0000}1};
\filldraw[black] (K) circle (2pt) node[below] {};
\filldraw[black] (L) circle (2pt) node[below] {};
\filldraw[black] (M) circle (2pt) node[below] {};
\filldraw[black] (N) circle (2pt) node[below] {};
\filldraw[black] (P) circle (2pt) node[below] {};
\filldraw[black] (Q) circle (2pt) node[below] {};
\filldraw[black] (R) circle (2pt) node[below] {};
\filldraw[black] (S) circle (2pt) node[below] {};
\filldraw[black] (T) circle (2pt) node[below] {};
\filldraw[black] (U) circle (2pt) node[below] {};
\filldraw[black] (W) circle (2pt) node[below] {};
\filldraw[black] (X) circle (2pt) node[below] {};
\filldraw[black] (Y) circle (2pt) node[below] {};
\filldraw[black] (Z) circle (2pt) node[below] {};
\filldraw[black] (AA) circle (2pt) node[below] {};
\filldraw[black] (AB) circle (2pt) node[below] {};
\filldraw[black] (AD) circle (2pt) node[below] {};
\filldraw[black] (AE) circle (2pt) node[below] {};
\filldraw[black] (AF) circle (2pt) node[below] {};
\filldraw[black] (AG) circle (2pt) node[below] {};
\filldraw[black] (AH) circle (2pt) node[below] {};
\filldraw[black] (AI) circle (2pt) node[below] {};
\filldraw[black] (AK) circle (2pt) node[below] {};
\filldraw[black] (AL) circle (2pt) node[below] {};
\filldraw[black] (AM) circle (2pt) node[below] {};
\filldraw[black] (AN) circle (2pt) node[below] {};
\filldraw[black] (AO) circle (2pt) node[below] {};
\filldraw[black] (AP) circle (2pt) node[below] {};
\filldraw[black] (AR) circle (2pt) node[below] {};
\filldraw[black] (AS) circle (2pt) node[below] {};
\filldraw[black] (AT) circle (2pt) node[below] {};
\filldraw[black] (AU) circle (2pt) node[below] {};
\filldraw[black] (AW) circle (2pt) node[below] {};
\filldraw[black] (AX) circle (2pt) node[below] {};
\filldraw[black] (AZ) circle (2pt) node[below] {};
\filldraw[black] (BA) circle (2pt) node[below] {};
\filldraw[black] (BB) circle (2pt) node[below] {};
\filldraw[black] (BC) circle (2pt) node[below] {};
\filldraw[black] (BD) circle (2pt) node[below] {};
\filldraw[black] (BE) circle (2pt) node[below] {};
\filldraw[black] (BF) circle (2pt) node[below] {};
\filldraw[black] (BG) circle (2pt) node[below] {};
\filldraw[black] (BH) circle (2pt) node[below] {};




\end{tikzpicture}
\label{eq_ex}
\caption{The equilateral triangles of $S(U)$ correspond to the wallpaper group $p6m$.}
\end{figure}
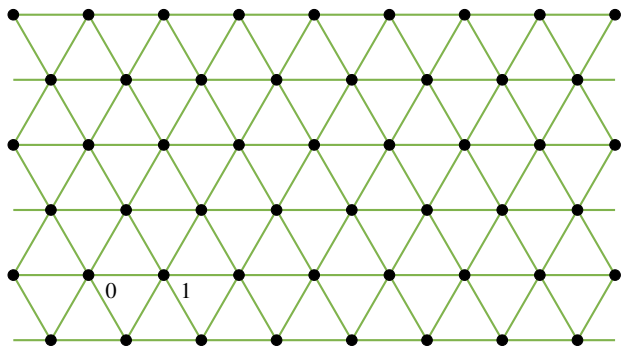

\end{example}

The following code was written in SageMath, and given an input set of three angles, it outputs the corresponding wallpaper group: either $p2$, $cmm$, or $p6m$.

\begin{sageblock}
def M(U):
    if len(U)>3:
        return(`error');
    if 0 not in U:
        return(`error');
    else: 
        U_ordered=[]; #orders the angles
        while len(U)>0:
            U_ordered.append(min(U));
            U.remove(min(U))
        alpha=U_ordered[1]
        gamma=pi-U_ordered[2]
        rho=U_ordered[2]-U_ordered[1]
        triangle_angles=[alpha,gamma,rho]
        a=0
        for i in range(0,2):
            for j in range(i+1,3):
                if triangle_angles[i]==triangle_angles[j]:
                    a=a+1
        if a==0:
            return(`p2')
        else:
            if a==1:
                return(`cmm')
            else:
                if a==3:
                    return(`p6m')
\end{sageblock}

\section{Obtaining other point groups}

Thus far, the three wallpaper groups obtained from origami constructions are those corresponding to origami sets that are lattices (equivalently, constructed from exactly three angles). If the angle set contains more than three angles, then the translation subgroup of the symmetry group is not a lattice and so the symmetry group can no longer be classified as a wallpaper group. In this case, we instead focus on just the corresponding point group of reflections and rotations, which will be denoted by $P(U)$ throughout this section. The group $P(U)$ contains a given reflection and rotation if and only if both $U$ and $M(U)$ are closed under the corresponding reflection or rotation. In this section, we determine the point groups obtained via origami constructions when the angle set contains more than three angles. We begin with criteria to obtain reflections and rotations.

\begin{proposition} \label{rot-ref}
Let $U$ be a set of angles, where $U$ is considered modulo $\pi$. Then,

\begin{enumerate}
\item  The group $P(U)$ contains rotations by an angle of $\theta$ if and only if for every $\alpha \in U$, we also have $\alpha+\theta \in U$ and $\cos(\theta)+i\sin(\theta)\in M(U)$.

\item The group $P(U)$ contains reflections across the angle $\theta$ if and only if for every $\alpha \in U$, we also have $2\theta-\alpha \in U$ and $\cos(2\theta)+i\sin(2\theta) \in M(U)$.
\end{enumerate}
\end{proposition}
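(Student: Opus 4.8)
The plan is to reduce each biconditional to the criterion stated just before the proposition: that $P(U)$ contains a given rotation or reflection if and only if both $U$ and $M(U)$ are closed under it. For rotations, closure of $U$ under rotation by $\theta$ is literally the condition $\alpha+\theta\in U$ for every $\alpha\in U$, and for reflections, Lemma \ref{reflect} identifies closure of $U$ under reflection across $\theta$ with the condition $2\theta-\alpha\in U$ for every $\alpha\in U$. Thus the whole content of the proposition is to show that, once $U$ is closed, closure of the point set $M(U)$ is equivalent to the single membership $\cos\theta+i\sin\theta\in M(U)$ (resp.\ $\cos 2\theta+i\sin 2\theta\in M(U)$).

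The main tool I would isolate first is that every isometry intertwines with the intersection operation. Writing $g$ for rotation by $\theta$ about the origin (so $g(z)=(\cos\theta+i\sin\theta)z$) or reflection across the line at angle $\theta$ (so $g(z)=(\cos 2\theta+i\sin 2\theta)\bar z$), the map $g$ sends the line $L_{p,\alpha}$ to the line through $g(p)$ at angle $g\cdot\alpha$, where $g\cdot\alpha=\alpha+\theta$ for the rotation and $g\cdot\alpha=2\theta-\alpha$ for the reflection (the latter by Lemma \ref{reflect}). Since $g$ preserves incidence,
\[
g\big([\![p,q]\!]_{\alpha,\beta}\big)=[\![g(p),g(q)]\!]_{g\cdot\alpha,\,g\cdot\beta}.
\]
Applying this level by level to the iterative definition of $M(U)$, and using that $U$ is closed under the bijection $\alpha\mapsto g\cdot\alpha$ of the finite set $U$, I obtain that $g$ carries the origami set built from seeds $0,1$ onto the origami set built from seeds $g(0)=0$ and $g(1)$ with the same angle set $U$. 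Here $g(1)=\cos\theta+i\sin\theta$ for the rotation and $g(1)=\cos 2\theta+i\sin 2\theta$ for the reflection.

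The forward direction is then immediate: if $P(U)$ contains the symmetry, then $M(U)$ is closed under $g$, and since the seed point $1$ lies in $M(U)$ (it equals $[\![0,1]\!]_{0,\beta}$ for any $\beta\neq 0$, using $0\in U$), its image $g(1)$ is the required element. For the converse I would use the self-similarity of the construction: because the levels $M_k(U)$ are increasing and $M(U)$ contains every intersection of lines through points it already contains, the origami set generated by any two of its own points is again contained in $M(U)$. Granting $g(1)\in M(U)$ together with $0\in M(U)$, the displayed identity shows $g(M(U))$ equals the origami set on seeds $0,g(1)$, which is therefore contained in $M(U)$. To upgrade this containment to genuine closure, I note that closure of the finite set $U$ forces $\theta$ to be a rational multiple of $\pi$, so the rotation $g$ has finite order (and a reflection is its own inverse); iterating $g(M(U))\subseteq M(U)$ up to the order of $g$ then yields $g(M(U))=M(U)$.

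The step I expect to be the crux is the converse, specifically the self-similarity observation that a single new point $g(1)\in M(U)$ propagates, through the intertwining identity, to closure of the entire set. The identity $g([\![p,q]\!]_{\alpha,\beta})=[\![g(p),g(q)]\!]_{g\cdot\alpha,\,g\cdot\beta}$ does the bookkeeping, but one must check that the angle-set closure $U\mapsto U$ holds in both directions (automatic since $U$ is finite) and that containment can be promoted to equality; the finite-order argument handles the latter cleanly and is the only place where finiteness of $U$ is genuinely needed.
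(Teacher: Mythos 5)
Your proposal is correct and follows essentially the same route as the paper: both directions rest on the observation that the isometry transports the iterative construction of $M(U)$ to the same construction with the (closed) angle set $U$ and seeds $0$ and $g(1)$, so that closure of $M(U)$ reduces to the single membership $g(1)\in M(U)$. Your intertwining identity $g([\![p,q]\!]_{\alpha,\beta})=[\![g(p),g(q)]\!]_{g\cdot\alpha,\,g\cdot\beta}$ and the finite-order upgrade from $g(M(U))\subseteq M(U)$ to equality merely make explicit two steps that the paper's proof leaves implicit ("the same steps used to construct $p$ but replacing every angle $\alpha$ by $\alpha+\theta$ and the seed point $1$ by $\cos(\theta)+i\sin(\theta)$"); the underlying argument is the same.
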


\begin{proof}
To prove (1), first suppose that $P(U)$ contains rotations by the angle $\theta$. Under a rotation by $\theta$, an angle $\alpha$ is mapped to $\alpha + \theta$, and the seed point 1 is mapped to $\cos(\theta) + i \sin (\theta)$. Therefore, $\alpha+\theta\in U$ and $\cos(\theta)+i\sin(\theta) \in M(U)$.

Conversely, suppose for every $\alpha \in U$, we also have $\alpha + \theta \in U$ and $\cos(\theta) + i\sin(\theta) \in M(U)$. Therefore, the angle set is closed under rotations by $\theta$. Given any point $p \in M(U)$, the image of $p$ under a rotation by $\theta$ is obtained by the same steps used to construct $p$ but replacing every angle $\alpha$ by $\alpha + \theta$ and replacing the seed point 1 by $\cos(\theta) + i\sin(\theta)$. Therefore, $M(U)$ is also closed under rotations by $\theta$ and (1) holds.

To show (2), suppose first that $P(U)$ contains reflections across the angle $\theta$. Under a reflection across $\theta$, an angle $\alpha$ is mapped to $ 2\theta - \alpha$ by Lemma \ref{reflect}, and the seed point 1 is mapped to $\cos(2\theta) + i \sin (2\theta)$. Therefore, $2\theta - \alpha \in U$ and $\cos(2\theta)+i\sin(2\theta) \in M(U)$.

Conversely, suppose that for every $\alpha \in U$, $2\theta-\alpha \in U$ and $\cos(2\theta)+i\sin(2\theta)\in M(U)$. A reflection across an angle $\theta$ will map an angle $\alpha$ to $2\theta - \alpha$, so $U$ is closed under reflections across $\theta$. Given any point $p \in M(U)$, the image of $p$ under a reflection across $\theta$ is obtained by the same steps used to construct $p$ but replacing every angle $\alpha$ by $2\theta - \alpha$ and replacing the seed point 1 by $\cos(2\theta) + i\sin(2\theta)$. Therefore, $M(U)$ is also closed under reflections across $\theta$ and (2) holds. 

\end{proof}

Now that the criteria for rotations and reflections have been identified, they can be applied to determine the group structure of $P(U)$.

 \begin{theorem} \label{mainthm2}

Let $U$ be a set of angles, where $U$ is considered modulo $\pi$. Then,

 \begin{enumerate}

 \item If the highest rotation order is $n > 2$, then $P(U)$ is either cyclic of order $n$ if there are no reflections, or isomorphic to the dihedral group of a regular polygon with $n$ vertices if there are reflections.

 \item If the highest rotation order is $n=2$, then $P(U)$ is either cyclic of order 2 if there are no reflections, or isomorphic $\Z/2\Z \times \Z/2\Z$ if there are reflections. 

 \end{enumerate}
 \end{theorem}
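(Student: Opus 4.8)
The plan is to reduce the statement to the classical classification of finite subgroups of $O(2)$, using Proposition \ref{rot-ref} to guarantee that $P(U)$ genuinely is such a subgroup. First I would note that every element of $P(U)$ is either a rotation about the origin or a reflection across a line through the origin, so $P(U)\leq O(2)$, and by Proposition \ref{rot-ref} membership is controlled by closure of the finite direction set $U$ (taken modulo $\pi$) under the transformation together with a condition on $M(U)$. I would then check that $P(U)$ is \emph{finite}. Let $R\leq P(U)$ be the subgroup of rotations. A rotation by $\theta$ induces the permutation $\alpha\mapsto\alpha+\theta$ of $U$, and the resulting homomorphism $R\to\mathrm{Sym}(U)$ has kernel $\{\mathrm{id},\ \text{negation}\}$ (negation is always present in an origami structure, as established in the paper's setup, and rotations by $\theta$ and $\theta+\pi$ are therefore present together). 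Since $\mathrm{Sym}(U)$ is finite, $R$ is finite.

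Next I would identify $R$ explicitly. Rotations compose by adding angles, so $R$ is a finite subgroup of the circle group $SO(2)$, and any such subgroup is cyclic: taking $\rho$ to be the rotation by the smallest positive angle occurring in $R$, a division-with-remainder argument forces every rotation angle to be an integer multiple of that smallest angle. Because the highest rotation order is $n$, the smallest angle is $2\pi/n$, so $R=\langle\rho\rangle\cong C_n$. If $P(U)$ contains no reflections, then $P(U)=R\cong C_n$, which settles the reflection-free half of both parts simultaneously.

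If $P(U)$ does contain a reflection, I would fix one such reflection $\sigma$ and invoke the standard composition rules in $O(2)$: the product of two reflections is a rotation, the product of a reflection with a rotation is a reflection, and conjugation satisfies $\sigma\rho\sigma^{-1}=\rho^{-1}$. From the first rule, any reflection $\sigma'$ satisfies $\sigma'\sigma^{-1}\in R$, hence $\sigma'=\rho^{k}\sigma$ for some $k$; thus the reflections form the single coset $R\sigma$, giving $|P(U)|=2n$ and the presentation $\langle\rho,\sigma\mid\rho^{n}=\sigma^{2}=1,\ \sigma\rho\sigma=\rho^{-1}\rangle$, which is the dihedral group $D_n$ of the regular $n$-gon. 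For part (2), where $n=2$, the subgroup $R=\{1,\rho\}$ is generated by negation and $D_2$ is generated by two commuting involutions, so it is the Klein four-group $\Z/2\Z\times\Z/2\Z$; this is precisely the case distinction the statement draws between $n>2$ (nonabelian $D_n$) and $n=2$ (abelian).

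I expect the main obstacle to be bookkeeping rather than a deep idea: once $P(U)\leq O(2)$ is in hand via Proposition \ref{rot-ref}, the remaining content is pure group theory. The real care is needed in two places: (i) the finiteness argument, where one must keep the mod-$\pi$ action on directions consistent with the mod-$2\pi$ action on points (this is exactly where the always-present negation is used to pair up rotations by $\theta$ and $\theta+\pi$); and (ii) verifying that \emph{every} reflection lies in the single coset $R\sigma$, so that no element outside $D_n$ can sneak in and the order is exactly $2n$.
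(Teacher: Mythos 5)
Your proposal is correct, and it reaches the conclusion by the same underlying route as the paper---the composition rules for rotations and reflections in $O(2)$---but it carries the argument substantially further than the paper does. The paper's proof only observes that a reflection composed with a rotation is a reflection and that two reflections compose to a rotation, concludes that the reflection axes (if any) are spaced by half the smallest rotation angle, and then asserts the cyclic/dihedral/Klein-four identifications; indeed the paper explicitly remarks afterward that writing $P(U)$ by generators and relations ``would provide a more rigorous argument for the dihedral group case'' and leaves this as an exercise. You have essentially done that exercise: you justify that $P(U)$ is a finite subgroup of $O(2)$ (via the homomorphism from the rotation subgroup $R$ to $\mathrm{Sym}(U)$, whose kernel is $\{\mathrm{id},-1\}$ because negation is always present and $U$ is finite---a step the paper omits entirely), you prove $R$ is cyclic of order $n$ generated by the rotation through $2\pi/n$, and you show the reflections form the single coset $R\sigma$, yielding the presentation $\langle\rho,\sigma\mid\rho^{n}=\sigma^{2}=1,\ \sigma\rho\sigma=\rho^{-1}\rangle$ and hence $|P(U)|=2n$. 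The only point worth keeping an eye on is the one you already flag: the action $\alpha\mapsto\alpha+\theta$ on $U$ is taken modulo $\pi$ while rotations act modulo $2\pi$, which is exactly why the kernel has order two rather than one; Proposition \ref{rot-ref} guarantees the action is well defined, and finiteness of $U$ makes it a permutation. Nothing in your argument fails, and it fills the rigor gap the paper acknowledges.
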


\begin{proof}
    
   Let $n$ be the highest rotation order in $P(U)$. Combining a rotation by an angle $\theta_1$ followed by a reflection across an angle $\theta_2$ yields another reflection across the angle $\theta_2+\frac{\theta_1}{2}$. Similarly, combining two reflections yields a rotation by twice the difference in the reflection angles. This means there must either be no reflection axes or there are reflection axes that differ by exactly half of the smallest rotation angle. 
   
   In the former case, the symmetry group $P(U)\cong \Z/n\Z$ is cyclic of order $n$ as it contains only rotations. In the latter case if $n >2$, these are exactly the same symmetries as the dihedral group of a regular $n$-gon, proving (1). If $n=2$, there are four symmetries: one rotation, two reflections, and the identity, all of which have order dividing 2. Therefore, $P(U) \cong \Z/2\Z \times \Z/2\Z$, showing (2). 
\end{proof}

It should be noted that the dihedral group can be represented by generators (a rotation and a reflection) and relations. As an exercise to the reader, one could describe the group $P(U)$ in this same way and by obtaining the same relations, this would provide a more rigorous argument for the dihedral group case in the proof above.

We now consider the specific case where $U=\{0, \frac{\pi}{n}, \frac{2\pi}{n}, \dots, \frac{(n-1)\pi}{n}\}$.

\begin{theorem} \label{nodd}
    If $n$ is odd and $U=\{0,\frac{\pi}{n}, \frac{2\pi}{n}, \dots, \frac{(n-1)\pi}{n}\}$, then $P(U)\cong D_{2n}$, the dihedral group of a regular $2n$-gon.
\end{theorem}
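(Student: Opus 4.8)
The plan is to apply Theorem \ref{mainthm2}(1), for which I must establish three things about $P(U)$: that it contains at least one reflection, that its highest rotation order equals $2n$, and that nothing larger occurs. The oddness of $n$ will enter in exactly one place, and I expect that to be the crux; everything else is bookkeeping with Proposition \ref{rot-ref}.

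First, reflections come essentially for free. Taking $\theta=0$ in Proposition \ref{rot-ref}(2), each angle $k\pi/n$ is sent to $-k\pi/n\equiv (n-k)\pi/n \pmod{\pi}$, which again lies in $U$, while the point condition reads $\cos 0 + i\sin 0 = 1 \in M(U)$, which holds since $1$ is a seed point. Hence reflection across the real axis is a symmetry, so $P(U)$ contains a reflection. Next I handle the rotation subgroup. A rotation by $\theta$ can be a symmetry only if it preserves the direction set $U$ modulo $\pi$; since $\{k\pi/n\}$ is the unique cyclic subgroup of order $n$ in $\mathbb{R}/\pi\mathbb{Z}$, translation by $\theta$ fixes it exactly when $\theta\in\frac{\pi}{n}\mathbb{Z}$. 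Inside $SO(2)=\mathbb{R}/2\pi\mathbb{Z}$ these angles form a cyclic group of order $2n$ generated by the rotation by $\pi/n$, so $2n$ is the largest rotation order that is even combinatorially possible. To see it is attained I will verify via Proposition \ref{rot-ref}(1) that the generating rotation by $\pi/n$ is genuinely a symmetry: the angle condition is immediate, so the entire content reduces to the point condition $\cos(\pi/n)+i\sin(\pi/n)=e^{i\pi/n}\in M(U)$. Once the generator is shown to be a symmetry, all $2n$ rotations follow because $P(U)$ is a group.

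The main obstacle, and the only step that uses oddness, is showing $e^{i\pi/n}\in M(U)$. My approach is to exhibit it directly as a single initial intersection $[\![0,1]\!]_{\pi/n,\,\phi}$. Writing $e^{i\pi/n}-1 = 2\sin(\tfrac{\pi}{2n})\,e^{\,i(\pi/2+\pi/(2n))}$, the line from the seed point $1$ to $e^{i\pi/n}$ has direction angle $\phi=\tfrac{(n+1)\pi}{2n}$. The decisive observation is that for \emph{odd} $n$ the quantity $\tfrac{n+1}{2}$ is an integer, so $\phi=\tfrac{n+1}{2}\cdot\tfrac{\pi}{n}$ is a multiple of $\pi/n$ and therefore lies in $U$; for even $n$ this fails, which is precisely why the hypothesis is needed. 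Consequently $e^{i\pi/n}$ is the intersection of the line through $0$ at angle $\pi/n$ with the line through $1$ at angle $\phi$, two allowed directions that are distinct modulo $\pi$ for $n\geq 3$, placing $e^{i\pi/n}\in M_1(U)\subseteq M(U)$.

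Assembling the pieces: the rotation subgroup is exactly the cyclic group of order $2n$, so the highest rotation order is $2n>2$, and $P(U)$ contains a reflection. By Theorem \ref{mainthm2}(1) it follows that $P(U)$ is isomorphic to the dihedral group of a regular $2n$-gon, i.e.\ $P(U)\cong D_{2n}$. I anticipate the only delicate calculation to be the direction-angle identity for $e^{i\pi/n}-1$ and the accompanying check that $\phi\neq\pi/n$ modulo $\pi$ (which forces $n\geq 3$, the degenerate case $n=1$ falling instead under Theorem \ref{mainthm2}(2)); the rest is a routine application of the two propositions.
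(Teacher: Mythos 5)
Your proposal is correct and takes essentially the same route as the paper: both hinge on exhibiting $e^{i\pi/n}$ as the initial intersection $[\![0,1]\!]_{\frac{\pi}{n},\,\frac{n+1}{2}\cdot\frac{\pi}{n}}$, with oddness of $n$ entering exactly to make $\frac{n+1}{2}$ an integer, and then invoking Proposition \ref{rot-ref} and Theorem \ref{mainthm2}. The only cosmetic differences are that you obtain a reflection across $\theta=0$ rather than $\theta=\frac{\pi}{2n}$, and you actually write out the direction-angle identity $e^{i\pi/n}-1=2\sin(\tfrac{\pi}{2n})e^{i(n+1)\pi/(2n)}$ that the paper leaves as ``various trigonometric identities.''
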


\begin{proof} 
Given the angle set, the only possible rotations can be by multiples of $\frac{\pi}{n}$, and the only reflection axes can be angles that differ by $\frac{\pi}{2n}$, so it suffices to show that $P(U)$ contains a rotation by an angle of $\frac{\pi}{n}$ and a reflection across the axis given by the angle $\frac{\pi}{2n}$. 
Note that $[\![0,1]\!]_{\frac{\pi}{n}, \left(\frac{n+1}{2}\right)\left(\frac{\pi}{n}\right)}=\cos(\frac{\pi}{n})+i\sin(\frac{\pi}{n})$ by applying the intersection formula and various trigonometric identities. By Proposition \ref{rot-ref}, $P(U)$ contains a rotation by an angle of $\frac{\pi}{n}$.

Since $\cos(\frac{\pi}{n})+i\sin(\frac{\pi}{n}) \in M(U)$, $P(U)$ contains a reflection across $\frac{\pi}{2n}$ by Proposition \ref{rot-ref}.
 
\end{proof}

\begin{example}  
    Let $U = \{0, \frac{\pi}{5}, \frac{2\pi}{5}, \frac{3\pi}{5}, \frac{4\pi}{5}\}$. Then, by Proposition \ref{rot-ref} using $n=5$, $P(U)$ should be isomorphic to $D_{10}$. A rotation by $\frac{\pi}{5}$ maps $1$ to $\cos(\frac{\pi}{5}) + i\sin(\frac{\pi}{5})$ as given by both the intersection formula,
    $[\![0,1]\!]_{\frac{\pi}{5}, \frac{3\pi}{5}}$, and the application of the rotation matrix to $1$. This same process can be applied another $9$ times over until the identity of $1$ is reached, showing that $P(U)$ has a rotational order of $10$. Similarly, a reflection can be made on the starting point over an axis of $\frac{\pi}{10}$ so that $1$ is mapped to $\cos(\frac{\pi}{5}) + i\sin(\frac{\pi}{5})$. Again, this is given by both the intersection formula as well as the reflection matrix applied to $1$. As the dihedral group of the regular decagon $D_{10}$ is generated by a rotation of order 10 and a reflection, it is seen that $P(U)$ is isomorphic to $D_{10}$. 

    \begin{figure}[h!t]
    \centering
    \includegraphics[scale = .425]{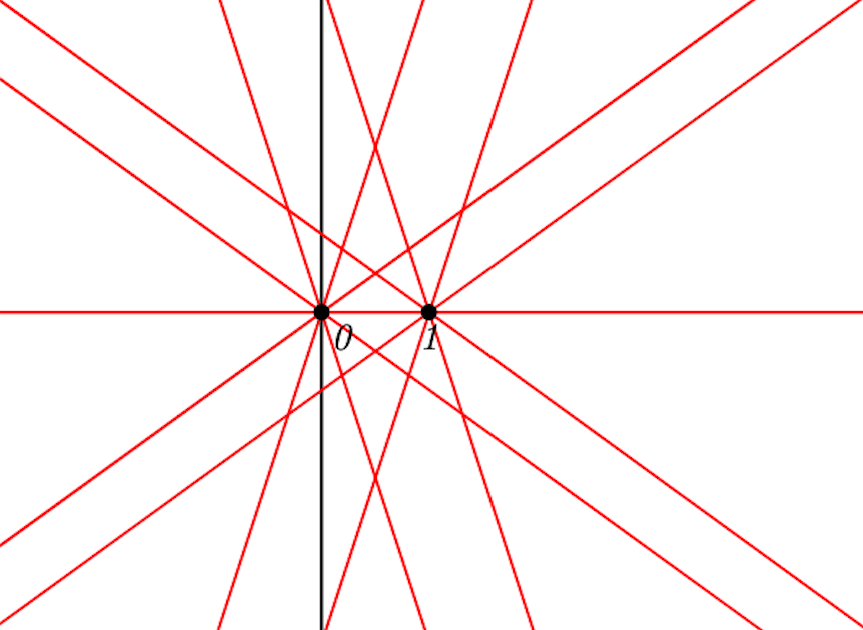}
    \includegraphics[scale = .7]{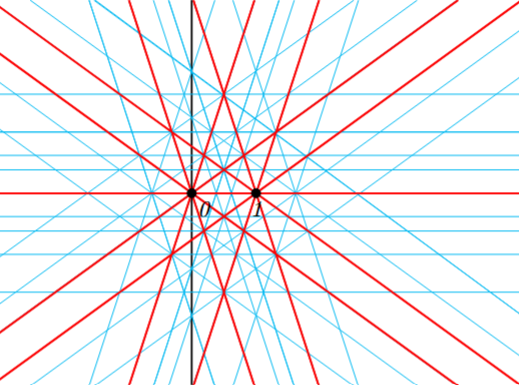}
    \caption{The images above show the first (left) and second (right) iterations of the intersection graph made up of the angles in $U=\{0, \frac{\pi}{5}, \frac{2\pi}{5}, \frac{3\pi}{5}, \frac{4\pi}{5}\}$. The red lines are obtained in the first iteration and the cyan lines are obtained in the second iteration.}
    \label{fig:enter-label}
\end{figure}

\end{example}

When $n$ is odd, ${n+1}$ is even and $\frac{n+1}{2}$ is an integer, which allows for rotations of order $2n$ in the previous theorem. When $n$ is even, there are no rotations of order $2n$ (i.e., a rotation by the angle $\frac{\pi}{n}$). If there were, then the image of the point 1 under the rotation would be $\cos(\frac{\pi}{n})+i\sin(\frac{\pi}{n})$. In the next series of lemmas, we will study the structure of $M(U)$ to see why this cannot be the case. The first lemma involves Chebyshev polynomials of the second kind which are defined as follows.

\begin{definition}

The $k$th \textbf{Chebyshev polynomial of the second kind}, denoted $U_k$ is defined so that $U_k(\cos(\theta))=\frac{\sin( (k+1)\theta)}{\sin(\theta)}$. 
\end{definition}

\begin{theorem} \cite{Pawelec}. The polynomials $U_k(x)$ follow the recursive formula $U_k(x)=2xU_{k-1}(x)-U_{k-2}(x)$, where $U_0(x)=1$ and $U_1(x)=2x$. 
\end{theorem}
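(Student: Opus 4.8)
The plan is to derive the recursion directly from the defining trigonometric identity $U_k(\cos\theta) = \frac{\sin((k+1)\theta)}{\sin\theta}$ by invoking a sum-to-product formula, and to dispatch the base cases by direct evaluation. First I would establish the base cases: setting $k=0$ gives $U_0(\cos\theta) = \frac{\sin\theta}{\sin\theta} = 1$, and setting $k=1$ gives $U_1(\cos\theta) = \frac{\sin(2\theta)}{\sin\theta} = \frac{2\sin\theta\cos\theta}{\sin\theta} = 2\cos\theta$, so that $U_0(x)=1$ and $U_1(x)=2x$ after substituting $x=\cos\theta$.

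The heart of the argument is the identity $\sin((k+1)\theta) + \sin((k-1)\theta) = 2\sin(k\theta)\cos\theta$, which follows from the sum-to-product formula $\sin A + \sin B = 2\sin\left(\frac{A+B}{2}\right)\cos\left(\frac{A-B}{2}\right)$ with $A = (k+1)\theta$ and $B = (k-1)\theta$. Rearranging and dividing through by $\sin\theta$ gives $\frac{\sin((k+1)\theta)}{\sin\theta} = 2\cos\theta \cdot \frac{\sin(k\theta)}{\sin\theta} - \frac{\sin((k-1)\theta)}{\sin\theta}$, which is exactly $U_k(\cos\theta) = 2\cos\theta\, U_{k-1}(\cos\theta) - U_{k-2}(\cos\theta)$ by the defining identity. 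Writing $x = \cos\theta$ yields the claimed recursion for every $x$ of the form $\cos\theta$, i.e. for all $x \in [-1,1]$.

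The one genuine subtlety — and the step I would flag as the main obstacle — is passing from an identity that holds only on $[-1,1]$ (and only when $\sin\theta \neq 0$) to an identity of polynomials valid for all $x$. Since both sides of $U_k(x) = 2x\,U_{k-1}(x) - U_{k-2}(x)$ are polynomials that agree on the infinite set $(-1,1)$, they must agree identically; the apparent singularity at $x = \pm 1$ (where $\sin\theta = 0$) is removable and causes no difficulty. A cleaner way to sidestep even this, which I would present as the rigorous version, is to turn the logic around: define polynomials $p_k$ by $p_0 = 1$, $p_1 = 2x$, and $p_k = 2x\,p_{k-1} - p_{k-2}$, which are manifestly polynomials, and then prove by induction on $k$ — using the same sum-to-product identity in the inductive step — that $p_k(\cos\theta) = \frac{\sin((k+1)\theta)}{\sin\theta}$. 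This simultaneously establishes that the $U_k$ of the definition exist as polynomials and that they obey the stated recursion.
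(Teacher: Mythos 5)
Your proof is correct and is the standard derivation: the base cases check out, the sum-to-product identity $\sin((k+1)\theta)+\sin((k-1)\theta)=2\sin(k\theta)\cos\theta$ gives the recursion, and you rightly handle the passage from an identity on $[-1,1]$ to a polynomial identity (indeed, your reversed version defining $p_k$ by the recursion is the cleanest way to justify that the $U_k$ of the paper's definition exist as polynomials at all). Note that the paper itself offers no proof of this statement --- it is quoted from the literature (\cite{Pawelec}) --- so there is no internal argument to compare against; your self-contained derivation is exactly what one would supply.
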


\begin{lemma} \label{cheby}
For all $k \geq 0$, $U_k(x)$ will have degree $k$ and consist of either only even or only odd powers of $x$. Moreover, $\frac{U_{k-2}(x)}{U_k(x)}$ is a rational function with integer coefficients consisting of only even powers of $x$.
\end{lemma}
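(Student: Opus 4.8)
The plan is to prove the first assertion by induction on $k$ using the defining recursion $U_k(x) = 2xU_{k-1}(x) - U_{k-2}(x)$, and then to obtain the statement about the quotient as a purely algebraic consequence of the parity established in the first part.

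For the first part I would run a two-step induction. The base cases $U_0(x)=1$ and $U_1(x)=2x$ have the claimed degrees $0$ and $1$, integer coefficients, and contain only powers of $x$ whose parity matches the index (even and odd, respectively). For the inductive step, assume $U_{k-1}$ has degree $k-1$ with every exponent $\equiv k-1 \pmod 2$ and $U_{k-2}$ has degree $k-2$ with every exponent $\equiv k-2 \equiv k \pmod 2$. Multiplying $U_{k-1}$ by $2x$ raises each exponent by one, so $2xU_{k-1}$ has all exponents $\equiv k \pmod 2$ and degree $k$; since $U_{k-2}$ already has exponents $\equiv k \pmod 2$, the difference $U_k = 2xU_{k-1}-U_{k-2}$ again involves only exponents of parity $k$. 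The degree-$k$ term arises solely from $2xU_{k-1}$ and cannot be cancelled by the lower-degree $U_{k-2}$, so $\deg U_k = k$; integer coefficients persist since the recursion uses only integer operations.

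For the second part I would invoke the parity just proved: both $U_{k-2}$ and $U_k$ have every exponent congruent to $k$ modulo $2$, so they share the same parity. If $k$ is even, then $U_{k-2}(x)=P(x^2)$ and $U_k(x)=Q(x^2)$ for some $P,Q\in\Z[x]$, and the quotient is $P(x^2)/Q(x^2)$, a rational function whose numerator and denominator lie in $\Z[x^2]$, i.e.\ involve only even powers of $x$. If $k$ is odd, each of $U_{k-2}$ and $U_k$ is $x$ times a polynomial in $x^2$; writing $U_{k-2}(x)=x\,P(x^2)$ and $U_k(x)=x\,Q(x^2)$, the common factor $x$ cancels and the quotient again equals $P(x^2)/Q(x^2)$. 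In either case the ratio is a rational function with integer-coefficient numerator and denominator in $\Z[x^2]$, which is exactly the desired conclusion; the quotient is well-defined because $U_k$ has degree $k$ and is thus not the zero polynomial.

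I do not anticipate any serious obstacle, as the result is essentially a bookkeeping induction. The one point needing care is the odd case, where one must explicitly factor out the common $x$ before asserting that the quotient involves only even powers of $x$; without this cancellation the numerator and denominator would each still carry an odd power of $x$. The only other thing to check is that the degree does not drop in the inductive step, and this follows at once from the degree gap between $2xU_{k-1}$ and $U_{k-2}$.
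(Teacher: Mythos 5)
Your induction for the first claim is correct and matches the paper's argument almost exactly: same base cases $U_0=1$, $U_1=2x$, same parity bookkeeping in the step $U_k = 2xU_{k-1}-U_{k-2}$, with the small improvement that you explicitly note the leading term of $2xU_{k-1}$ cannot be cancelled by the lower-degree $U_{k-2}$, so the degree genuinely equals $k$. The substantive difference is in the ``Moreover'' clause: the paper's proof stops after the induction and never actually argues that $U_{k-2}(x)/U_k(x)$ involves only even powers of $x$, whereas you supply the missing step --- when $k$ is even both polynomials lie in $\Z[x^2]$, and when $k$ is odd each is $x$ times a polynomial in $x^2$ and the common factor of $x$ cancels. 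That cancellation in the odd case is exactly the point the paper glosses over, and it is needed later in Lemma \ref{R}, so your version is the more complete one. The only remaining pedantic gap (present in both your write-up and the paper's) is that the quotient statement only makes literal sense for $k\geq 2$ unless one adopts the conventional extensions $U_{-1}=0$, $U_{-2}=-1$.
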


\begin{proof}
We will proceed by induction. When $k = 0$, $U_0(x) = x^0 = 1$, so $U_0$ has a degree 0. When $k = 1$, $U_1(x) = 2x$, showing $U_1$ has degree 1. 

Now, suppose $U_k$ and $U_{k - 1}$ each contain terms of only even or odd powers and have degrees of $k$ and $k - 1$, respectively. Note that $U_{k + 1}$ is defined as
$$U_{k+1} (x) = 2x \cdot U_k (x) - U_{k-1} (x).$$

In the case where $k$ is even, then both $k - 1$ and $k + 1$ are odd. Both $2x \cdot U_k (x)$ and $U_{k-1}(x)$ have terms of only odd powers of $x$ because $2x$ will alter $U_k (x)$ to have a degree of $k + 1$ which is odd, and $U_{k-1}(x)$ already has terms of only odd powers. Additionally, the difference between the two terms does not change the parity of the powers of the terms; so, $U_{k+1}$ has degree $k + 1$, an odd number such that the degree of each term in $U_{k+1}$ will be an odd power of $x$. 

In the case where $k$ is odd, the same argument holds, by reversing all parities.


Therefore, by induction, for all $k \geq 0$, $U_k(x)$ with degree $k$ will only have terms of $x$ with even or odd powers depending if $k$ is even or odd.

\end{proof}


\begin{lemma} \label{R}
Let $U=\{0, \frac{\pi}{n}, \frac{2\pi}{n}, \dots, \frac{(n-1)\pi}{n}\}$. Then, the set $P$ of projections contains only rational functions of $\cos^2(\frac{\pi}{n})$.
\end{lemma}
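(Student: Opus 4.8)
The plan is to derive an explicit closed form for an arbitrary element of $P$, rewrite it in terms of Chebyshev polynomials of the second kind evaluated at $\cos(\pi/n)$, and then read off from Lemma \ref{cheby} that only even powers of $\cos(\pi/n)$ survive.

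First I would compute a general projection. Writing an initial intersection as $q=[\![0,1]\!]_{\mu,\nu}$ with $\mu=a\pi/n$ and $\nu=b\pi/n$, solving Equation (\ref{intformula}) as in Section 2 gives $q=\frac{\sin\nu}{\sin(\nu-\mu)}\bigl(\cos\mu+i\sin\mu\bigr)$. Projecting $q$ onto the real axis along an angle $\lambda=c\pi/n\in U$ with $\lambda\neq 0$ means intersecting the real axis with the line through $q$ at angle $\lambda$; setting the imaginary part to zero and solving yields the real number
$$\mathrm{Re}(q)-\mathrm{Im}(q)\cot\lambda=\frac{\sin\nu\,\sin(\lambda-\mu)}{\sin(\nu-\mu)\,\sin\lambda}.$$
Thus, with $\theta=\pi/n$, every element of $P$ has the form $\dfrac{\sin(b\theta)\,\sin((c-a)\theta)}{\sin((b-a)\theta)\,\sin(c\theta)}$ for integers $a,b,c$.

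Next I would apply the defining identity $U_{m-1}(\cos\theta)=\sin(m\theta)/\sin\theta$ to each sine. Writing $x=\cos(\pi/n)$, the two factors of $\sin\theta$ in the numerator cancel the two in the denominator, and the projection becomes the ratio of Chebyshev polynomials
$$\frac{U_{b-1}(x)\,U_{c-a-1}(x)}{U_{b-a-1}(x)\,U_{c-1}(x)}.$$
Negative or zero indices cause no trouble: $\sin(-m\theta)=-\sin(m\theta)$ only introduces overall signs and shifts the relevant index by an even amount, so the parity count below is unaffected.

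The decisive step, and the place where Lemma \ref{cheby} does the real work, is the parity bookkeeping. That lemma says $U_k(x)=x^{\,k\bmod 2}\,p_k(x^2)$ for some polynomial $p_k$; equivalently, $U_k$ is an even function of $x$ exactly when $k$ is even and an odd function exactly when $k$ is odd. Since multiplication and division add these parities modulo $2$, the parity of the displayed ratio is
$$(b-1)+(c-a-1)+(b-a-1)+(c-1)\equiv 2b+2c-2a-4\equiv 0\pmod 2.$$
Hence the stray single factors of $x$ pair up and cancel between numerator and denominator, leaving a ratio of polynomials in $x^2$ with integer coefficients. Therefore every projection is a rational function of $x^2=\cos^2(\pi/n)$. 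I expect the only genuine obstacle to be this parity argument: one must verify that the odd-index Chebyshev factors pair off so their leftover powers of $x$ cancel, and that the sign changes coming from negative angle differences do not disturb the count; everything else is a direct computation together with the cited identity.
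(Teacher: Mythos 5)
Your proof is correct, and it reaches the Chebyshev parity argument by a genuinely different route than the paper. The paper first treats the single initial intersection $[\![0,1]\!]_{\frac{\pi}{n},\frac{2\pi}{n}}$, computing its projections $x_k$ geometrically via the law of sines and rewriting them as $x_k=1+\frac{U_{k-2}(\cos(\pi/n))}{U_k(\cos(\pi/n))}$ (which is where Lemma \ref{cheby} enters), and then transports the result to an arbitrary initial intersection by a similar-triangles argument, expressing the general projection as $\frac{x_k-x_i}{x_j-x_i}$. You instead solve the intersection and projection equations once and for all, obtaining the uniform closed form $\frac{\sin(b\theta)\sin((c-a)\theta)}{\sin((b-a)\theta)\sin(c\theta)}$ for every element of $P$, and then run a four-factor parity count on the Chebyshev indices; note that your formula with $(a,b,c)=(1,2,k+1)$ is exactly the paper's $x_k$, since $2\cos(\pi/n)=\sin(2\pi/n)/\sin(\pi/n)$. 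Your version buys uniformity — no special base point and no similar-triangles step — at the cost of having to track signs and index shifts when $c-a$ or $b-a$ is negative, which you handle correctly (the index shift is by $2(a-c)$, hence even, so the parity of each factor is unchanged and only an overall sign appears). Both arguments ultimately rest on the same content of Lemma \ref{cheby}, namely that $U_k$ has the parity of $k$, and your observation that equal parities of numerator and denominator force the stray factors of $x$ to cancel is exactly what is needed to conclude that the result is a rational function of $\cos^2(\frac{\pi}{n})$ with integer coefficients.
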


\begin{proof}
First, we show that all projections of $[\![0,1]\!]_{\frac{\pi}{n}, \frac{2\pi}{n}}$ onto the real axis, via angles in $U$ are rational functions of $\cos^2(\frac{\pi}{n}).$ Define $x_k$ to be the projection of the point $[\![0,1]\!]_{\frac{\pi}{n}, \frac{2\pi}{n}}$ onto the real axis, via the angle $\frac{(k+1)\pi}{n}$, as shown in Figure \ref{projections_tikz}. 
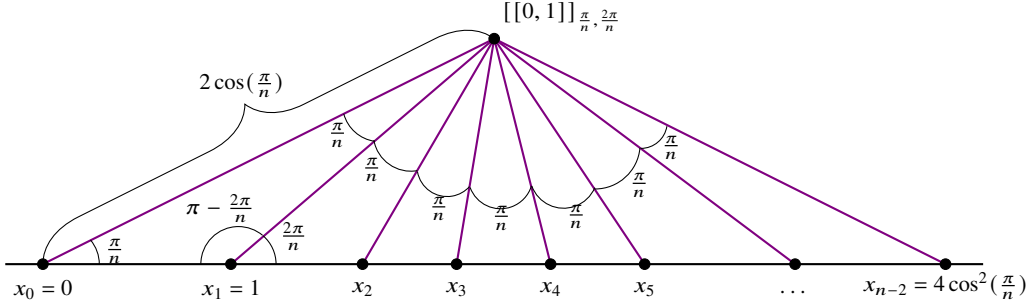
\begin{figure}[h!t]
    \centering
    \begin{tikzpicture}

\coordinate (A) at (0,0);
\coordinate (B) at (2.5,0);
\coordinate (C) at (4.25,0);
\coordinate (D) at (5.5,0);
\coordinate (E) at (6.75,0);
\coordinate (F) at (8,0);
\coordinate (G) at (10,0);
\coordinate (H) at (12,0);

\coordinate (Mid) at (6,3);
\coordinate (PreA) at (-0.5, 0);
\coordinate (PostH) at (12.5,0);
\coordinate (key) at (10,2);

\draw[black, thick] (PreA) -- (PostH);
\draw[violet, thick] (A) -- (Mid) -- (H);
\draw[violet, thick] (B) -- (Mid) -- (G);
\draw[violet, thick] (C) -- (Mid) -- (F);
\draw[violet, thick] (D) -- (Mid) -- (E);

\filldraw[black] (A) circle (2pt) node[below = 3] {$x_0 = 0$};
\filldraw[black] (B) circle (2pt) node[below = 3] {$x_1 = 1$};
\filldraw[black] (C) circle (2pt) node[below = 4] {$x_2$};
\filldraw[black] (D) circle (2pt) node[below = 4] {$x_3$};
\filldraw[black] (E) circle (2pt) node[below = 4] {$x_4$};
\filldraw[black] (F) circle (2pt) node[below = 4] {$x_5$};
\filldraw[black] (G) circle (2pt) node[below = 7pt] {\dots };
\filldraw[black] (H) circle (2pt) node[below] {$x_{n-2} = 4\cos^2(\frac{\pi}{n})$};
\filldraw[black] (Mid) circle (2pt) node[above right] {$[[0,1]]_{\frac{\pi}{n},\frac{2\pi}{n}}$};

\draw[black] (A) ++(0.75,0) arc[start angle=0,end angle=40,radius=0.5] node[midway, right] {$\frac{\pi}{n}$};
\draw[black] (B) ++(-0.4,0) arc[start angle=180,end angle=50,radius=0.5] node[above left, yshift = 2] {$\pi - \frac{2\pi}{n}$};
\draw[black] (B) ++(0.6,0) arc[start angle=0,end angle=49,radius=0.5] node[right] {\phantom{s}$\frac{2\pi}{n}$};

\draw[black] (Mid) ++(-2,-1) arc[start angle=195,end angle=262,radius=.5] node[midway, left, yshift = -1 pt] {$\frac{\pi}{n}$};
\draw[black] (4.4,1.65) arc[start angle=190,end angle=280,radius=0.5] node[midway, left, yshift = -2 pt] {$\frac{\pi}{n}$};
\draw[black] (4.98,1.23) arc[start angle=190,end angle=318,radius=0.4] node[midway, yshift = -5 pt] {$\frac{\pi}{n}$};
\draw[black] (5.68,1.02) arc[start angle=200,end angle=343,radius=0.43] node[midway, yshift = -3.5 pt] {$\frac{\pi}{n}$};
\draw[black] (6.5,1.04) arc[start angle=200,end angle=337,radius=0.45] node[xshift = -7 pt, yshift = -11 pt] {$\frac{\pi}{n}$};
\draw[black] (7.34,1) arc[start angle=270,end angle=355,radius=0.6] node[midway, right, yshift = -3 pt, xshift = -1.5] {$\frac{\pi}{n}$};
\draw[black] (7.95,1.54) arc[start angle=270,end angle=351,radius=0.35] node[midway, xshift = 5pt, yshift = -2 pt] {$\frac{\pi}{n}$};


\draw[decorate, decoration={brace, amplitude=20pt}] (A) -- (Mid) node[midway, yshift=25pt, xshift = -10pt] {$2\cos(\frac{\pi}{n})$};

\end{tikzpicture} 
    \caption{Projections of $[\![0,1]\!]_{\frac{\pi}{n}, \frac{2\pi}{n}}$ onto the real axis}
    \label{projections_tikz}
\end{figure}
Using the fact that $\sin(\pi-\frac{\pi}{n})=\sin(\frac{\pi}{n})$ along with the law of sines, we have $\frac{\sin(\frac{k\pi}{n})}{x_k}=\frac{\sin(\frac{(k+1)\pi}{n})}{2\cos(\frac{\pi}{n})}$, which yields $x_k=\frac{2\sin(\frac{k\pi}{n})\cos(\frac{\pi}{n})}{\sin(\frac{(k+1)\pi}{n})}$. This can be rewritten in terms of Chebyshev polynomials as $\frac{2\cos(\frac{\pi}{n})U_{k-1}(\cos(\frac{\pi}{n}))}{U_k(x)}=\frac{U_k(\cos(\frac{\pi}{n}))+U_{k-2}(\cos(\frac{\pi}{n}))}{U_k(\cos({\frac{\pi}{n}))}}=1+\frac{U_{k-2}(\cos(\frac{\pi}{n}))}{U_k(\cos({\frac{\pi}{n}))}}$, which was shown to be a rational function of $\cos^2(\frac{\pi}{n})$ in Lemma \ref{cheby}. Finally, define $x_{k}'$ to be the projection of $[\![0,1]\!]_{\frac{\pi(i+1)}{n}, \frac{(j+1)\pi}{n}}$ onto the real axis via the angle $\frac{(k+1)\pi}{n}$, for any integers $i,j \in \{0, \dots, n-2\}$, as shown in Figure \ref{projections_tikz3}. 
\begin{figure}[h!t]
    \centering
    \begin{tikzpicture}

\coordinate (A) at (0,0);
\coordinate (B) at (2.5,0);
\coordinate (C) at (4.25,0);
\coordinate (D) at (5.5,0);
\coordinate (E) at (6.75,0);
\coordinate (F) at (8,0);
\coordinate (G) at (10,0);
\coordinate (H) at (12,0);

\coordinate (Mid) at (6,3);
\coordinate (PreA) at (-0.5, 0);
\coordinate (PostH) at (12.5,0);
\coordinate (key) at (10,2);

\draw[black, thick] (PreA) -- (PostH);
\draw[violet, thick] (A) -- (Mid) -- (H);
\draw[violet, thick] (B) -- (Mid) -- (G);
\draw[violet, thick] (C) -- (Mid) -- (F);
\draw[violet, thick] (D) -- (Mid) -- (E);

\filldraw[black] (A) circle (2pt) node[below = 2.5] {$x_0'$};
\filldraw[black] (B) circle (2pt) node[below = 1] {$x_1'$};
\filldraw[black] (C) circle (2pt) node[below = 7] {\dots};
\filldraw[black] (D) circle (2pt) node[below = 1] {$x_i' = 0$};
\filldraw[black] (E) circle (2pt) node[below = 7] {\dots};
\filldraw[black] (F) circle (2pt) node[below = 1] {$x_j' = 1$};
\filldraw[black] (G) circle (2pt) node[below = 7pt] {\dots };
\filldraw[black] (H) circle (2pt) node[below = 1] {$x_{n-2}'$};
\filldraw[black] (Mid) circle (2pt) node[above right] {$[[0,1]]_{\frac{\pi}{n}(i+1),\frac{\pi}{n}(j+1)}$};

\draw[black] (A) ++(0.75,0) arc[start angle=0,end angle=40,radius=0.5] node[midway, right] {$\frac{\pi}{n}$};
\draw[black] (B) ++(-0.4,0) arc[start angle=180,end angle=50,radius=0.5] node[above left, yshift = 2] {$\pi - \frac{2\pi}{n}$};
\draw[black] (B) ++(0.6,0) arc[start angle=0,end angle=49,radius=0.5] node[right] {\phantom{s}$\frac{2\pi}{n}$};

\draw[black] (Mid) ++(-2,-1) arc[start angle=195,end angle=262,radius=.5] node[midway, left, yshift = -1 pt] {$\frac{\pi}{n}$};
\draw[black] (4.4,1.65) arc[start angle=190,end angle=280,radius=0.5] node[midway, left, yshift = -2 pt] {$\frac{\pi}{n}$};
\draw[black] (4.98,1.23) arc[start angle=190,end angle=318,radius=0.4] node[midway, yshift = -5 pt] {$\frac{\pi}{n}$};
\draw[black] (5.68,1.02) arc[start angle=200,end angle=343,radius=0.43] node[midway, yshift = -3.5 pt] {$\frac{\pi}{n}$};
\draw[black] (6.5,1.04) arc[start angle=200,end angle=337,radius=0.45] node[xshift = -7 pt, yshift = -11 pt] {$\frac{\pi}{n}$};
\draw[black] (7.34,1) arc[start angle=270,end angle=355,radius=0.6] node[midway, right, yshift = -3.2 pt, xshift = -1.5] {$\frac{\pi}{n}$};
\draw[black] (7.95,1.54) arc[start angle=270,end angle=351,radius=0.35] node[midway, xshift = 5pt, yshift = -2 pt] {$\frac{\pi}{n}$};



\end{tikzpicture} 
    \caption{Projections of $[\![0,1]\!]_{\frac{\pi(i+1)}{n}, \frac{(j+1)\pi}{n}}$ onto the real axis}
    \label{projections_tikz3}
\end{figure}
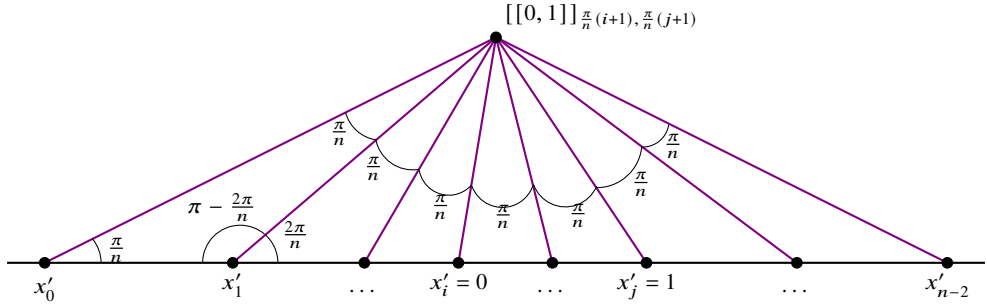

Note that the triangles in Figures \ref{projections_tikz} and \ref{projections_tikz3} are similar so $\frac{x_k'-x_i'}{x_k-x_i}=\frac{x_j'-x_i'}{x_j-x_i}$. Substituting $x_i'=0$ and $x_j'-x_i'=1$ and solving yields $x_k'=\frac{x_k-x_i}{x_j-x_i}$, which is still a rational function of $\cos^2(\frac{\pi}{n})$.
    
\end{proof}



\begin{theorem} \label{M(U)}
Let $U=\{0, \frac{\pi}{n}, \frac{2\pi}{n} \dots, \frac{(n-1)\pi}{n}\}$. Then, $M(U)=R+R(\frac{1}{2}+\frac{1}{2}\tan(\frac{\pi}{n})i)$ for some ring $R \subset \R$ where $R$ contains only rational functions of $\cos^2(\frac{\pi}{n})$ with integer coefficients.
\end{theorem}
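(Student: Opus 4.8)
The plan is to build on the Barr--Roth description in Theorem \ref{BRthm}: writing $S$ for the set of initial intersections and $R = \Z[P]$ for the ring generated by the projections, we have $M(U) = \sum_{s \in S} R s$, so the theorem reduces to collapsing this module onto the two generators $1$ and $\sigma := \tfrac{1}{2} + \tfrac{1}{2}\tan(\tfrac{\pi}{n})\,i$. I would first dispose of the description of $R$ itself: by Lemma \ref{R} every element of $P$ is a rational function of $\cos^2(\tfrac{\pi}{n})$ with integer coefficients, and since the set of all such functions is closed under sums and products and contains $\Z$, it is a ring; as $R = \Z[P]$ is the smallest ring containing $\Z$ and $P$, every element of $R$ is again a rational function of $\cos^2(\tfrac{\pi}{n})$ with integer coefficients. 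This settles the parenthetical claim and leaves only the module identity $M(U) = R + R\sigma$.

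For the inclusion $R + R\sigma \subseteq M(U)$, I would exhibit both generators as initial intersections, so that $R\cdot 1$ and $R\cdot\sigma$ occur among the summands $Rs$. Indeed $1 = [\![0,1]\!]_{0,\frac{\pi}{n}}$, since the line through $1$ at angle $\tfrac{\pi}{n}$ meets the real axis (the line through $0$ at angle $0$) precisely at the point $1$; and a direct application of the intersection formula \eqref{intformula} gives $\sigma = [\![0,1]\!]_{\frac{\pi}{n},\frac{(n-1)\pi}{n}}$. Both angles lie in $U$ and are distinct for $n \geq 3$, so $1,\sigma \in S \subseteq M(U)$, and hence $R + R\sigma \subseteq M(U)$.

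The substance is the reverse inclusion $M(U) \subseteq R + R\sigma$, for which it suffices to write each $s \in S$ as $a_s + b_s\sigma$ with $a_s, b_s \in R$. Solving $a_s + b_s\sigma = s$ by matching real and imaginary parts forces $a_s = \operatorname{Re}(s) - \operatorname{Im}(s)\cot(\tfrac{\pi}{n})$ and $b_s = 2\operatorname{Im}(s)\cot(\tfrac{\pi}{n})$. The key observation is that these coordinates are themselves projections: the elementary fact that the line through a point $q$ at angle $\alpha$ meets the real axis at $\operatorname{Re}(q) - \operatorname{Im}(q)\cot(\alpha)$ identifies $a_s$ with $[\![0,s]\!]_{0,\frac{\pi}{n}} \in P \subseteq R$, while, using $\cot(\tfrac{(n-1)\pi}{n}) = -\cot(\tfrac{\pi}{n})$, it identifies $b_s = [\![0,s]\!]_{0,\frac{(n-1)\pi}{n}} - [\![0,s]\!]_{0,\frac{\pi}{n}}$ as a difference of two projections, hence again an element of $R$. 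A one-line verification that $a_s + b_s\sigma = s$ (the $\cot\cdot\tan$ contributions cancel, returning $\operatorname{Re}(s) + \operatorname{Im}(s)i$) completes the decomposition, and summing over all $s \in S$ yields $M(U) \subseteq R + R\sigma$. Combined with the previous paragraph, this gives $M(U) = R + R\sigma$.

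I expect the main obstacle to be exactly the recognition in the third paragraph that the $\{1,\sigma\}$-coordinates of an arbitrary intersection point are expressible through the projection set $P$; once the projection formula $[\![0,q]\!]_{0,\alpha} = \operatorname{Re}(q) - \operatorname{Im}(q)\cot(\alpha)$ is in hand, the rest is bookkeeping. A secondary subtlety worth flagging is that ``consisting only of rational functions of $\cos^2(\tfrac{\pi}{n})$'' is a ring-closure property, which is what allows Lemma \ref{R} to be promoted from the generating set $P$ to the whole ring $R$.
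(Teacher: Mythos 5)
Your proposal is correct, and the heart of it is the same as the paper's: both arguments hinge on writing a point $s$ as $x'_{0}+(x'_{n-2}-x'_{0})\sigma$, where $x'_{0}=[\![0,s]\!]_{0,\frac{\pi}{n}}$ and $x'_{n-2}=[\![0,s]\!]_{0,\frac{(n-1)\pi}{n}}$ are the projections of $s$ through the extreme angles $\frac{\pi}{n}$ and $\frac{(n-1)\pi}{n}$, combined with Lemma \ref{R} to place those projections in $R$. The one organizational difference is in how the inclusion $M(U)\subseteq R+R\sigma$ is propagated: the paper argues that $R+R\sigma$ is closed under the intersection operation for arbitrary $p,q\in R+R\sigma$ (which forces it to justify the reductions ``shift so $p=0$'' and ``replace $q$ by its real projection''), whereas you decompose only the finitely many initial intersections $s\in S$ and let Theorem \ref{BRthm} do the rest via $M(U)=\sum_{s}Rs$ and $R(R+R\sigma)\subseteq R+R\sigma$. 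Your version is the tidier of the two, since it avoids those intermediate reductions entirely; the only cosmetic discrepancy is that you take $R=\Z[P]$ as in Theorem \ref{BRthm} while the paper's proof works with $\Z[P\cup P^{-1}]$, which is harmless because the statement only asks for \emph{some} ring $R$ of the required form.
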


\begin{proof}
Let $P$ be the set of projections of all initial intersections onto the real axis via angles in $U$, and let $P^{-1}$ be the set of all multiplicative inverses of elements of $P$. Define $R=\Z[P \cup P^{-1}]$. Then using results from \cite{BR}, $M(U)\cap \R=R$. By Lemma \ref{R}, $P$ contains only rational functions of $\cos^2(\frac{\pi}{n})$, and hence so does $R$. Since $R \subseteq M(U)$ and $[\![0,1]\!]_{\frac{\pi}{n}, \frac{(n-1)\pi}{n}}=\frac{1}{2}+\frac{1}{2}i\tan(\frac{\pi}{n})$ by the intersection formula, it follows that $R+R\left(\frac{1}{2}+\frac{1}{2}i\tan(\frac{\pi}{n})\right) \subseteq M(U)$. 

Conversely, to show that $M(U) \subseteq R+R\left(\frac{1}{2}+\frac{1}{2}i\tan(\frac{\pi}{n})\right)$, it suffices to show that for any $p,q \in R+R\left(\frac{1}{2}+\frac{1}{2}i\tan(\frac{\pi}{n})\right)$, and any $\alpha, \beta \in U$, $[\![p,q]\!]_{\alpha, \beta} \in R+R\left(\frac{1}{2}+\frac{1}{2}i\tan(\frac{\pi}{n})\right)$. Without loss of generality, we may shift the points so that $p=0$. Any intersection obtained this way may also be obtained by replacing $q$ with its projection onto the real line using the same angle $\beta$ so we may assume $q \in R$.  Since $\alpha,\beta \in U$, we have $\alpha=\frac{(i+1)\pi}{n}$ and $\beta=\frac{(j+1)\pi}{n}$ for some $i,j \in \{0,1, 2, \dots, n-2\}$. Now, using the fact (twice) that $[\![p,q]\!]_{\alpha,\beta}=[\![0,q-p]\!]_{\alpha,\beta}$ yields 

\begin{align*}
[\![0,q]\!]_{\alpha, \beta}&= q[\![0,1]\!]_{\alpha,\beta}=q[\![x_0',x_{n-2}']\!]_{\frac{\pi}{n},\frac{(n-1)\pi}{n}}\\
&=q\left(x_0'+[\![0,x_{n-2}'-x_0']\!]_{\frac{\pi}{n},\frac{(n-1)\pi}{n}}\right)\\
&=q\left(x_0'+(x_{n-2}'-x_0')[\![0,1]\!]_{\frac{\pi}{n},\frac{(n-1)\pi}{n}}\right)\\
&=q\left(x_0'+(x_{n-2}'-x_0')\left(\frac{1}{2}+\frac{1}{2}\tan(\alpha)i\right)\right)
\end{align*}
where $x_0',x_{n-2}' \in R$ are as defined in Lemma \ref{R}. Finally, letting $a=qx_0'$ and $b=q(x_{n-2}'-x_0')$, we obtain $a+b(\frac{1}{2}+\frac{1}{2}\tan(\frac{\pi}{n})i \in R+R(\frac{1}{2}+\frac{1}{2}i\tan(\frac{\pi}{n})$ since $a,b \in R$. Therefore, $M(U)=R+R\left(\frac{1}{2}+\frac{1}{2}i\tan(\frac{\pi}{n})\right)$.

\end{proof}

Finally, to show that reflections of order $2n$ cannot be obtained if $n$ is even, it suffices to show that $\cos(\frac{\pi}{n})+i\sin(\frac{\pi}{n}) \notin M(U)$ if $n$ is even.

\begin{lemma} \label{inR}
Let $U$ be an angle set containing $\{0, \frac{\pi}{n}, \frac{2\pi}{n}, \dots, \frac{(n-1)\pi}{n}\}$. Then, $\cos(\frac{\pi}{n})+i\sin(\frac{\pi}{n}) \in M(U)$ if and only if $2\cos(\frac{\pi}{n}) \in M(U) \cap \R$.

\end{lemma}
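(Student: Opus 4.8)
The plan is to reduce both implications to a single factorization of the point $\cos(\frac{\pi}{n})+i\sin(\frac{\pi}{n})$. Recall from the intersection formula (as used in Theorem \ref{M(U)}) that $\tau=[\![0,1]\!]_{\frac{\pi}{n},\frac{(n-1)\pi}{n}}=\frac{1}{2}+\frac{1}{2}\tan(\frac{\pi}{n})i$ belongs to $M(U)$. First I would record the elementary identity
$$\cos\left(\tfrac{\pi}{n}\right)+i\sin\left(\tfrac{\pi}{n}\right)=2\cos\left(\tfrac{\pi}{n}\right)\left(\tfrac{1}{2}+\tfrac{1}{2}\tan\left(\tfrac{\pi}{n}\right)i\right)=2\cos\left(\tfrac{\pi}{n}\right)\cdot\tau ,$$
which follows at once from $2\cos(\frac{\pi}{n})\cdot\frac{1}{2}\tan(\frac{\pi}{n})=\sin(\frac{\pi}{n})$. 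With this in hand, the biconditional becomes a question of whether the real scalar $2\cos(\frac{\pi}{n})$ lies in the base ring $R=M(U)\cap\R$.

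For the ``if'' direction, suppose $2\cos(\frac{\pi}{n})\in M(U)\cap\R$. By Theorem \ref{M(U)}, $M(U)=R+R\tau$ with $R=M(U)\cap\R$, so $M(U)$ is closed under multiplication by elements of $R$. Hence $2\cos(\frac{\pi}{n})\cdot\tau\in R\tau\subseteq M(U)$, and by the displayed identity this product is exactly $\cos(\frac{\pi}{n})+i\sin(\frac{\pi}{n})$, which therefore lies in $M(U)$.

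For the ``only if'' direction, suppose $z=\cos(\frac{\pi}{n})+i\sin(\frac{\pi}{n})\in M(U)$. The key step is to observe that $M(U)$ is closed under complex conjugation, i.e.\ under reflection across the angle $0$. This follows from Proposition \ref{rot-ref}(2) taken with $\theta=0$: the angle set is closed under $\eta\mapsto-\eta$ modulo $\pi$ (since $-\frac{k\pi}{n}\equiv\frac{(n-k)\pi}{n}$), and $\cos(0)+i\sin(0)=1\in M(U)$, so $P(U)$ contains the reflection across $0$ and $M(U)$ is invariant under it. Thus $\bar{z}\in M(U)$, and since $M(U)=R+R\tau$ is closed under addition, $z+\bar{z}=2\cos(\frac{\pi}{n})\in M(U)$; being real, it lies in $M(U)\cap\R$ as required.

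No single step is technically deep; the substantive points are the two closure properties invoked above. For the converse I must genuinely use the module structure of $M(U)$ over $R=M(U)\cap\R$ supplied by Theorem \ref{M(U)} (equivalently Theorem \ref{BRthm}), not merely additive closure. For the forward direction the only place the hypothesis on $U$ enters is in verifying that the angle set is symmetric under negation, which is immediate for $U=\{0,\frac{\pi}{n},\dots,\frac{(n-1)\pi}{n}\}$. I therefore expect the cleanest presentation is to prove and apply the lemma for this exact set, where both closure facts are already in hand, rather than for an arbitrary $U$ merely containing it; handling a strictly larger $U$ would require separately re-establishing the conjugation symmetry, which is the one place the argument could break.
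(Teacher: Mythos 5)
Your argument is correct, but it takes a genuinely different route from the paper's. The paper proves both directions by exhibiting explicit one-step origami constructions: from $z=\cos(\frac{\pi}{n})+i\sin(\frac{\pi}{n})\in M(U)$ one projects onto the real axis to get $[\![0,z]\!]_{0,\frac{(n-1)\pi}{n}}=2\cos(\frac{\pi}{n})$, and conversely $[\![0,2\cos(\frac{\pi}{n})]\!]_{\frac{\pi}{n},\frac{(n-1)\pi}{n}}=z$ (the paper writes $\frac{2\pi}{n}$ for the second angle in each intersection, but the computation comes out as claimed only with $\frac{(n-1)\pi}{n}$). That proof uses nothing beyond the three angles $0,\frac{\pi}{n},\frac{(n-1)\pi}{n}$ and closure of $M(U)$ under taking intersections, so it applies verbatim to any $U$ \emph{containing} the prescribed set, which is how the lemma is stated. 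Your route instead factors $z=2\cos(\frac{\pi}{n})\cdot\tau$ and leans on two heavier inputs: the module decomposition $M(U)=R+R\tau$ of Theorem \ref{M(U)} for the ``if'' direction, and closure of $M(U)$ under conjugation (via Proposition \ref{rot-ref} with $\theta=0$) together with additive closure for the ``only if'' direction. Both inputs are available for $U=\{0,\frac{\pi}{n},\dots,\frac{(n-1)\pi}{n}\}$, and that is the only case used downstream (Lemma \ref{evenodd} and Theorem \ref{neven}), so your proof is valid where it matters; you are right to flag that it does not automatically cover a strictly larger $U$, since Theorem \ref{M(U)} is proved only for the exact set and negation-symmetry of $U$ can fail. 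What your approach buys is a conceptual explanation of why the two points are interchangeable (they differ by the generator $\tau$ of the $R$-module $M(U)$); what the paper's buys is elementarity and the full generality of the statement as written.
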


\begin{proof}
First suppose $\cos\left(\frac{\pi}{n}\right)+i\sin\left(\frac{\pi}{n}\right)\}) \in M(U)$. Then $[\![0,\cos\left(\frac{\pi}{n}\right)+i\sin\left(\frac{\pi}{n} \right)]\!]_{0,\frac{2\pi}{n}}=2\cos\left(\frac{\pi}{n}\right)$ by the intersection formula, so $2\cos\left(\frac{\pi}{n}\right) \in M(U) \cap \R$. 

Conversely, if $2\cos\left(\frac{\pi}{n}\right) \in M(U) \cap \R$, then $[\![0,2\cos\left(\frac{\pi}{n}\right)]\!]_{\frac{\pi}{n},\frac{2\pi}{n}}=\cos\left(\frac{\pi}{n}\right)+i\sin\left(\frac{\pi}{n}\right)$ which then must be in $M(U)$.

\end{proof}

\begin{lemma} \label{evenodd}
Let $U=\{0,\frac{\pi}{n}, \frac{2\pi}{n}, \dots, \frac{(n-1)\pi}{n}\}$. Then, $2\cos(\frac{\pi}{n}) \in M(U)$ if and only if $n$ is odd.
\end{lemma}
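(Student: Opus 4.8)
The plan is to reduce the question to membership of a single real number in the base ring $R$, and then to settle the two directions by genuinely different tools: an explicit intersection for the odd case, and cyclotomic field theory for the even case. Throughout I would assume $n\ge 3$, since for $n\le 2$ the set $U$ has at most three angles (outside the dense regime treated here) and the statement degenerates.

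First I would observe that $2\cos(\frac{\pi}{n})$ is real, so by Theorem \ref{M(U)} we have $2\cos(\frac{\pi}{n})\in M(U)$ if and only if $2\cos(\frac{\pi}{n})\in M(U)\cap\R=R$, where $R$ consists of rational functions of $\cos^2(\frac{\pi}{n})$ with integer coefficients (Lemma \ref{R}). In particular $R$ is contained in the field $\Q(\cos^2(\frac{\pi}{n}))$. Writing $c=\cos(\frac{\pi}{n})$, the whole lemma comes down to deciding when $2c$ lies in $R$, and the containment $R\subseteq\Q(c^2)$ is exactly what will rule out the even case.

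For the forward direction, suppose $n$ is odd. Then $\frac{n+1}{2}$ is an integer, so $\frac{(n+1)\pi}{2n}$ is an allowed angle in $U$, and the intersection computation already carried out in the proof of Theorem \ref{nodd} gives $[\![0,1]\!]_{\frac{\pi}{n},\frac{(n+1)\pi}{2n}}=\cos(\frac{\pi}{n})+i\sin(\frac{\pi}{n})\in M(U)$. Applying Lemma \ref{inR} then yields $2\cos(\frac{\pi}{n})\in M(U)\cap\R$, as desired. Note that for this direction it is essential to produce $2c$ as an actual point of $M(U)$ rather than merely an element of the field $\Q(c^2)$, which is why the constructive argument through Theorem \ref{nodd} and Lemma \ref{inR} is used here.

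For the reverse direction I would argue contrapositively: assuming $2c\in R$, I show $n$ must be odd. Since $R\subseteq\Q(c^2)$, we would have $c\in\Q(c^2)$, i.e.\ $\Q(c)=\Q(c^2)$. Using $\cos^2(\frac{\pi}{n})=\tfrac12(1+\cos(\frac{2\pi}{n}))$ I identify $\Q(c^2)=\Q(\cos(\frac{2\pi}{n}))$, the maximal real subfield of the cyclotomic field $\Q(\zeta_n)$, of degree $\varphi(n)/2$ over $\Q$; likewise $\Q(c)=\Q(\cos(\frac{\pi}{n}))$ is the maximal real subfield of $\Q(\zeta_{2n})$, of degree $\varphi(2n)/2$. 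Hence $[\Q(c):\Q(c^2)]=\varphi(2n)/\varphi(n)$, which equals $1$ when $n$ is odd but equals $2$ when $n$ is even (since $\varphi(2n)=2\varphi(n)$ for even $n$). Thus for even $n$ we get $c\notin\Q(c^2)$, so $2c\notin R$ and $2\cos(\frac{\pi}{n})\notin M(U)$. The main obstacle is precisely this even-$n$ half: it hinges on the fact that the ring $R$ cannot escape the field $\Q(\cos^2(\frac{\pi}{n}))$ — guaranteed by Theorem \ref{M(U)} with Lemma \ref{R} — together with the standard degree formula $[\Q(\cos(\frac{2\pi}{m})):\Q]=\varphi(m)/2$ for $m\ge 3$, which is what converts ring-theoretic nonmembership into the clean parity dichotomy for $n$.
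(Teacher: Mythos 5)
Your proof is correct and follows the same overall skeleton as the paper's: both reduce the question to membership of $2\cos(\frac{\pi}{n})$ in the real base ring $R$, both handle the odd case identically (the explicit intersection from Theorem \ref{nodd} plus Lemma \ref{inR}), and both ultimately rest on the Watkins--Zeitlin degree formula for $\cos(\frac{2\pi}{m})$. Where you genuinely diverge is the even case. The paper works directly with polynomials: it identifies the minimal polynomial of $\cos(\frac{\pi}{n})$ as $f(2x^2-1)$ (where $f$ is the minimal polynomial of $\cos(\frac{2\pi}{n})$), observes it has only even powers, and then argues that $2\cos(\frac{\pi}{n})=g/h$ with $g,h$ even-powered would produce an annihilating polynomial with odd powers --- a step the paper phrases somewhat loosely as yielding ``another minimal polynomial,'' when what is really needed is a divisibility argument against $f(2x^2-1)$. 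Your version repackages the same fact as a field-degree computation: $R\subseteq\Q(\cos^2(\frac{\pi}{n}))=\Q(\cos(\frac{2\pi}{n}))$, and $[\Q(\cos(\frac{\pi}{n})):\Q(\cos(\frac{2\pi}{n}))]=\varphi(2n)/\varphi(n)=2$ for even $n$, so $\cos(\frac{\pi}{n})$ simply cannot lie in the field containing $R$. This is tighter: it sidesteps the parity-of-powers bookkeeping and the informal final step entirely, and it makes transparent why the dichotomy is exactly parity of $n$ (via $\varphi(2n)/\varphi(n)$). The price is that you invoke slightly more machinery (identification of the maximal real subfields and multiplicativity of degrees), whereas the paper's argument, once patched, stays at the level of explicit polynomials. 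Both depend on the prior structural results (Lemma \ref{R} and Theorem \ref{M(U)}) to confine $R$ to rational functions of $\cos^2(\frac{\pi}{n})$, and your explicit note that $M(U)\cap\R=R$ is doing real work there is a point the paper leaves implicit.
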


\begin{proof}
If $n$ is odd, then $\cos(\frac{\pi}{n})+i\sin(\frac{\pi}{n}) \in M(U)$ as shown in Theorem \ref{nodd}. Then, by Lemma \ref{inR}, $2\cos(\frac{\pi}{n}) \in R$. 

Conversely, suppose that $n$ is even. Using results from \cite{WZ}, the minimal polynomial of $\cos(\frac{2\pi}{n})$ over $\mathbb{Q}$ has degree $k=\frac{\phi(n)}{2}$, where $\phi(n)$ is the number of positive integers less than $n$ that are relatively prime to $n$. Similarly, the minimal polynomial of $\cos(\frac{\pi}{n})=\cos(\frac{2\pi}{2n})$ has degree $\frac{\phi(2n)}{2}=\frac{2\phi(n)}{2}=\phi(n)=2k$ is even since $n>2$. Suppose that $f(x)$ is the minimal polynomial of $\cos(\frac{2\pi}{n})$, which has degree $k$ by \cite{WZ}. Then using the fact that $\cos(\frac{2\pi}{n})=2\cos^2(\frac{\pi}{n})-1$, it must be true that $\cos(\frac{\pi}{n})$ satisfies the polynomial $f(2x^2-1)$ which has degree $2k$, and contains only even powers of $x$. Since the minimal polynomial of $\cos\left(\frac{\pi}{n}\right)$ has degree $2k$, it must be true that its minimal polynomial is $f(2x^2-1)$ up to scaling. 

If $2\cos\left(\frac{\pi}{n}\right) \in R$ then we would have $2\cos(\frac{\pi}{n})=\frac{g(\cos(\frac{\pi}{n}))}{h(\frac{\cos(\pi}{n}))}$ where $g(x)$ and $h(x)$ contain only even powers of $x$ and both have degree at most $2k$ by Lemma \ref{M(U)}. We would then have $2\cos(\frac{\pi}{n})h(\cos(\frac{\pi}{n}))-g(\cos(\frac{\pi}{n}))=0$ which would mean there exists another minimal polynomial of $\cos(\frac{\pi}{n})$ containing some odd powers of $\cos(\frac{\pi}{n})$ which is a contradiction. 

    
\end{proof}

\begin{theorem} \label{neven}
    If $n \neq 2$ is even and $U=\{0,\frac{\pi}{n}, \frac{2\pi}{n}, \dots, \frac{(n-1)\pi}{n}\}$, then $P(U)\cong D_{n}$, the dihedral group of a regular $n$-gon.
\end{theorem}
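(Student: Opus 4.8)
The plan is to reduce the classification to two facts about $P(U)$: the highest rotation order, and whether any reflection is present. Once these are known, Theorem \ref{mainthm2} delivers the isomorphism type immediately. Since $U$ consists of the multiples of $\frac{\pi}{n}$ modulo $\pi$, the angle-closure half of Proposition \ref{rot-ref} is automatic for any rotation by an integer multiple of $\frac{\pi}{n}$ and for any reflection across such an axis; moreover no other rotation angle can preserve $U$. So everything reduces to deciding, for the relevant unit-circle points, whether they lie in $M(U)$. This mirrors the structure of the proof of Theorem \ref{nodd}, but with the parity of $n$ reversed, which is exactly what lowers the rotation order from $2n$ to $n$.

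First I would show that rotation by $\frac{2\pi}{n}$ is present. Paralleling the computation in Theorem \ref{nodd}, the intersection formula gives
\[
[\![0,1]\!]_{\frac{2\pi}{n},\,\frac{\pi}{2}+\frac{\pi}{n}} = \cos\!\left(\tfrac{2\pi}{n}\right) + i\sin\!\left(\tfrac{2\pi}{n}\right),
\]
which follows because the chord of the unit circle joining $1$ to $e^{2\pi i/n}$ has direction angle $\frac{\pi}{2}+\frac{\pi}{n}$ (using $\cos\frac{2\pi}{n}-1 = -2\sin^2\frac{\pi}{n}$ and $\sin\frac{2\pi}{n} = 2\sin\frac{\pi}{n}\cos\frac{\pi}{n}$). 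Because $n$ is even, $\frac{\pi}{2}+\frac{\pi}{n} = \frac{(n/2+1)\pi}{n}$ is an integer multiple of $\frac{\pi}{n}$ lying in $\{1,\dots,n-1\}\cdot\frac{\pi}{n}$ (here $n\ge 4$ is used for the range), hence in $U$; this is precisely the integrality condition that fails for odd $n$ and succeeds for even $n$. Thus $\cos(\frac{2\pi}{n})+i\sin(\frac{2\pi}{n}) \in M(U)$, and Proposition \ref{rot-ref} places the rotation by $\frac{2\pi}{n}$ in $P(U)$. I would then show rotation by $\frac{\pi}{n}$ is absent: combining Lemma \ref{inR} with Lemma \ref{evenodd}, when $n$ is even we have $2\cos(\frac{\pi}{n}) \notin M(U)\cap\R$, so $\cos(\frac{\pi}{n})+i\sin(\frac{\pi}{n}) \notin M(U)$, and Proposition \ref{rot-ref} forbids the rotation by $\frac{\pi}{n}$.

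These two facts pin the rotation subgroup down exactly. The residues $k$ modulo $2n$ for which rotation by $\frac{k\pi}{n}$ lies in $P(U)$ form a subgroup $H \le \Z/2n\Z$ with $2 \in H$ and $1 \notin H$; since $\gcd(2,k)=1$ for odd $k$, any odd element of $H$ would force $1\in H$, a contradiction, so $H=\langle 2\rangle$ is the subgroup of even residues and the highest rotation order is exactly $n$. For reflections I would take the axis $\theta=\frac{\pi}{n}$: then $\cos(2\theta)+i\sin(2\theta)=\cos\frac{2\pi}{n}+i\sin\frac{2\pi}{n}$ was already shown to be in $M(U)$, and $2\theta-\alpha=\frac{2\pi}{n}-\alpha$ is a multiple of $\frac{\pi}{n}$ for every $\alpha\in U$, so Proposition \ref{rot-ref} gives a reflection in $P(U)$. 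Finally, since $n\neq 2$ is even we have $n\ge 4>2$, so Theorem \ref{mainthm2}(1), with highest rotation order $n$ and a reflection present, yields $P(U)\cong D_n$. The main obstacle is the explicit intersection computation exhibiting $e^{2\pi i/n}\in M(U)$: identifying the correct second angle $\frac{\pi}{2}+\frac{\pi}{n}$ and verifying it lies in $U$ exactly when $n$ is even is the crux, as this parity condition is what distinguishes this theorem from Theorem \ref{nodd}.
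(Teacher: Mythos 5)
Your proposal is correct and follows essentially the same route as the paper: exhibit $e^{2\pi i/n}$ as an explicit intersection to obtain the order-$n$ rotation, rule out the order-$2n$ rotation via Lemmas \ref{inR} and \ref{evenodd}, produce a reflection across $\frac{\pi}{n}$, and invoke Theorem \ref{mainthm2}. Your second angle $\frac{\pi}{2}+\frac{\pi}{n}=\frac{\pi}{n}\left(\frac{n}{2}+1\right)$ is in fact the correct one (the paper writes $\frac{\pi}{n}\left(\frac{n}{2}-2\right)$, whose line through $1$ does not pass through $e^{2\pi i/n}$), and your subgroup argument that the admissible rotations are exactly the even residues modulo $2n$ makes explicit a step the paper leaves implicit.
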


\begin{proof} 
Suppose $n \neq 2$ is even and $U=\{0,\frac{\pi}{n}, \frac{2\pi}{n}, \dots, \frac{(n-1)\pi}{n}\}$. Since $U$ contains $n$ angles, the highest rotation order must be at most $2n$. Let $\theta=\frac{\pi}{n}$. By the intersection formula, $[\![0,1]\!]_{2\theta, \frac{\pi}{n}\left(\frac{n}{2}-2\right)}=\cos\left(\frac{2\pi}{n}\right)+i\sin\left(\frac{2\pi}{n}\right)$. 
Therefore, by Proposition \ref{rot-ref}, Since $U$ contains $\cos(2\theta)+i\sin(2\theta) \in M(U)$, and for every $\alpha \in U$, $\alpha+2\theta \in U$, $P(U)$ contains rotations by $2\theta$ which have order $\frac{2\pi}{\theta}=n$. By Lemma \ref{evenodd}, since $n$ is even, $2\cos\left(\frac{\pi}{n}\right) \notin M(U)$. Then, by Lemma \ref{inR}, $\cos\left(\frac{\pi}{n}\right)+i\sin\left(\frac{\pi}{n}\right) \notin M(U)$, so by Proposition \ref{rot-ref}, $P(U)$ does not contain rotations by $\frac{\pi}{n}$ which would have order $2n$. Therefore, $n$ is the highest rotation order of $P(U)$.

To show that $P(U)$ contains reflections across the angle $\theta$, it has already been noted that $\cos(2\theta)+i\sin(2\theta) \in M(U)$. It is also true by the definition of $U$ that for every $\alpha \in U$, $2\theta-\alpha \in U$ since $U$ consists of all multiples of $\theta$. Therefore, by Proposition \ref{rot-ref}, $P(U)$ contains a reflection across the angle $\frac{\pi}{n}$ and hence contains reflections.



Since $P(U)$ contains a rotation by an angle of $\frac{\pi}{n}$ and a reflections, all other symmetries in $D_n$ can be written as a composition of those so $P(U)$ is isomorphic to $D_n$ by Theorem \ref{mainthm2}.
\end{proof}

\begin{example} \label{p4m}
    Let $U=\{0, \frac{\pi}{4}, \frac{\pi}{2}, \frac{3\pi}{4}\}$. Using techniques from \cite{BR}, the origami set $M(U)$ is given by $M(U)=R+R i$ where $R=\Z[\frac{1}{2}]$. The group $P(U)$ contains rotations of order 4 as well as four reflection axes: $\frac{\alpha}{8}$, $\frac{3\alpha}{8}$, $\frac{5\alpha}{8}$, and $\frac{7\alpha}{8}$. Thus, $P(U)$ contains the following characteristics: rotations of order 4 and reflection axes of $\frac{\pi}{4}$ radians.

\end{example}

It has now been shown that the point group of an origami construction is either isomorphic to $D_n$ or $\Z/n\Z$ for an even integer $n$. In fact, it is true that every such group is the point group of the origami construction for some set of angles. We conclude with a theorem summarizing this.

\begin{theorem} \label{final}
Let $G$ be a finite group. Then, $G\cong P(U)$ for some angle set $U$ containing at least 3 angles if and only if $G$ is isomorphic to one of the following groups:
\begin{enumerate}
    \item $\Z/2\Z \times \Z/2\Z$
    \item $\Z/n\Z$
    \item $D_n$
\end{enumerate}
for an even integer $n$.
\end{theorem}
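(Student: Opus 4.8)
The statement has two directions. The forward direction (every $P(U)$ is one of the listed groups) is essentially a corollary of the earlier structural results, so I would dispatch it first. By Theorem~\ref{mainthm2}, the point group $P(U)$ is determined by its highest rotation order $n$ together with the presence or absence of reflections: it is cyclic $\Z/n\Z$ when there are no reflections, dihedral $D_n$ when $n>2$ and reflections are present, and $\Z/2\Z \times \Z/2\Z$ when $n=2$ and reflections are present. Thus any $P(U)$ already lies in the listed families; the only content to extract is the claim that $n$ must be \emph{even}. For this I would invoke the fact, established in Lemma~\ref{equi} and used throughout Section~5, that every origami structure contains a rotation of order $2$ (negation), since $0$ and $\pi$ are always among the allowed directions and $M(U)$ is closed under negation. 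Therefore the cyclic subgroup of rotations has even order, forcing $n$ to be even in each case.

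The reverse direction (every listed group is realized) is where the real work lies: I must exhibit, for each group and each even $n$, an explicit angle set $U$ whose point group is that group. Here I would lean directly on the worked constructions already in the paper. For $D_n$ with $n$ even, Theorem~\ref{neven} gives $U=\{0,\frac{\pi}{n},\dots,\frac{(n-1)\pi}{n}\}$ with $P(U)\cong D_n$. For the dihedral groups of \emph{even} order $2m$ that arise as $D_{2m}$ from Theorem~\ref{nodd}, I would take $m$ odd and use $U=\{0,\frac{\pi}{m},\dots,\frac{(m-1)\pi}{m}\}$, which yields $P(U)\cong D_{2m}$; combining the two theorems shows every $D_n$ with $n$ even is realized. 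The group $\Z/2\Z\times\Z/2\Z$ is realized by any isosceles (non-equilateral) three-angle set, e.g. the $cmm$ construction of Example~\ref{cmm} whose point group is exactly $\Z/2\Z\times\Z/2\Z$.

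The genuinely delicate case is the cyclic family $\Z/n\Z$ with $n$ even and no reflections, since all the explicit constructions above are reflection-rich. The plan is to start from a reflective angle set realizing rotation order $n$ and then \emph{break} the reflective symmetry while preserving the rotational symmetry. Concretely, I would begin with the rotation generator and add a single non-symmetric direction: take $U$ to be the orbit of $\{0,\varphi\}$ under rotation by $\frac{2\pi}{n}$ for a generic angle $\varphi$ not equal to any half-multiple of $\frac{2\pi}{n}$, so that $U$ is closed under rotation by $\frac{2\pi}{n}$ but under no reflection. By Proposition~\ref{rot-ref} the rotation is present provided the corresponding point $\cos(\frac{2\pi}{n})+i\sin(\frac{2\pi}{n})$ lies in $M(U)$, which it does because the symmetric sub-pattern already produces it; and the reflection criterion fails for every candidate axis because $U$ is not closed under any $\alpha\mapsto 2\theta-\alpha$. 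I expect the main obstacle to be verifying both halves of the reflection criterion simultaneously—ensuring that for the chosen $\varphi$ no axis $\theta$ accidentally satisfies $2\theta-\alpha\in U$ for all $\alpha$—which requires a careful genericity argument on $\varphi$ together with the observation that adding a direction can only enlarge $M(U)$ and hence cannot destroy the rotation. Once the cyclic case is secured, assembling the four realizations completes the reverse direction and the theorem.
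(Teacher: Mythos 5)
Your forward direction and your realizations of $\Z/2\Z\times\Z/2\Z$ (via the $cmm$ example) and $D_n$ (via Theorems \ref{nodd} and \ref{neven}) match the paper, and your observation that $n$ must be even because the rotation subgroup is cyclic and always contains the order-$2$ rotation by $\pi$ is actually a cleaner justification than the paper gives. The gap is in the cyclic case, and it is twofold. First, the angle set you propose --- the orbit of $\{0,\varphi\}$ under rotation by $\frac{2\pi}{n}$ --- is \emph{automatically} closed under the reflection $\alpha\mapsto \varphi-\alpha$ (axis $\frac{\varphi}{2}$), since that map swaps the orbit of $0$ with the orbit of $\varphi$ and negates the rotation index; no genericity condition on $\varphi$ can prevent this. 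So the angle-set half of the reflection criterion in Proposition \ref{rot-ref} is satisfied for this axis, and to exclude the reflection you would be forced to prove $\cos(\varphi)+i\sin(\varphi)\notin M(U)$, an arithmetic statement about the module $M(U)$ rather than a statement about angles. Second, your claim that $\cos(\frac{2\pi}{n})+i\sin(\frac{2\pi}{n})\in M(U)$ ``because the symmetric sub-pattern already produces it'' fails when $4\mid n$: the orbit of $0$ is the set $\{0,\frac{\pi}{m},\dots,\frac{(m-1)\pi}{m}\}$ with $m=\frac{n}{2}$, and by Lemmas \ref{inR} and \ref{evenodd} this set does \emph{not} produce $\cos(\frac{\pi}{m})+i\sin(\frac{\pi}{m})$ when $m$ is even.

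The paper's construction repairs both defects at once: it takes $U$ to contain \emph{all} $n$ multiples of $\frac{\pi}{n}$ together with $\alpha+\frac{k\pi}{n}$ for even $k$ only, where $0<\alpha<\frac{\pi}{2n}$. Keeping all multiples of $\frac{\pi}{n}$ makes $\cos(\frac{2\pi}{n})+i\sin(\frac{2\pi}{n})$ an explicit initial intersection, $[\![0,1]\!]_{\frac{2\pi}{n},\frac{\pi}{n}(\frac{n}{2}-2)}$, so the rotation of order $n$ is guaranteed for every even $n$; and decorating only the even multiples with the offset $\alpha$ breaks every candidate reflection already at the level of the angle set, since an axis $\theta$ with $2\theta$ a multiple of $\frac{\pi}{n}$ sends $\alpha$ to $-\alpha$ plus a multiple of $\frac{\pi}{n}$, which is not in $U$, while an axis with $2\theta=\alpha+\frac{2k\pi}{n}$ sends the odd multiple $\frac{\pi}{n}$ to an $\alpha$-shifted odd multiple, which is also not in $U$. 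You should replace your orbit construction with this one; the rest of your outline then assembles correctly.
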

\begin{proof}
    It has already been shown in Theorems \ref{nodd}  and \ref{neven} that if $U$ contains more than three angles then $P(U)$ is isomorphic to one of $\Z/2\Z \times \Z/2\Z$, $\Z/n\Z$, or $D_n$ for some even integer $n$. It has also already been shown that if $U$ contains exactly three angles then $G(U)$ is isomorphic to the wallpaper group corresponding to either $p2$ so that $P(U)\cong \Z/2\Z$, $cmm$ so that $P(U) \cong \Z/2\Z \times \Z/2\Z$, or $p6m$ so that $P(U) \cong D_6$ by Theorem \ref{mainthm}. It remains to show the converse.
    
    Case 1: $G \cong \Z/2\Z \times \Z/2\Z$: Let $U=\{0,\frac{\pi}{4}, \frac{\pi}{2}\}$ so $G(U)$ is the wallpaper group $cmm$ whose point group $P(U)$ is isomorphic to $\Z/2\Z \times \Z/2\Z$.

   Case 2: $G\ \cong \Z/n\Z$ for some even integer $n$: Let $\alpha$ be an angle such that $0<\alpha < \frac{\pi}{2n}$. If $U=\{0, \alpha, \frac{\pi}{n}, \frac{2\pi}{n}, \frac{2\pi}{n}+\alpha, \dots, \frac{(n-1)\pi}{n}\}$ (where $\alpha+\frac{k\pi}{n} \in U $ if and only if $k$ is even) then $P(U) \cong \Z/n\Z$. This can be seen as follows: note that for every $\theta \in U$, $\theta+\frac{2\pi}{n} \in U$, $\frac{2\pi}{n}$ is the smallest angle for which this is true. Moreover, since $[\![0,1]\!]_{\frac{2\pi}{n}, \frac{\pi}{n}\left(\frac{n}{2}-2\right)}=\cos(\frac{2\pi}{n})+i\sin(\frac{2\pi}{n})$ so by Theorem \ref{mainthm2}, the highest rotation order is $2\pi/(\frac{2\pi}{n})=n$. 
    Note that $P(U)$ cannot contain reflections since there would have to be a reflection axis $\theta$ such that $0 \leq \theta < \frac{\pi}{n}$ so $0 \leq 2\theta < \frac{2\pi}{n}$. This cannot be true since that would mean that $2\theta \in U$ meaning that $2\theta=0$, $2\theta=\alpha$, or $2\theta=\frac{\pi}{n}$. If $2\theta=0$ then $2\theta-\alpha=-\alpha$ which is not in $U$. If $2\theta=\alpha$ then $2\theta-\frac{\pi}{2}=\alpha-\frac{\pi}{2}$ which again is also not in $U$. If $2\theta=\frac{\pi}{n}$ then $2\theta-\alpha=\frac{\pi}{n}-\alpha$ which is again not in $U$. Therefore, by Proposition \ref{rot-ref}, $P(U)$ does not contain any reflections so we must have $P(U) \cong \Z/n\Z$.
    
    Case 3: $G \cong D_n$ for some even integer $n$: By Theorem \ref{neven}, if $U=\{0, \frac{\pi}{n}, \frac{2\pi}{n}, \dots, \frac{(n-1)\pi}{n}\}$ then $P(U) \cong D_n$.
\end{proof}

This final theorem completely classifies all of the point groups for origami structures and relates them to three groups that are commonly found in algebra. The following example illustrates how Theorem \ref{final} can be applied to determine the characteristics of a structure and its isomorphism with the aforementioned minimum of 3 angles. 

\begin{example} \label{p6}
    Let $U=\{0, \frac{\pi}{4}, \frac{\pi}{3}, \frac{7\pi}{12}, \frac{2\pi}{3}, \frac{11\pi}{12}\}$. Using techniques from \cite{BR}, $M(U)=R+R \tan(\alpha)i$ where $R=\Z[\sqrt[3]{4}, \frac{1}{\sqrt[3]{4}}, \frac{1}{1+\sqrt[3]{4}}]$. Note the use of the fact that $\cos^2(\alpha)=\frac{1}{1+\tan^2(\alpha)}$. The group $P(U)$ contains rotations of order 6 but no reflection axes so $P(U) \cong \Z/6\Z$.
\end{example}






\bigskip



\end{document}